\title{\vspace{-1.5cm}The parabolic exotic t-structure}
\author{\vspace{0cm} Pramod N. Achar, Nicholas Cooney and Simon Riche}
\institution{Department of Mathematics,
  Louisiana State University,
  Baton Rouge, LA 70803,
  U.S.A.}\\
\email{pramod@math.lsu.edu}}\\
\institution{Universit\'e Clermont Auvergne, CNRS, LMBP, F-63000 Clermont-Ferrand, France \\ Max Planck Institute for Mathematics, Bonn, Germany (present affiliation)}\\
\email{ncooney@mpim-bonn.mpg.de}}\\
\institution{Universit\'e Clermont Auvergne, CNRS, LMBP, F-63000 Clermont-Ferrand, France}\\
\email{simon.riche@uca.fr}}
\date{\vspace{-5ex}} 
\journal{\'Epijournal de G\'eom\'etrie Alg\'ebrique} 
\numberwithin{equation}{numsection}
\renewcommand{\theequation}{\arabic{section}.\arabic{equation}}
\newtheorem{thm}{Theorem}[section]
\newtheorem{lem}[thm]{Lemma}
\newtheorem{cor}[thm]{Corollary}
\newtheorem{rmk}[thm]{Remark}
\numberwithin{figure}{section}
\numberwithin{table}{section}
\let\c@table\c@figure
\newtheorem{lemA}{Lemma}
\newtheorem{propA}[lemA]{Proposition}
\newtheorem{rmkA}[lemA]{Proposition}
\newcommand{\bk}{\Bbbk}
\newcommand{\Z}{\mathbb{Z}}
\newcommand{\C}{\mathbb{C}}
\newcommand{\Ql}{\mathbb{Q}_\ell}
\newcommand{\F}{\mathbb{F}}
\newcommand{\Gm}{\mathbb{G}_{\mathrm{m}}}
\newcommand{\fn}{\mathfrak{n}}
\newcommand{\fR}{\mathfrak{R}}
\newcommand{\weyl}{\mathsf{M}}
\newcommand{\coweyl}{\mathsf{N}}
\newcommand{\tilt}{\mathsf{T}}
\newcommand{\irr}{\mathsf{L}}
\newcommand{\tcN}{{\widetilde{\mathcal{N}}}}
\newcommand{\se}{\mathsf{e}}
\newcommand{\bX}{\mathbf{X}}
\newcommand{\bXpp}{\bX^{+,\mathrm{reg}}}
\newcommand{\Waff}{W_{\mathrm{aff}}}
\newcommand{\WaffCox}{W_{\mathrm{aff}}^{\mathrm{Cox}}}
\newcommand{\Waffmin}{{}^0 \hspace{-1pt} \Waff}
\newcommand{\WaffminCox}{{}^0 \hspace{-1pt} \WaffCox}
\newcommand{\WaffminI}{{}^0 \hspace{-1pt} \Waff^I}
\newcommand{\cF}{\mathcal{F}}
\newcommand{\cE}{\mathcal{E}}
\newcommand{\cG}{\mathcal{G}}
\newcommand{\cH}{\mathcal{H}}
\newcommand{\cO}{\mathcal{O}}
\newcommand{\cP}{\mathcal{P}}
\newcommand{\bQ}{\mathbf{Q}}
\newcommand{\cJ}{\mathcal{J}}
\newcommand{\cT}{\mathcal{T}}
\newcommand{\Gr}{\mathrm{Gr}}
\newcommand{\Fl}{\mathrm{Fl}}
\newcommand{\sph}{{\mathrm{sph}}}
\newcommand{\Sat}{\mathcal{S}}
\newcommand{\Iw}{\mathrm{Iw}}
\newcommand{\mix}{{\mathrm{mix}}}
\newcommand{\Perv}{\mathsf{Perv}}
\newcommand{\Par}{\mathsf{Parity}}
\newcommand{\Rep}{\mathsf{Rep}}
\newcommand{\Coh}{\mathsf{Coh}}
\newcommand{\ExCoh}{\mathsf{ExCoh}}
\newcommand{\grRep}{\mathsf{grRep}}
\newcommand{\Db}{D^{\mathrm{b}}}
\newcommand{\Kb}{K^{\mathrm{b}}}
\newcommand{\Dmix}{D^\mix}
\DeclareMathOperator{\Hom}{Hom}
\DeclareMathOperator{\Ext}{Ext}
\DeclareMathOperator{\End}{End}
\DeclareMathOperator{\For}{For}
\newcommand{\id}{\mathrm{id}}
\newcommand{\simto}{\xrightarrow{\sim}}
\newcommand{\la}{\langle}
\newcommand{\ra}{\rangle}
\newcommand{\TilRT}{\mathsf{T}^{\mathrm{RT}}}
\newcommand{\IW}{\mathcal{IW}}
\newcommand{\Av}{\mathsf{Av}}
\newcommand{\Whit}{\mathrm{Wh}}
\newcommand{\Tilt}{\mathsf{Tilt}}
\newcommand{\GKM}{\mathscr{G}}
\newcommand{\BKM}{\mathscr{B}}
\newcommand{\PKM}{\mathscr{P}}
\newcommand{\UKM}{\mathscr{U}}
\newcommand{\LKM}{\mathscr{L}}
\newcommand{\Flag}{\mathscr{X}}
\newcommand{\AS}{\mathcal{L}_{\mathrm{AS}}}
\newcommand{\KW}{{}^K \hspace{-1pt} W}
\newcommand{\KD}{{}^K \hspace{-1pt} \Delta}
\newcommand{\KN}{{}^K \hspace{-1pt} \nabla}
\def\lotimes{\@ifnextchar_{\@lotimessub}{\@lotimesnosub}}
\def\@lotimessub_#1{\mathchoice{\mathbin{\mathop{\otimes}^L}_{#1}}%
  {\otimes^L_{#1}}{\otimes^L_{#1}}{\otimes^L_{#1}}}
\def\@lotimesnosub{\mathbin{\mathop{\otimes}^L}}
\newcommand{\sslash}{\,\,\slash\!\!\slash\,\,}
\begin{document}


\maketitle



\begin{prelims}

\vspace{-0.45cm}

\def\abstractname{Abstract}
\abstract{Let $G$ be a connected reductive algebraic group over an algebraically closed field $\Bbbk$, with simply connected derived subgroup. The \emph{exotic t-structure} on the cotangent bundle of its flag variety $T^*(G/B)$, originally introduced by Bezrukavnikov, has been a key tool for a number of major results in geometric representation theory, including the proof of the graded Finkelberg--Mirkovi\'c conjecture.
In this paper, we study (under mild technical assumptions) an analogous t-structure on the cotangent bundle of a partial flag variety $T^*(G/P)$.  As an application, we prove a parabolic analogue of the Arkhipov--Bezrukavnikov--Ginzburg equivalence.  When the characteristic of $\bk$ is larger than the Coxeter number, we deduce an analogue of the graded Finkelberg--Mirkovi\'c conjecture for some singular blocks.}

\keywords{Flag varieties; derived category of coherent sheaves; parity complexes; t-structure; exceptional collection; highest weight categories}

\MSCclass{14M15 (primary); 20G05; 20G44 (secondary)}


\languagesection{Fran\c{c}ais}{%

\vspace{-0.15cm}
\textbf{Titre. La t-structure exotique parabolique.} Soit $G$ un groupe alg\'ebrique r\'eductif connexe sur un corps $\Bbbk$ alg\'ebriquement clos. La \emph{t-structure exotique} sur le fibr\'e cotangent de sa vari\'et\'e de drapeaux $T^*(G/B)$, introduite \`a l'origine par Bezrukavnikov, a \'et\'e un outil cl\'e pour de nombreux r\'esultats majeurs en th\'eorie g\'eom\'etrique des repr\'esentations, en particulier
la d\'emonstration de la conjecture de Finkelberg--Mirkovi\'c gradu\'ee.
Dans cet article, nous \'etudions (sous de l\'eg\`eres hypoth\`eses techniques) une t-structure analogue sur le fibr\'e cotangent de la vari\'et\'e de drapeaux partiels $T^*(G/P)$. Comme application, nous prouvons un analogue parabolique de l'\'equivalence de Arkhipov--Bezrukavnikov--Ginzburg. Lorsque la caract\'eristique de $\Bbbk$ est sup\'erieure au nombre de Coxeter, nous d\'eduisons un analogue de la conjecture de Finkelberg--Mirkovi\'c gradu\'ee pour certains blocs singuliers.}

\end{prelims}


\newpage

\setcounter{tocdepth}{1} \tableofcontents

\section{Introduction}
\label{s:intro}

\subsection{The exotic t-structure}

Let $\dot G$ be a connected reductive algebraic group over an algebraically closed field $\bk$, and let $\dot B \subset \dot G$ be a Borel subgroup.  (The undecorated letter $G$ is reserved for another group, to be introduced later.)   Let $\tcN = T^*(\dot G/\dot B)$ be the cotangent bundle of its flag variety, and consider the derived category $\Db \Coh^{\dot G \times \Gm}(\tcN)$ of $(\dot G \times \Gm)$-equivariant coherent sheaves on $\tcN$. The \emph{exotic t-structure} is a remarkable t-structure on this category, originally defined in~\cite{bez:ctm}.  This t-structure has close connections to derived equivalences coming from the geometric Langlands program~\cite{abg:qglg, ar:agsr, mr:etsps}, to the cohomology of tilting modules for Lusztig's quantum groups~\cite{bez:ctm} and algebraic groups~\cite{ahr}, and other topics in representation theory (see~\cite{achar} and the references therein). It is defined using a so-called \emph{exceptional set} of objects in $\Db\Coh^{\dot G \times \Gm}(\tcN)$.

An important and rather nontrivial feature of this t-structure is that \emph{the higher t-cohomology of every exceptional object vanishes}.  This theorem (which was implicit in~\cite{bez:ctm} and proved in different ways in~\cite{ar:agsr, mr:etspc}) implies that the heart of this t-structure has the familiar structure of a \emph{highest weight category}.  For representation-theoretic applications, this fact plays a similar conceptual role to the Kempf vanishing theorem (for reductive groups) or to the Artin vanishing theorem (for direct images of perverse sheaves under affine maps). In particular, this vanishing theorem plays a crucial role in the proof of the graded Finkelberg--Mirkovi\'c conjecture~\cite{prinblock}, which relates the principal block of a reductive group to perverse sheaves on the Langlands dual affine Grassmannian, hence also in the proof of the tilting character formula for reductive groups~\cite{amrw}.

\subsection{Parabolic analogue}

The main result of this paper is a parabolic version of this vanishing theorem. Namely, for any parabolic subgroup $\dot P \subset \dot G$, the first and third authors have defined in~\cite{prinblock} a certain exceptional set in the triangulated category $\Db \Coh^{\dot G \times \Gm}(T^*(\dot G/\dot P))$ which generalizes that defined by Bezrukavnikov in the case $\dot P= \dot B$. As in this special case, our exceptional set determines a t-structure on $\Db \Coh^{\dot G \times \Gm}(T^*(\dot G/\dot P))$, which we again call the exotic t-structure.  In Theorem~\ref{thm:main} we show that, with respect to this t-structure, the higher t-cohomology of every exceptional object vanishes. As a consequence, its heart is a highest weight category; see Corollary~\ref{cor:hw}.

Unlike the case of the full flag variety as treated in~\cite{mr:etspc}, our proof is indirect, and requires a translation of the problem to the realm of constructible sheaves and ``mixed derived categories'' in the sense of~\cite{modrap2}.

\subsection{Applications}

As an application, we prove a modular parabolic version of a derived equivalence originally due to Arkhipov--Bezrukavnikov--Ginzburg, relating $\Db \Coh^{\dot G \times \Gm}(T^*(\dot G/\dot P))$ to a category of constructible sheaves on the affine Grassmannian of the Langlands dual group.

Then, under the additional assumption that the characteristic of $\bk$ is larger than the Coxeter number of $\dot G$, we combine this result with~\cite{prinblock} to deduce a singular version of the graded Finkelberg--Mirkovi\'c conjecture, relating a certain block (whose ``singularity'' is controlled by $\dot P$) for the reductive group whose Frobenius twist is $\dot G$ to a suitable category of Whittaker perverse sheaves on the dual affine Grassmannian.

\subsection{Contents}

In Section~\ref{sec:statement} we state our main result more precisely, and outline our strategy of proof. This result is proved in Section~\ref{s:mainproof}, after some preliminaries in Section~\ref{s:reptheory}.
The applications are deduced in Section~\ref{s:singular-fm}. Finally, in Appendix~\ref{sec:appendix} we extend certain results on parity complexes and ``mixed derived categories'' to the case of Whittaker sheaves. (These results play a technical role in some of our proofs.)

\subsection*{Acknowledgements}

We thank G.~Williamson for useful discussions.

\section{Statement of the main result}
\label{sec:statement}

\subsection{Notation}
\label{ss:notation}

Let $\dot G$ be a connected reductive algebraic group over a field $\bk$ of characteristic $\ell$, with maximal torus and Borel subgroup $\dot T\subset \dot B\subset \dot G$. (The reason why we decorate our notation with a dot should become clear in~\S\ref{ss:relation} below. It does not play any role in earlier subsections.) Let also $\dot B^+$ be the Borel subgroup which is opposite to $\dot B$ (with respect to $\dot T$),
$\fR$ be the root system of $(\dot G,\dot T)$, and $\fR^+\subset\fR$ be the system of positive roots given by the nonzero $\dot T$-weights in $\mathrm{Lie}(\dot B^+)$. We will denote by $\bX$ the character lattice of $\dot T$, and by $S$ the set of simple reflections of the Weyl group $W$ of $(\dot G,\dot T)$ determined by our choice of $\fR^+$. For $s \in S$, we will denote by $\alpha_s$ the corresponding simple root, and by $\alpha_s^\vee$ the associated coroot.

We will make the following assumptions on $\dot G$ and $\bk$:
\begin{enumerate}
 \item[\rm (1)]
 \phantomsection\label{it:assumption-verygood}
 $\ell$ is very good for $\dot G$;
 \item[\rm (2)]
 \phantomsection\label{it:assumption-Gder}
 the derived subgroup of $\dot G$ is simply connected.
 \end{enumerate}
By~\cite[Proposition~2.5.12]{letellier}, our assumption~(\hyperref[it:assumption-verygood]{1}) implies the following property:
 \begin{equation} 
 \label{it:assumption-bilinear-form}
 \text{$\mathrm{Lie}(\dot G)$ admits a nondegenerate $\dot G$-invariant bilinear form.}
 \end{equation}
These assumptions and this property will in particular allow us to use the results of~\cite{ar:agsr}.
(Note that, by~\cite[Corollary~1.6]{mr:etsps}, the condition that $\ell$ is a ``JMW prime'' used in~\cite{ar:agsr} is equivalent to the condition that $\ell$ is good for $\dot G$.)

Our assumption~(\hyperref[it:assumption-Gder]{2}) allows us to choose weights $(\varsigma_s)_{s \in S}$ such that
\[
 \langle \varsigma_s, \alpha_t^\vee \rangle = \delta_{s,t}
\]
for any $s,t \in S$. For any subset $I \subset S$, we then set $\varsigma_I:=\sum_{s \in I} \varsigma_s$.

The main players of this article will be the ``partial Springer resolutions''
\[
 \tcN_{I}:=\dot G\times^{\dot P_I}\dot{\fn}_I
\]
for $I \subset S$, where $\dot P_I \subset \dot G$ is the standard (with respect to $\dot B$) parabolic subgroup corresponding to $I$, and $\dot{\fn}_I$ is the Lie algebra of its unipotent radical. Note that $\tcN_\varnothing$ is the usual Springer resolution of $\dot G$, and that $\tcN_I$ identifies 
with the cotangent bundle to $\dot G / \dot P_I$ (thanks to~\eqref{it:assumption-bilinear-form}).

We let the multiplicative group $\Gm$ act on $\dot{\fn}_I$ by $z \cdot x = z^{-2}x$.  This induces an action on $\tcN_I$ that commutes with the natural $\dot G$-action, so we may consider $\dot G \times \Gm$-equivariant coherent sheaves. We will denote by 
\[
\langle 1 \rangle: \Coh^{\dot G \times \Gm}(\tcN_I)\to \Coh^{\dot G \times \Gm}(\tcN_I)
\] 
the functor of tensoring with the tautological $1$-dimensional $\Gm$-module, and by $\langle n \rangle$ the $n^\mathrm{th}$-power of this functor (for $n \in \Z$). 

\subsection{A graded exceptional set}

Following the notation and conventions of~\cite[\S 9]{prinblock}, we fix a subset $I \subset S$, and consider the objects $\Delta_I(\lambda)$ and $\nabla_I(\lambda)$ in $\Db \Coh^{\dot G \times \Gm}(\tcN_I)$ characterized in~\cite[Proposition~9.16]{prinblock}.\footnote{In~\cite{prinblock} we work under the running assumption that $\ell$ is bigger than the Coxeter number of $\dot G$. However, as noticed in~\cite[Remark~9.1]{prinblock}, the results of this particular section hold in the present generality.} Here $\lambda\in\bXpp_I$ where
\[
 \bXpp_I:=\{\lambda \in \bX \mid \forall s \in I, \, \langle \lambda, \alpha_s^\vee \rangle >0\}.
\]
In order to define these objects one needs to choose an order $\leq'$ on $\bX$. Here we will assume that $\leq'$ is constructed as in~\cite[\S9.4]{prinblock}. Then the objects one obtains are independent of the choices involved in this construction, by~\cite[Proposition~9.19(1) and Proposition~9.24]{prinblock}.

\begin{rmk}{\rm
 In the case $I=\varnothing$, the objects $\Delta_I(\lambda)$ and $\nabla_I(\lambda)$ are the same (up to shift) as those introduced by Bezrukavnikov (for characteristic-$0$ coefficients) in~\cite{bez:ctm}. The general case is similar, replacing characters of $\dot T$ by standard or costandard modules for the Levi factor of $\dot P_I$ containing $\dot T$. In particular, when $I=S$, the object $\Delta_S(\lambda)$ is the Weyl module of highest weight $\lambda-\varsigma_S$, and $\nabla_S(\lambda)$ is the induced module of highest weight $\lambda-\varsigma_S$.}
\end{rmk}

According to~\cite[Proposition~9.16]{prinblock}, the objects $(\nabla_I(\lambda): \lambda \in \bXpp_I)$ form a graded exceptional set of objects with respect to the order $\leq'$ and the ``shift functor'' $\langle 1 \rangle$, in the sense of~\cite[\S 2.1.5]{bez:ctm}. That is, we have
\begin{equation}\label{eqn:excep1}
\Hom_{\Db \Coh^{\dot G \times \Gm}(\tcN_I)}(\nabla_I(\lambda),\nabla_I(\mu)\langle n \rangle[m])=0
\end{equation}
 if $\mu \not\leq' \lambda$ or if $\lambda=\mu$ and $(n,m)\neq(0,0)$, and moreover
\begin{equation}\label{eqn:excep2}
 \Hom_{\Db \Coh^{\dot G \times \Gm}(\tcN_I)}(\nabla_{I}(\lambda),\nabla_{I}(\lambda))=\bk.
\end{equation}
The dual exceptional set is given by $(\Delta_I(\lambda): \lambda \in \bXpp_I)$. In other words, these objects form the unique collection of objects satisfying 
\[
\Hom_{\Db \Coh^{\dot G \times \Gm}(\tcN_I)}(\Delta_I(\lambda),\nabla_I(\mu)\langle n \rangle [m])=0
\]
 if $\mu <' \lambda$ and 
 \begin{equation}\label{eqn:dual-excep}
 \Delta_I(\lambda)\cong\nabla_I(\lambda)\mod\Db \Coh^{\dot G \times \Gm}(\tcN_I)_{<' \lambda}.
 \end{equation}
 Here $\Db \Coh^{\dot G \times \Gm}(\tcN_I)_{<' \lambda}$ is the full triangulated subcategory of $\Db \Coh^{\dot G \times \Gm}(\tcN_I)$ generated by the objects $\nabla_I(\mu)\langle n \rangle$ for $\mu<'\lambda$ and $n\in\Z$, and the condition~\eqref{eqn:dual-excep} means that the images of $\Delta_I(\lambda)$ and $\nabla_I(\lambda)$ in the Verdier quotient
 \[
 \Db \Coh^{\dot G \times \Gm}(\tcN_I)/\Db \Coh^{\dot G \times \Gm}(\tcN_I)_{<' \lambda}
 \]
 are isomorphic. 
These objects in fact satisfy
 \begin{equation}
 \label{eqn:hom-exc}
  \Hom_{\Db \Coh^{\dot G \times \Gm}(\tcN_I)}(\Delta_I(\mu), \nabla_I(\nu) \langle n \rangle [m]) \cong \begin{cases}                                                                                                        \bk & \text{if $\mu=\nu$ and $n=m=0$;}\\
  0 & \text{otherwise,}                                                                                  \end{cases}
 \end{equation}
see~\cite[Corollary~9.18]{prinblock}. Moreover, both of the families $(\nabla_I(\lambda) \langle n \rangle: \lambda \in \bXpp_I, \, n \in \Z)$ and $(\Delta_I(\lambda) \langle n \rangle: \lambda \in \bXpp_I, \, n \in \Z)$ generate $\Db \Coh^{\dot G \times \Gm}(\tcN_I)$ as a triangulated category.

\subsection{The exotic t-structure}
\label{ss:main}

By the general theory of (graded) exceptional sets (see~\cite[Proposition~4]{bez:ctm}), the following pair of subcategories defines a bounded t-structure 
on the triangulated category $\Db \Coh^{\dot G \times \Gm}(\tcN_I)$:
\begin{align*}
 \Db \Coh^{\dot G \times \Gm}(\tcN_I)^{\leq 0} &= \langle \Delta_I(\lambda) \langle n \rangle [m] : \lambda \in \bXpp_I, \, n \in \Z, \, m \in \Z_{\geq 0} \rangle_{\mathrm{ext}};\\
 \Db \Coh^{\dot G \times \Gm}(\tcN_I)^{\geq 0} &= \langle \nabla_I(\lambda) \langle n \rangle [m] : \lambda \in \bXpp_I, \, n \in \Z, \, m \in \Z_{\leq 0} \rangle_{\mathrm{ext}}.
\end{align*}
Here, $\langle \mathcal{A} \rangle_{\mathrm{ext}}$ means the smallest strictly full additive subcategory containing the objects $\mathcal{A}$ and closed under extensions. This t-structure is called the \emph{exotic t-structure}, and its heart will be denoted by
\[
 \ExCoh(\tcN_I).
\]
It is clear from the definitions that the functor $\langle 1 \rangle$ is t-exact for this t-structure.

The main result of this paper is the following.

\begin{thm}
\label{thm:main}
 The objects $\Delta_I(\lambda)$ and $\nabla_I(\lambda)$ ($\lambda \in \bXpp_I$) belong to $\ExCoh(\tcN_I)$.
\end{thm}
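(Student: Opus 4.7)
My plan is to transport the vanishing statement to a category of constructible sheaves on the Langlands dual side, where it reduces to the elementary fact that standard and costandard perverse sheaves lie in the perverse heart (Artin vanishing applied to an affine stratification). Note that by the very definition of the exotic t-structure one has $\Delta_I(\lambda) \in \Db \Coh^{\dot G \times \Gm}(\tcN_I)^{\leq 0}$ and $\nabla_I(\lambda) \in \Db \Coh^{\dot G \times \Gm}(\tcN_I)^{\geq 0}$ for free; and the bi-orthogonality~\eqref{eqn:hom-exc} further gives $\Delta_I(\lambda) \in \Db^{\geq 0}$ when tested against $\nabla$-objects, and dually for $\nabla_I(\lambda)$. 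What remains is the genuinely nontrivial vanishing
$$\Hom(\nabla_I(\lambda),\nabla_I(\mu)\langle n\rangle[m])=0 \quad \text{for } \mu <' \lambda,\ m > 0,$$
together with its $\Delta$-$\Delta$ analogue, and it is this that must be established indirectly.

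First, I would invoke the Achar--Rider modular form of the Arkhipov--Bezrukavnikov--Ginzburg equivalence~\cite{ar:agsr}, which identifies $\Db \Coh^{\dot G \times \Gm}(\tcN_\varnothing)$ with a mixed derived category of Iwahori--Whittaker sheaves on the affine flag variety $\Fl$ of the Langlands dual group, and sends $\Delta_\varnothing(\mu), \nabla_\varnothing(\mu)$ to standard and costandard Iwahori--Whittaker perverse sheaves (up to grading shift). To pass to general $I$, I would then consider the correspondence
$$\tcN_\varnothing \xleftarrow{\iota} \dot G \times^{\dot B} \dot{\fn}_I \xrightarrow{\pi} \tcN_I,$$
in which $\iota$ is a closed inclusion coming from $\dot{\fn}_I \subset \dot{\fn}$ and $\pi$ is a smooth $\dot P_I/\dot B$-fibration, and use the resulting induction/restriction functors (with appropriate shifts) to express the objects $\Delta_I(\lambda)$, $\nabla_I(\lambda)$ in terms of their $I=\varnothing$ counterparts, echoing the parabolic-induction formalism of~\cite{prinblock}.

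Second, under ABG these coherent functors should correspond to pull/push functors between Iwahori--Whittaker sheaves on $\Fl$ and on the corresponding partial affine flag variety $\Fl^I$. The images of $\Delta_I(\lambda)$ and $\nabla_I(\lambda)$ would then be the standard and costandard Iwahori--Whittaker perverse sheaves on $\Fl^I$ attached to $\lambda$; these are perverse because the Iwahori orbits on $\Fl^I$ remain affine. The Whittaker extensions of parity-sheaf machinery developed in Appendix~\ref{sec:appendix} are what make this translation rigorous in the mixed derived setting, and once the identification is in place the theorem follows immediately.

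The principal obstacle is avoiding a circular appeal to the parabolic ABG equivalence, which is itself an \emph{application} of Theorem~\ref{thm:main} (Section~\ref{s:singular-fm}). One must therefore build the coherent-to-constructible dictionary for the parabolic case directly from the non-parabolic equivalence of~\cite{ar:agsr} together with the correspondence above, rather than assuming a parabolic ABG \emph{a priori}. Concretely, this requires a careful comparison between the order $\leq'$ on $\bXpp_I$ and the length order on the relevant minimal coset representatives in $\WaffminI$, an identification of the correspondence-functors with parity-respecting pull/push functors on the Iwahori--Whittaker side, and a verification that the exotic t-structure on $\Db \Coh^{\dot G \times \Gm}(\tcN_I)$ matches the perverse t-structure on the mixed Iwahori--Whittaker category on $\Fl^I$.
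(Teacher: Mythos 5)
Your overall direction---transfer the problem to constructible sheaves on the dual side and use the Whittaker machinery of Appendix~\ref{sec:appendix}---is the same as the paper's, but as written the proposal has a genuine gap, and two of its concrete assertions are off. First, the reformulation: what has to be proved is that $\nabla_I(\lambda)$ lies in $\Db \Coh^{\dot G \times \Gm}(\tcN_I)^{\leq 0}$, i.e.\ the vanishing of \emph{negative}-degree morphisms $\Hom(\nabla_I(\lambda),\nabla_I(\mu)\langle n\rangle[m])$ for $m\leq -1$ (and dually for the $\Delta$'s); the statement you single out, vanishing for $\mu<'\lambda$ and $m>0$, is not what is needed and is in general false even once the theorem is known (positive-degree extensions between costandards with $\mu<'\lambda$ are exactly the ones a highest weight category allows). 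Second, the equivalence of~\cite{ar:agsr, mr:etsps} identifies $\Db\Coh^{\dot G \times \Gm}(\tcN_\varnothing)$ with \emph{Iwahori-constructible} mixed sheaves on the affine Grassmannian, under which the exotic t-structure corresponds to the \emph{adverse} t-structure, not a perverse one; the fact that $\Delta^\mix_w,\nabla^\mix_w$ are perverse only yields the easy representation-theoretic statement (the paper's Lemma~\ref{lem:rt-delta-heart}). To land in Iwahori--Whittaker sheaves on $\Fl$, where exotic does match perverse, you need the Koszul duality $\kappa$ of~\cite{amrw} as an additional ingredient, which your outline omits entirely.

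The central missing idea is a mechanism for identifying the constructible incarnation of $\Delta_I(\lambda)$ without the (unavailable) parabolic ABG equivalence. You rightly flag the circularity danger, but then simply assert that the images ``would be'' the standard and costandard Iwahori--Whittaker perverse sheaves on $\Fl_I$, perverse ``because the orbits are affine''. The paper has to work considerably harder: it proves that $\Pi^I$ is exotic-t-exact and conservative (Lemma~\ref{lem:Pi-exactness-exotic}), so that it suffices to treat $\Pi^I(\Delta_I(\lambda))$; it develops the representation-theoretic t-structure and its tilting theory so that $\Pi^I(\TilRT_I(\lambda))$ contains $\TilRT_\varnothing(\lambda)$ as its ``top'' summand (Lemmas~\ref{lem:translation-Delta} and~\ref{lem:Pi-TilRT}); it embeds $\Delta_I(\lambda)\hookrightarrow\TilRT_I(\lambda)$, pushes through $\Pi^I$ and $\Psi$, and matches the resulting triangle with $(j_\lambda)_!(j_\lambda)^*\tilt^{\mix}_{w_\lambda}\to\tilt^{\mix}_{w_\lambda}\to(i_\lambda)_*(i_\lambda)^*\tilt^{\mix}_{w_\lambda}$ via the uniqueness statement of~\cite[Proposition~1.1.9]{bbd}; and only after applying $\kappa$ does it identify the relevant object as $(q_I)^*\Delta^{\IW,I,\mix}_{w_\lambda}\{\ell(w_0^I)\}$ --- a \emph{pullback} of a standard object, not a standard object on $\Fl_I$ itself --- whose perversity (Lemma~\ref{lem:Whit-perverse}, via Lemma~\ref{lem:pullback-perverse} and Proposition~\ref{prop:D-N-perverse-Whit}) is a nontrivial statement in the mixed setting, since Artin vanishing is not directly available in $\Kb$ of parity complexes and $(q_I)^*$ of a perverse object is not automatically perverse. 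Without these steps, or a genuine substitute for them, your final items (``identify the correspondence functors with pull/push on the Whittaker side'', ``verify that exotic matches perverse on $\Fl_I$'') are restatements of the difficulty rather than a proof.
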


This result can be rephrased as a cohomology-vanishing statement as follows: since $\nabla_I(\lambda)$ belongs to $\Db \Coh^{\dot G \times \Gm}(\tcN_I)^{\geq 0}$ by definition, Theorem~\ref{thm:main} is equivalent to the statement that
\begin{equation}
\label{eqn:cohom-vanishing}
{}^{\mathrm{t}}\mathrm{H}^i(\nabla_I(\lambda)) = 0 \qquad\text{for $i > 0$,}
\end{equation}
along with a similar vanishing statement in negative degrees for $\Delta_I(\lambda)$ (where ${}^{\mathrm{t}}\mathrm{H}^i$ means the $i$-th cohomology with respect to the exotic t-structure).

Once Theorem~\ref{thm:main} is established,
standard arguments (see e.g.~\cite{bgs},~\cite[\S 3.5]{mr:etspc} or~\cite[Proposition~3.11]{modrap2}) then imply the following claim, which formed our main motivation for studying this question.

\begin{cor}
\label{cor:hw}
 The category $\ExCoh(\tcN_I)$ is a graded highest weight category in the sense of~\cite[\S 7]{riche-hab}, with weight poset $(\bXpp_I, \leq')$, standard objects $(\Delta_I(\lambda): \lambda \in \bXpp_I)$, costandard objects $(\nabla_I(\lambda): \lambda \in \bXpp_I)$, and shift functor~$\langle 1 \rangle$.
\end{cor}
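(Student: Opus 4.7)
The plan is to apply the standard machinery (as in \cite{bgs}, \cite[\S 3.5]{mr:etspc}, and \cite[Proposition~3.11]{modrap2}) that extracts a graded highest weight structure from an exceptional set whose members lie in the heart of the associated t-structure. With Theorem~\ref{thm:main} in hand, the hypotheses of this machinery are essentially in place, so the corollary reduces to verifying the axioms of a graded highest weight category one by one. I would take the weight poset to be $(\bXpp_I, \leq')$, and the standard and costandard objects to be those already defined.

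First I would upgrade the orthogonality formula \eqref{eqn:hom-exc} to a computation of Ext-groups in the heart. Since Theorem~\ref{thm:main} places $\Delta_I(\mu)$ and $\nabla_I(\nu)\langle n\rangle$ in $\ExCoh(\tcN_I)$, the $\Hom$-groups in \eqref{eqn:hom-exc} coincide with the Ext-groups computed in the heart, giving
\[
 \Ext^m_{\ExCoh(\tcN_I)}(\Delta_I(\mu), \nabla_I(\nu)\langle n \rangle) =
 \begin{cases} \bk & \text{if } \mu=\nu \text{ and } n=m=0, \\ 0 & \text{otherwise.} \end{cases}
\]
This is the key orthogonality axiom.

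Next I would construct the simple objects and verify the subquotient structure. For each $\lambda \in \bXpp_I$, pass to the Serre quotient of $\ExCoh(\tcN_I)$ by the subcategory generated by the $L_I(\mu)\langle n\rangle$ with $\mu <' \lambda$; by \eqref{eqn:dual-excep}, the images of $\Delta_I(\lambda)$ and $\nabla_I(\lambda)$ coincide there, and the unique nonzero morphism afforded by the Ext-calculation above yields a canonical map $\Delta_I(\lambda) \to \nabla_I(\lambda)$ in $\ExCoh(\tcN_I)$, whose image $L_I(\lambda)$ is simple, and such that $\{L_I(\lambda)\langle n\rangle : \lambda \in \bXpp_I, n \in \Z\}$ is a complete irredundant list of simples in $\ExCoh(\tcN_I)$. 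Then I would check that $\Delta_I(\lambda)$ (resp.~$\nabla_I(\lambda)$) has $L_I(\lambda)$ as its head (resp.~socle), with all other composition factors of the form $L_I(\mu)\langle n\rangle$ with $\mu <' \lambda$; this falls out of the Ext-orthogonality and the defining property \eqref{eqn:dual-excep}.

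Finally I would promote the semi-orthogonal decompositions coming from the exceptional collection into actual $\Delta$- and $\nabla$-filtrations inside $\ExCoh(\tcN_I)$. For any finite downward-closed subset $J \subset \bXpp_I$, the triangulated subcategory generated by $\{\nabla_I(\mu)\langle n\rangle : \mu \in J\}$ (equivalently, by $\{\Delta_I(\mu)\langle n\rangle : \mu \in J\}$) defines recollement truncation functors; by Theorem~\ref{thm:main}, these functors are t-exact for the exotic t-structure, so they restrict to exact endofunctors of $\ExCoh(\tcN_I)$ and provide the required filtration by standards (and dually by costandards) on every object. The generation statement at the end of the previous subsection ensures that every object of $\ExCoh(\tcN_I)$ is exhausted by such $J$.

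The main technical point is the t-exactness of the recollement truncation functors; once Theorem~\ref{thm:main} is known, this is formal, and assembling the pieces above yields the graded highest weight structure in the sense of \cite[\S 7]{riche-hab}, with all data as claimed. The t-exactness of $\langle 1 \rangle$, already noted after the definition of $\ExCoh(\tcN_I)$, provides the required grading-shift autoequivalence.
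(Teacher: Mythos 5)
Your overall strategy is the same as the paper's: the paper gives no detailed argument for this corollary, but simply invokes Theorem~\ref{thm:main} together with the standard ``exceptional set $\Rightarrow$ highest weight heart'' machinery of~\cite{bgs},~\cite[\S 3.5]{mr:etspc} and~\cite[Proposition~3.11]{modrap2}, which is exactly what you set out to reproduce. Your first two steps (orthogonality in the heart, construction of the simple objects as images of the canonical maps $\Delta_I(\lambda) \to \nabla_I(\lambda)$, head/socle conditions) are the standard verification. One small imprecision there: the identification of $\Hom_{\Db \Coh^{\dot G \times \Gm}(\tcN_I)}(-,-[m])$ with $\Ext^m$ in the heart is \emph{not} automatic for all $m$; what is true (and all that is needed) is that the natural map $\Ext^m_{\ExCoh(\tcN_I)}(X,Y) \to \Hom(X,Y[m])$ is an isomorphism for $m \leq 1$ and injective for $m=2$ (see~\cite[Remarque~3.1.17]{bbd}), which suffices for the $\Hom$- and $\Ext^1$-computations and for the $\Ext^2$-vanishing required by the definition in~\cite[\S 7]{riche-hab}.

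The genuine problem is your last step. The truncation functors attached to the subcategory generated by $\{\Delta_I(\mu)\langle n\rangle : \mu \in J\}$ (equivalently $\{\nabla_I(\mu)\langle n\rangle : \mu \in J\}$) for a downward-closed $J$ are \emph{not} t-exact, and this does not follow from Theorem~\ref{thm:main}: the inclusion of the subcategory and the projection to the Verdier quotient are t-exact, but their adjoints are only right, resp.\ left, t-exact (already for perverse sheaves on $\mathbb{P}^1$ stratified by a point and its complement, the functor $i_*i^!$ fails to be t-exact on the heart). Correspondingly, the conclusion you draw from it --- that \emph{every} object of $\ExCoh(\tcN_I)$ admits a filtration by standard objects and one by costandard objects --- is false in any nonsemisimple highest weight category (the object $\nabla_I(\lambda)$ itself has no $\Delta$-filtration in general), and it is not what the definition of a graded highest weight category in~\cite[\S 7]{riche-hab} asks for. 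What that definition requires, beyond the orthogonality and $\Ext^2$-vanishing, is that $\ker(\Delta_I(\lambda) \twoheadrightarrow L_I(\lambda))$ and $\mathrm{coker}(L_I(\lambda) \hookrightarrow \nabla_I(\lambda))$ have composition factors $L_I(\mu)\langle n\rangle$ with $\mu <' \lambda$, together with the finiteness conditions on the poset; these follow from~\eqref{eqn:hom-exc},~\eqref{eqn:dual-excep} and the one-sided exactness of the adjoints by induction on the (interval-finite) poset, as in the cited references, without any claim of $\Delta$-filtrations on arbitrary objects. Replacing your final paragraph by this weaker (and correct) verification brings your argument in line with the standard one the paper appeals to.
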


\subsection{Strategy of proof}
\label{ss:strategy}

The proof of Theorem~\ref{thm:main} will be given in Section~\ref{s:mainproof}. In the case $I=\varnothing$, this theorem is one of the main results of~\cite{mr:etsps} (see also~\cite{ar:agsr} for a different proof). This special case plays a crucial role in the proof for $I \neq \varnothing$.

Broadly speaking, the strategy of the proof is to carry out a kind of categorical ``diagram chase'' using the categories and functors on the second, third and fourth columns in the diagram of Figure~\ref{fig:main}.  (Precise definitions of all the notation in this diagram will be given in the following sections; we only mention here that the categories in the third and fourth columns are certain ``mixed derived categories'' in the sense of~\cite{modrap2}, and that the dashed arrow is not an equivalence but an identification of the right-hand side with a certain summand in the left-hand side; see~\S\ref{ss:proof-adv} for details.)  The leftmost column of this figure is only defined under the stronger assumption that $\ell$ is larger than the Coxeter number of $\dot G$, but it motivates our constructions even when it is not available.  Each category carries one or two t-structures, which in some cases are already known to satisfy analogues of~\eqref{eqn:cohom-vanishing}.  Table~\ref{tab:tstruc} lists the t-structures that will come up in this paper.  In this table, t-structures appearing in the same row correspond to one another under one of the horizontal functors in Figure~\ref{fig:main}. 

In more detail, we begin in Section~\ref{s:reptheory} by defining and studying a second t-structure on $\Db\Coh^{\dot G \times \Gm}(\tcN_\varnothing)$, and relating both t-structures to the affine Grassmannian $\Gr'$ of the Langlands dual group.  In Section~\ref{s:mainproof}, we transfer the problem to the rightmost column of Figure~\ref{fig:main}.  Specifically, we will reduce the proof of~\eqref{eqn:cohom-vanishing} to a similar claim for the perverse t-structure on the category $\Dmix_{\IW}(\Fl_I,\bk)$. This claim is proved in Appendix~\ref{sec:appendix}, using variations on some arguments in~\cite{rw, amrw, modrap2}. (In a sense, the problem is easier for this category because one can use the categories of Bruhat-constructible sheaves on $\Fl$ and $\Fl_I$, which have no counterparts in the world of coherent sheaves.)

\begin{figure}
\[
\begin{tikzcd}
\Db\Rep_\varnothing(G) \ar[d, shift left, "T_\varnothing^I"] &
\Db \Coh^{\dot G \times \Gm}(\tcN_\varnothing) \ar[r, "\Psi", "\sim"'] \ar[l, "\Phi_\varnothing"'] \ar[d, shift left, "\Pi_I"] &
  \Dmix_{(\Iw)}(\Gr',\bk) \ar[r, dashrightarrow, "\kappa"] &
  \Dmix_{\IW}(\Fl,\bk) \ar[d, shift left, "(q_I)_*"] & 
  \\
\Db\Rep_I(G) \ar[u, shift left, "T_I^\varnothing"] &
\Db \Coh^{\dot G \times \Gm}(\tcN_I) \ar[l, "\Phi_I"'] \ar[u, shift left, "\Pi^I"]& &
  \Dmix_{\IW}(\Fl_I,\bk) \ar[u, shift left, "(q_I)^*"] 
\end{tikzcd}
\]
\caption{Setting for the proof of Theorem~\ref{thm:main}}\phantomsection\label{fig:main}
\end{figure}

\begin{table}
\bigskip\bigskip
\begin{center}
\begin{tabular}{c@{\quad}c@{\quad}c@{\quad}c}
$\Db\Rep_I(G)$ &
$\Db \Coh^{\dot G \times \Gm}(\tcN_I)$ &
$\Dmix_{(\Iw)}(\Gr', \bk)$ &
$\Dmix_{\IW}(\Fl_I,\bk)$ \\
\hline
& exotic & adverse & perverse \\
natural & representation-theoretic & perverse
\end{tabular}
\end{center}
\caption{t-structures arising in the proof}\phantomsection\label{tab:tstruc}
\end{table}

Once the proof of Theorem~\ref{thm:main} is complete, we will be in a position 
to obtain an analogue on the bottom line of Figure~\ref{fig:main} of the equivalence ``$\Psi$'' of the upper line.
We do this in Section~\ref{s:singular-fm}, and thereby obtain the parabolic version of the Arkhipov--Bezrukavnikov--Ginzburg equivalence (Theorem~\ref{thm:parabolic-abg}).
(The space $\Gr'$ appearing in Figure~\ref{fig:main} is the ``right coset'' version of the affine Grassmannian, but in Section~\ref{s:singular-fm} we will switch to the traditional ``left coset'' version, denoted by $\Gr$.)
Finally, when $\ell$ is larger than the Coxeter number, we can combine this equivalence with the results of~\cite{prinblock} to obtain the singular version of the graded Finkelberg--Mirkovi\'c conjecture (Theorem~\ref{thm:singular-fm}).

\section{Representation-theoretic t-structure and translation functors}
\label{s:reptheory}

\subsection{The representation-theoretic t-structure}

In this subsection, we will introduce and study a different t-structure on $\Db \Coh^{\dot G \times \Gm}(\tcN_I)$, which is ``Koszul dual'' (in an appropriate sense) to the exotic t-structure.

\begin{lem}
 The objects $(\nabla_I(\lambda) : \lambda \in \bXpp_I)$ form a graded exceptional set of objects in $\Db \Coh^{\dot G \times \Gm}(\tcN_I)$ with respect to the order $\leq'$ and the shift functor $\langle 1 \rangle [1]$.
\end{lem}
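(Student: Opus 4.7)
The plan is to deduce this lemma directly from the graded exceptional set property already established for $(\nabla_I(\lambda) : \lambda \in \bXpp_I)$ with respect to the shift functor $\langle 1 \rangle$, namely from the vanishing~\eqref{eqn:excep1} and the endomorphism statement~\eqref{eqn:excep2}. There is no genuine new geometric or representation-theoretic content: what needs to be verified is only that reparametrizing the ``shift'' from $\langle 1 \rangle$ to $\langle 1 \rangle [1]$ preserves the axioms of a graded exceptional set with respect to the same order $\leq'$.

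Concretely, I would note that applying the new shift functor $\langle 1 \rangle [1]$ a total of $n$ times and then shifting cohomologically by $m$ produces $\nabla_I(\mu) \langle n \rangle [n+m]$. So the conditions to verify are:
\begin{enumerate}
\item[(i)] $\Hom_{\Db \Coh^{\dot G \times \Gm}(\tcN_I)}(\nabla_I(\lambda), \nabla_I(\mu) \langle n \rangle [n+m]) = 0$ whenever $\mu \not\leq' \lambda$, or $\lambda = \mu$ and $(n,m) \neq (0,0)$;
\item[(ii)] $\Hom_{\Db \Coh^{\dot G \times \Gm}(\tcN_I)}(\nabla_I(\lambda), \nabla_I(\lambda)) = \bk$.
\end{enumerate}
Condition (ii) is exactly~\eqref{eqn:excep2}. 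For (i), if $\mu \not\leq' \lambda$ then the vanishing is a special case of~\eqref{eqn:excep1}. If $\lambda = \mu$ and $(n,m) \neq (0,0)$, then the pair $(n, n+m)$ cannot equal $(0,0)$: indeed, if $n = 0$ then $n + m = m \neq 0$, while if $n \neq 0$ the first coordinate already fails to vanish. Hence~\eqref{eqn:excep1} again yields the desired vanishing.

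There is no substantial obstacle here; the entire argument is a matter of bookkeeping. The only point worth stressing is that the same total order $\leq'$ on $\bXpp_I$ is retained, and that the endomorphism ring at each object is still just $\bk$ concentrated in bidegree $(0,0)$ with respect to the new bigrading $(n, n+m)$. I would present the verification in three or four lines immediately after recalling~\eqref{eqn:excep1} and~\eqref{eqn:excep2}.
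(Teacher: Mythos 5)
Your proposal is correct and coincides with the paper's own argument: both reduce the statement to~\eqref{eqn:excep1} and~\eqref{eqn:excep2} by observing that the reindexing $(n,m)\mapsto(n,n+m)$ is a bijection sending $(0,0)$ to $(0,0)$ and nothing else, so the axioms for the shift functor $\langle 1\rangle[1]$ are literally equivalent to those already established for $\langle 1\rangle$. Your write-up just makes this bookkeeping slightly more explicit than the paper does.
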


\begin{proof}
The claim follows from the observation that the definition of a graded exceptional set does not depend on the choice of ``shift functor.'' More precisely, in our case
the assertion we must prove states that
 \[ \Hom_{\Db \Coh^{\dot G \times \Gm}(\tcN_I)}(\nabla_{I}(\lambda),\nabla_{I}(\mu)\langle n \rangle [m+n])=0 
 \]
 if $\mu \not\leq' \lambda$ or if $\lambda=\mu$ and $(n,m)\neq(0,0)$, and moreover that
 \[
   \Hom_{\Db \Coh^{\dot G \times \Gm}(\tcN_I)}(\nabla_{I}(\lambda),\nabla_{I}(\lambda))=\bk.
 \]
But these are clearly equivalent to~\eqref{eqn:excep1} and~\eqref{eqn:excep2}.
\qed
\end{proof}

Of course the dual exceptional set is again $(\Delta_I(\lambda) : \lambda \in \bXpp_I)$. Using once again~\cite[Proposition~4]{bez:ctm}, we obtain that 
the following pair of subcategories forms a bounded t-structure
on $\Db \Coh^{\dot G \times \Gm}(\tcN_I)$:
\begin{align*}
 \Db \Coh^{\dot G \times \Gm}(\tcN_I)_{\mathrm{RT}}^{\leq 0} &= \langle \Delta_I(\lambda) \langle n \rangle [n+m] : \lambda \in \bXpp_I, \, n \in \Z, \, m \in \Z_{\geq 0} \rangle_{\mathrm{ext}};\\
 \Db \Coh^{\dot G \times \Gm}(\tcN_I)^{\geq 0}_{\mathrm{RT}} &= \langle \nabla_I(\lambda) \langle n \rangle [n+m] : \lambda \in \bXpp_I, \, n \in \Z, \, m \in \Z_{\leq 0} \rangle_{\mathrm{ext}}.
\end{align*}
This t-structure will be called the \emph{representation-theoretic} t-structure, and its heart will be denoted
\[
 \grRep(\tcN_I).
\]
By construction, the functor $\langle 1 \rangle[1]$ is t-exact with respect to this t-structure.

\begin{rmk}{\rm
The motivation for our terminology and notation should become clear in~\S\ref{ss:relation} below.}
\end{rmk}

\subsection{Geometric translation functors}
\label{ss:geometric-translation}

Now we will make use of the ``translation functors''
\[
\begin{tikzcd}[column sep=large]
\Db \Coh^{\dot G \times \Gm}(\tcN_\varnothing) \ar[r, bend left=5, "\Pi_I"] &
\Db \Coh^{\dot G \times \Gm}(\tcN_I) \ar[l, bend left=5, "\Pi^I"]
\end{tikzcd}
\]
defined in~\cite[\S 9.2]{prinblock} as follows.
Set $\tcN_{\varnothing,I}:=\dot G\times^{\dot B}\dot\fn_I$ and $n_I:=\dim(\dot P_I\slash\dot B)$. The inclusion $\dot\fn_I\hookrightarrow\dot\fn_\varnothing$ induces a $\dot G$-equivariant morphism $$\se_I:\tcN_{\varnothing,I}\hookrightarrow\tcN_\varnothing.$$ There is also a smooth proper map $$\mu_I:\tcN_{\varnothing,I}\rightarrow\tcN_I$$ with fibers isomorphic to $\dot P_I\slash\dot B$. We define:
\begin{align*}
\Pi_I(\cF)&:=(\mu_{I})_*(\se_I)^{*} \bigl( \cF\otimes_{\cO_{\tcN_\varnothing}}\cO_{\tcN_\varnothing}(-\varsigma_{I}) \bigr), \\
\Pi^{I}(\cF)&:=(\se_{I})_*(\mu_I)^{*}(\cF)\otimes_{\cO_{\tcN_\varnothing}}\cO_{\tcN_\varnothing}(\varsigma_{I}-2\rho_I)\langle - n_I \rangle,
\end{align*}
where $\rho_I$ is the halfsum of the positive roots which belong to the sublattice of $\Z\fR$ generated by the simple roots $\alpha_s$ with $s \in I$. (Here, as in~\cite{prinblock}, all the functors are understood to be \emph{derived}.)

\begin{lem}
\label{lem:Pi-exactness-exotic}
 The functor $\Pi^I$ is t-exact with respect to both the exotic and repre\-sen\-tation-theoretic t-structures, and it does not kill any nonzero object.
\end{lem}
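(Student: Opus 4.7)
The plan is to reduce everything to two key computations that should be extracted from~\cite{prinblock}: for every $\lambda \in \bXpp_I$ there are canonical isomorphisms
\[
 \Pi^I(\Delta_I(\lambda)) \cong \Delta_\varnothing(\lambda) \quad\text{and}\quad \Pi^I(\nabla_I(\lambda)) \cong \nabla_\varnothing(\lambda).
\]
The line-bundle twist $\mathcal{O}_{\tcN_\varnothing}(\varsigma_I-2\rho_I)\langle -n_I\rangle$ and the homological shift built into the definition of $\Pi^I$ are chosen precisely so that these formulas hold on the nose, without any additional corrections. Morally, they express the geometric counterpart of the fact that a translation functor from the wall to the regular chamber sends standard (resp.\ costandard) modules to standard (resp.\ costandard) modules.

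Granting these identifications, both t-exactness statements become essentially formal. The exotic and representation-theoretic t-structures on $\tcN_I$ are defined as extension-closures of appropriate cohomological shifts of the $\Delta_I(\lambda)\langle n\rangle$ (resp.\ $\nabla_I(\lambda)\langle n\rangle$), and the analogous descriptions hold on $\tcN_\varnothing$ with $\bXpp$ in place of $\bXpp_I$. Since $\Pi^I$ commutes with $\langle 1\rangle$ and with cohomological shifts, the translation formulas immediately give
\[
 \Pi^I\bigl(\Db\Coh^{\dot G\times\Gm}(\tcN_I)^{\leq 0}\bigr) \subset \Db\Coh^{\dot G\times\Gm}(\tcN_\varnothing)^{\leq 0}
\]
together with the analogous containment for the $\geq 0$ parts, in both t-structures.

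For the statement that $\Pi^I$ kills no nonzero object, the plan is to decompose $\Pi^I$ into manifestly conservative pieces. Tensoring with $\mathcal{O}_{\tcN_\varnothing}(\varsigma_I-2\rho_I)\langle -n_I\rangle$ is an autoequivalence, so it suffices to show that $(\se_I)_* \circ (\mu_I)^*$ is conservative. The morphism $\mu_I : \tcN_{\varnothing,I} \to \tcN_I$ is a smooth proper locally trivial fiber bundle whose fibers are isomorphic to the flag variety $\dot P_I/\dot B$ of the Levi quotient of $\dot P_I$. Kempf vanishing (which holds in all characteristics, applied to the trivial weight) gives $H^\bullet(\dot P_I/\dot B, \mathcal{O}) = \bk$ concentrated in degree zero, so $R(\mu_I)_*\mathcal{O}_{\tcN_{\varnothing,I}} \cong \mathcal{O}_{\tcN_I}$; the projection formula then shows that the adjunction unit $\mathrm{id} \to R(\mu_I)_* \circ (\mu_I)^*$ is an isomorphism, whence $(\mu_I)^*$ is fully faithful, in particular conservative. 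The map $\se_I$ is a closed embedding, so $(\se_I)_*$ is exact on the underlying categories of coherent sheaves and sends nonzero sheaves to nonzero sheaves; passing to cohomology sheaves, this implies that $(\se_I)_*$ is conservative on bounded derived categories. The composition is therefore conservative, which is what we want.

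The main obstacle I anticipate is simply recording, and possibly briefly verifying, the two translation formulas from~\cite{prinblock}; once these are in hand, both parts of the proof are formal consequences.
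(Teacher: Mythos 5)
Your key step is false: for $I \neq \varnothing$ one does \emph{not} have $\Pi^I(\Delta_I(\lambda)) \cong \Delta_\varnothing(\lambda)$ or $\Pi^I(\nabla_I(\lambda)) \cong \nabla_\varnothing(\lambda)$. The functor that sends a single (co)standard object to a single (co)standard object (up to shift, or to zero) is $\Pi_I$, i.e.\ ``translation onto the wall'': by~\cite[Corollary~9.21 and Proposition~9.24]{prinblock}, $\Pi_I(\Delta_\varnothing(\lambda))$ is a shift of $\Delta_I(\lambda')$ if $\lambda \in W_I\bXpp_I$ and $0$ otherwise. The adjoint $\Pi^I$, like translation \emph{out of} a wall, spreads objects out: $\Pi^I(\Delta_I(\lambda))$ admits a filtration whose subquotients are $\Delta_\varnothing(\mu)\langle m\rangle[m]$ for \emph{all} $\mu \in W_I(\lambda)$ (each once) --- this is exactly Lemma~\ref{lem:translation-Delta}, which has $|W_I|>1$ subquotients whenever $I\neq\varnothing$, so your proposed isomorphism cannot hold. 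Note also that you cannot simply substitute that filtration statement for your isomorphisms, since in the paper it is proved \emph{after} (and using) the present lemma together with Lemma~\ref{lem:rt-delta-heart}; the non-circular route is the one the paper takes: test $\Pi^I(\nabla_I(\mu))$ against the objects $\Delta_\varnothing(\lambda)\langle n\rangle[r]$ using the criterion of~\cite[Proposition~4(c)]{bez:ctm}, move across the adjunction $(\Pi_I\langle n_I\rangle[n_I],\Pi^I)$ (and its counterpart for right t-exactness), plug in the formulas for $\Pi_I(\Delta_\varnothing(\lambda))$, and conclude with the orthogonality~\eqref{eqn:hom-exc}.

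By contrast, your argument for conservativity is correct and is a genuinely different, more geometric route than the paper's: the paper deduces it from the same adjunction-plus-orthogonality computation, whereas you factor $\Pi^I$ as a line-bundle twist (an autoequivalence), the pullback $(\mu_I)^*$ (fully faithful since $R(\mu_I)_*\mathcal{O}\cong\mathcal{O}$ by the vanishing of higher cohomology of $\mathcal{O}_{\dot P_I/\dot B}$ and the projection formula), and the pushforward $(\se_I)_*$ along a closed embedding (exact and faithful on coherent sheaves, hence killing no nonzero complex). Each factor kills no nonzero object, so neither does $\Pi^I$; this part can stand, but the t-exactness claim needs the repair described above.
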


For the t-exactness statement in this lemma, one should equip both categories $\Db \Coh^{\dot G \times \Gm}(\tcN_\varnothing)$ and $\Db \Coh^{\dot G \times \Gm}(\tcN_I)$ with the exotic t-structure, or both with the representation-theoretic t-structure. In the proof below we denote by $W_I \subset W$ the subgroup generated by $I$.

\begin{proof}
 To show that $\Pi^I$ is left t-exact, by definition it suffices to show that the object $\Pi^I(\nabla_I(\mu))$ belongs to $\Db \Coh^{\dot G \times \Gm}(\tcN_\varnothing)^{\geq 0}$ and to $\Db \Coh^{\dot G \times \Gm}(\tcN_\varnothing)^{\geq 0}_{\mathrm{RT}}$. In view of~\cite[Proposition~4(c)]{bez:ctm}, we require that 
 \begin{equation}\label{eqn:Pi-exactness-exotic}
 \Hom_{\Db \Coh^{\dot G \times \Gm}(\tcN_\varnothing)} \bigl( \Delta_\varnothing(\lambda) \la n\ra[r], \Pi^I(\nabla_I(\mu)) \bigr)=0
 \end{equation}
 for any $\lambda \in \bX$ and any $n, r \in \Z$ with $r>0$ (for the exotic case) or $r-n > 0$ (for the representation-theoretic case). By adjunction (see~\cite[Remark 9.5]{prinblock}), we have
\[
 \Hom(\Delta_\varnothing(\lambda)\la n \ra[r], \Pi^I(\nabla_I(\mu)))  \cong\Hom(\Pi_I(\Delta_\varnothing(\lambda))\langle n_{I} + n\rangle [n_{I}+r], \nabla_I(\mu)).
\]
By~\cite[Corollary~9.21 and Proposition 9.24(2)]{prinblock}, we have
\[
\Pi_I(\Delta_\varnothing(\lambda))\langle n_{I} + n\rangle [n_{I}+r] \cong
\begin{cases}
\Delta_I(\lambda')\la m+n\ra [m+r] & \text{for some $\lambda' \in \bXpp_I$ and $m \ge 0$ if $\lambda \in W_I\bXpp_I$;} \\
0 & \text{if $\lambda \notin W_I\bXpp_I$.}
\end{cases}
\]
Using~\eqref{eqn:hom-exc}, we deduce that~\eqref{eqn:Pi-exactness-exotic} holds under the present assumptions on $n$ and $r$, so $\Pi^I$ is left t-exact. An analogous argument using the adjunction $(\Pi^I\langle -n_{I}\rangle [-n_{I}], \Pi_I)$ shows that $\Pi^I$ is also right t-exact.

The fact that $\Pi^I$ does not kill any nonzero object follows from similar arguments: if $\cF$ is a nonzero object in $\Db \Coh^{\dot G \times \Gm}(\tcN_I)$, then there exists $\lambda \in \bXpp_I$ and $r,n \in \Z$ such that
\[
 \Hom_{\Db \Coh^{\dot G \times \Gm}(\tcN_I)} \bigl( \Delta_I(\lambda) \la n\ra[r], \cF \bigr) \neq 0.
\]
As above we have
\[ \Hom_{\Db \Coh^{\dot G \times \Gm}(\tcN_\varnothing)} \bigl( \Delta_\varnothing(\lambda) \la n\ra[r], \Pi^I(\cF) \bigr)
 \cong \Hom_{\Db \Coh^{\dot G \times \Gm}(\tcN_I)} \bigl( \Delta_I(\lambda) \la n\ra[r], \cF \bigr) \neq 0,
\]
so that $\Pi^I(\cF) \neq 0$.
\qed
\end{proof}

\begin{lem}
\label{lem:Pi-reptheory}
 The functor $\Pi_I$ is t-exact with respect to the repre\-sen\-tation-theoretic t-structure.
\end{lem}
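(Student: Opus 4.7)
The plan is to deduce the t-exactness of $\Pi_I$ from the t-exactness of $\Pi^I$ already proved in Lemma~\ref{lem:Pi-exactness-exotic}, via the two adjunctions between these functors, together with standard adjunction yoga for t-structures.

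First I would observe that, by construction of the representation-theoretic t-structure, the autoequivalence $\la 1\ra[1]$ is t-exact (its effect on the generators $\Delta_I(\lambda)\la n\ra[n+m]$ and $\nabla_I(\lambda)\la n\ra[n+m]$ is just to increment $n$). Consequently $\la k\ra[k]$ is t-exact for every $k\in\Z$, and in particular, combining this with Lemma~\ref{lem:Pi-exactness-exotic}, the composite $\Pi^I\la -n_I\ra[-n_I]$ is t-exact for the representation-theoretic t-structure.

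Next I would invoke the two adjunctions already used in the proof of Lemma~\ref{lem:Pi-exactness-exotic}, namely
\[
 \Pi_I\la n_I\ra[n_I] \dashv \Pi^I \qquad \text{and} \qquad \Pi^I\la -n_I\ra[-n_I] \dashv \Pi_I,
\]
together with the standard fact that for an adjoint pair $F \dashv G$ between triangulated categories equipped with bounded non-degenerate t-structures, $F$ is right t-exact if and only if $G$ is left t-exact (a direct consequence of the characterization $X \in \Db^{\le 0}$ iff $\Hom(X,Y)=0$ for all $Y \in \Db^{\ge 1}$, applied via the adjunction isomorphism). Applying this to the first adjunction and using left t-exactness of $\Pi^I$ gives that $\Pi_I\la n_I\ra[n_I]$, hence also $\Pi_I$, is right t-exact. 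Applying it to the second adjunction and using right t-exactness of $\Pi^I\la -n_I\ra[-n_I]$ gives that $\Pi_I$ is left t-exact. Combining both halves yields the claim.

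There is no real obstacle in this argument: it is purely formal once Lemma~\ref{lem:Pi-exactness-exotic} is in hand, and the substantive content has already been absorbed into that lemma (and, through it, into the computations of~\cite{prinblock}). The only minor point that should be made explicit is the t-exactness of $\la 1\ra[1]$ for the representation-theoretic t-structure, which is visible from the definition of the generating sets.
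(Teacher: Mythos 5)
Your argument is correct, but it follows a genuinely different route from the paper's. The paper's proof is a direct check on generators: by construction of the representation-theoretic t-structure it suffices to show that $\Pi_I(\Delta_\varnothing(\lambda))$ lies in $\Db \Coh^{\dot G \times \Gm}(\tcN_I)_{\mathrm{RT}}^{\leq 0}$ and $\Pi_I(\nabla_\varnothing(\lambda))$ in $\Db \Coh^{\dot G \times \Gm}(\tcN_I)_{\mathrm{RT}}^{\geq 0}$, and both follow immediately from \cite[Proposition~9.24]{prinblock}, which identifies these images (up to the t-exact shift $\la m \ra [m]$, or as $0$) with objects $\Delta_I(\mu)$, resp.\ $\nabla_I(\mu)$. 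You instead deduce the statement purely formally from the t-exactness of $\Pi^I$ (Lemma~\ref{lem:Pi-exactness-exotic}) via the two adjunctions $(\Pi_I\la n_I\ra[n_I], \Pi^I)$ and $(\Pi^I\la -n_I\ra[-n_I], \Pi_I)$ and the standard equivalence ``left adjoint right t-exact $\Leftrightarrow$ right adjoint left t-exact''; this is sound, there is no circularity (Lemma~\ref{lem:Pi-exactness-exotic} is proved independently of the present lemma), and the boundedness/non-degeneracy hypotheses you mention are not actually needed since the relevant characterizations $D^{\leq 0} = {}^{\perp}(D^{\geq 1})$ and $D^{\geq 1} = (D^{\leq 0})^{\perp}$ hold for any t-structure. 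What each approach buys: yours is more robust and reusable (it works for any functor admitting both adjoints up to a t-exact shift of a t-exact functor), while the paper's is more direct and, since the computation of $\Pi_I$ on (co)standard objects from \cite{prinblock} is in any case the substantive input behind Lemma~\ref{lem:Pi-exactness-exotic}, it avoids the adjunction yoga entirely; both ultimately rest on the same facts from \cite[\S 9]{prinblock}.
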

\begin{proof}
We must show that $\Pi_I(\Delta_\varnothing(\lambda))$ belongs to $\Db \Coh^{\dot G \times \Gm}(\tcN_I)_{\mathrm{RT}}^{\leq 0}$ and that $\Pi_I(\nabla_\varnothing(\lambda))$ belongs to $\Db \Coh^{\dot G \times \Gm}(\tcN_I)_{\mathrm{RT}}^{\geq 0}$.  Both of these assertions follow from~\cite[Proposition~9.24]{prinblock} and the definition of the representation-theoretic t-structure.
\qed
\end{proof}

\subsection{Mixed derived category of the affine Grassmannian}
\label{ss:mixed-Gr}

The proof of Theorem~\ref{thm:main} will require a ``translation'' of the problem to a setting involving constructible sheaves on affine flag varieties. This will require in particular an equivalence of categories obtained in~\cite{ar:agsr, mr:etsps} (which adapts a result obtained by Arkhipov--Bezrukavnikov--Ginzburg~\cite{abg:qglg} for characteristic-$0$ coefficients), that we explain now.

Let $\mathscr{K} = \C( \hspace{-0.5pt} (\mathrm{t}) \hspace{-0.5pt} )$ be the field of formal Laurent series in an indeterminate $\mathrm{t}$, and let $\mathscr{O} = \C[ \hspace{-0.5pt} [\mathrm{t}] \hspace{-0.5pt} ]$ be the ring of formal power series in $\mathrm{t}$.  Let $\dot G^\vee$ be the complex reductive algebraic group which is Langlands dual to $\dot G$. By definition this group comes with a maximal torus $\dot T^\vee$ whose cocharacter lattice is $\bX$, and such that the root system of $(\dot G^\vee, \dot T^\vee$) identifies with the coroot system of $(\dot G, \dot T)$. We will also consider the Borel subgroup $\dot B^\vee \subset \dot G^\vee$ containing $\dot T^\vee$ and associated with the negative coroots of $\dot G$.

Let $\Waff:=W\ltimes\bX$ be the extended affine Weyl group and let $\WaffCox\subset\Waff$ be the ``true'' affine Weyl group, i.e.~the subgroup $W\ltimes\Z\fR$. (The image of $\lambda \in \bX$ in $\Waff$ will be denoted $t_\lambda$.) Then $\WaffCox$ has a natural structure of Coxeter group (such that $S$ is a subset of the set of simple reflections), and whose length function extends to $\Waff$. We will denote by $\Waffmin \subset \Waff$ the subset of elements $w$ which are minimal in the coset $W w$. This subset is in bijection with $\bX$, via the map sending $\lambda \in \bX$ to the minimal element $w_\lambda$ in $W t_\lambda$.
(See e.g.~\cite[\S 2.2]{mr:etspc} for details and references on this subject.)

Consider the (left version of the) affine Grassmannian
\[
 \Gr' := \dot G^\vee(\mathscr{O}) \backslash \dot G^\vee(\mathscr{K}).
\]
  We denote by $\Iw \subset \dot G^\vee(\mathscr{O})$ the Iwahori subgroup associated with $\dot B^\vee$, i.e.~the inverse image of $\dot B^\vee$ under the ``evaluation at $\mathrm{t}=0$'' morphism $\dot G^\vee(\mathscr{O}) \to \dot G^\vee$. We consider the action of $\Iw$ on $\Gr'$ induced by right multiplication in $\dot G^\vee(\mathscr{K})$. The orbits for this action are parametrized in a natural way by the subset $\Waffmin \subset \Waff$, and we will denote by $\Gr'_w$ the orbit associated with $w$.
  Since each of these orbits is isomorphic to an affine space, following~\cite{modrap2} we can consider the \emph{mixed derived category}
  \[
   \Dmix_{(\Iw)}(\Gr',\bk):= \Kb\Par_{(\Iw)}(\Gr',\bk)
  \]
and its perverse t-structure, where $\Par_{(\Iw)}(\Gr',\bk)$ is the category of parity complexes on $\Gr'$ with respect to the stratification by $\Iw$-orbits, in the sense of~\cite{jmw}. In particular, this theory provides standard and costandard (mixed) perverse sheaves\footnote{In the general setting for the construction from~\cite{modrap2} it is not known if the standard and costandard objects are perverse; but this is true in the case of affine Grassmannians by~\cite[Corollary~A.8]{modrap2}.} $\Delta^{\mix}_w$ and $\nabla^{\mix}_w$ for all $w \in \Waffmin$, and indecomposable tilting perverse sheaves $\tilt^{\mix}_w$. We will denote by $\{1\}$ the autoequivalence of $\Dmix_{(\Iw)}(\Gr',\bk)$ induced by the cohomological shift in $\Par_{(\Iw)}(\Gr',\bk)$, and by $[1]$ the usual shift of complexes; then the ``Tate twist'' $\langle 1 \rangle := \{-1\}[1]$ is t-exact for the perverse t-structure.

The results of~\cite{ar:agsr, mr:etsps} provide an equivalence of triangulated categories
\[
 \Psi : \Db \Coh^{\dot G \times \Gm}(\tcN_\varnothing) \xrightarrow{\sim} \Dmix_{(\Iw)}(\Gr', \bk)
\]
which satisfies $\Psi \circ \langle 1 \rangle \cong \langle 1 \rangle[-1] \circ \Psi$ and
\begin{equation}
\label{eqn:Psi-DN}
\Psi(\Delta_\varnothing(\lambda)) \cong \Delta^{\mix}_{w_\lambda}, \quad \Psi(\nabla_\varnothing(\lambda)) \cong \nabla^{\mix}_{w_\lambda}.
\end{equation}
From these properties we see that $\Psi$ is t-exact if $\Db \Coh^{\dot G \times \Gm}(\tcN_\varnothing)$ is equipped with the representation-theoretic t-structure and $\Dmix_{(\Iw)}(\Gr', \bk)$ is equipped with the perverse t-structure. 

\begin{rmk}{\rm
 See~\cite[Remark~11.3]{prinblock} for a comparison of the conventions used in~\cite{ar:agsr} and in~\cite{mr:etsps}.  The assumptions in~\S\ref{ss:notation} come from~\cite{ar:agsr}; in~\cite{mr:etsps}, there are slightly more restrictive conditions on the group.  Note that~\cite{ar:agsr} and parts of~\cite{mr:etsps} work instead with the ``left coset'' affine Grassmannian $\dot G^\vee(\mathscr{K}) / \dot G^\vee(\mathscr{O})$, but as explained in~\cite[\S 3.2]{mr:etsps} it is straightforward to pass back and forth between this variety and $\Gr'$.  We will use $\Gr'$ for now because it is more convenient for the arguments in Section~\ref{s:mainproof}, but in Section~\ref{s:singular-fm}, we will switch to (a positive characteristic analogue of) $\dot G^\vee(\mathscr{K}) / \dot G^\vee(\mathscr{O})$.}
\end{rmk}

The triangulated category $\Dmix_{(\Iw)}(\Gr', \bk)$ also admits a second interesting t-struc\-ture, called the \emph{adverse} t-structure, defined in~\cite[\S A.2]{ar:agsr}. This t-structure consists of the subcategories
\begin{align*}
 {}^a \hspace{-1pt} \Dmix_{(\Iw)}(\Gr', \bk)^{\leq 0} &= \langle \Delta^{\mix}_w \langle n \rangle [m] : w \in \Waffmin, \, n,m \in \Z \text{ with } n+m \geq 0 \rangle_{\mathrm{ext}}; \\
 {}^a \hspace{-1pt} \Dmix_{(\Iw)}(\Gr', \bk)^{\geq 0} &= \langle \nabla^{\mix}_w \langle n \rangle [m] : w \in \Waffmin, \, n,m \in \Z \text{ with } n+m \leq 0 \rangle_{\mathrm{ext}}.
\end{align*}
From this definition we see that the functor $\langle -1 \rangle[1]$ is t-exact with respect to the adverse t-structure and
(using also~\eqref{eqn:Psi-DN}) that $\Psi$ is t-exact if $\Db \Coh^{\dot G \times \Gm}(\tcN_\varnothing)$ is equipped with the exotic t-structure and $\Dmix_{(\Iw)}(\Gr', \bk)$ is equipped with the adverse t-struc\-ture.

\subsection{Highest weight structure}

The following analogue of Theorem~\ref{thm:main} for the representation-theoretic t-structure turns out to be much easier to prove.

\begin{lem}
\label{lem:rt-delta-heart}
 The objects $\Delta_I(\lambda)$ and $\nabla_I(\lambda)$ ($\lambda \in \bXpp_I$) belong to $\grRep(\tcN_I)$.
\end{lem}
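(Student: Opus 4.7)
The plan is to establish the case $I = \varnothing$ by transporting the problem to the affine Grassmannian via the equivalence $\Psi$ of~\S\ref{ss:mixed-Gr}, and then to bootstrap to general $I$ via the translation functor $\Pi_I$. For the base case $I = \varnothing$: as recalled in~\S\ref{ss:mixed-Gr}, $\Psi$ is t-exact from the representation-theoretic t-structure on $\Db\Coh^{\dot G \times \Gm}(\tcN_\varnothing)$ to the perverse t-structure on $\Dmix_{(\Iw)}(\Gr', \bk)$, and by~\eqref{eqn:Psi-DN} it identifies $\Delta_\varnothing(\lambda)$ and $\nabla_\varnothing(\lambda)$ with the mixed standard and costandard perverse sheaves $\Delta^{\mix}_{w_\lambda}$ and $\nabla^{\mix}_{w_\lambda}$, respectively. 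Since the latter are perverse by~\cite[Corollary~A.8]{modrap2} (as noted in the footnote of~\S\ref{ss:mixed-Gr}), one concludes $\Delta_\varnothing(\lambda), \nabla_\varnothing(\lambda) \in \grRep(\tcN_\varnothing)$.

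For general $I$: by Lemma~\ref{lem:Pi-reptheory} the functor $\Pi_I$ is t-exact for the representation-theoretic t-structure, so $\Pi_I(\Delta_\varnothing(\lambda))$ and $\Pi_I(\nabla_\varnothing(\lambda))$ lie in $\grRep(\tcN_I)$ for every $\lambda \in \bX$. By~\cite[Proposition~9.24]{prinblock} (whose content for $\Delta$'s is recalled inside the proof of Lemma~\ref{lem:Pi-exactness-exotic}), for $\lambda \in W_I\bXpp_I$ one has
\[
\Pi_I(\Delta_\varnothing(\lambda)) \cong \Delta_I(\lambda')\langle k \rangle[k] \quad \text{and} \quad \Pi_I(\nabla_\varnothing(\lambda)) \cong \nabla_I(\lambda')\langle k \rangle[k]
\]
for some $\lambda' \in \bXpp_I$ and $k \in \Z$ depending on $\lambda$ (and both objects vanish if $\lambda \notin W_I\bXpp_I$). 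Since the autoequivalence $\langle 1\rangle[1]$ is t-exact for the representation-theoretic t-structure by construction, the unshifted objects $\Delta_I(\lambda')$ and $\nabla_I(\lambda')$ also lie in $\grRep(\tcN_I)$. The correspondence $\lambda \mapsto \lambda'$ realizes the natural $W_I$-quotient $W_I\bXpp_I \twoheadrightarrow \bXpp_I$ and is in particular surjective, so every element of $\bXpp_I$ is reached and every $\Delta_I(\lambda), \nabla_I(\lambda)$ with $\lambda \in \bXpp_I$ is covered.

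I do not expect any substantive obstacle. The deep inputs --- the equivalence $\Psi$ together with the explicit description of $\Pi_I$ on standard and costandard objects --- are already available from~\cite{ar:agsr, mr:etsps} and~\cite{prinblock}, and the lemma reduces to a formal transport argument. The only point requiring care is matching the various Tate-type shifts $\langle k \rangle[k]$ that appear, and this is automatic because $\langle 1 \rangle[1]$ is t-exact for the representation-theoretic t-structure by construction.
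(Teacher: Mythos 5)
Your proof is correct and follows essentially the same route as the paper: first the case $I=\varnothing$ by transporting through the equivalence $\Psi$ of \S\ref{ss:mixed-Gr}, then the general case from the t-exactness of $\Pi_I$ (Lemma~\ref{lem:Pi-reptheory}) together with~\cite[Proposition~9.24]{prinblock}. The only minor divergence is in the base case: you use the correspondence between the representation-theoretic and perverse t-structures (as recorded in \S\ref{ss:mixed-Gr} and Table~\ref{tab:tstruc}) together with the perversity of $\Delta^{\mix}_w$ and $\nabla^{\mix}_w$ from~\cite[Corollary~A.8]{modrap2}, whereas the paper's written proof phrases this step via the adverse t-structure and cites~\cite[Proposition~A.16]{ar:agsr}; your variant is consistent with the paper's own dictionary and uses the lighter constructible-side input, so it is perfectly acceptable.
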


\begin{proof}
Let us first treat the case where $I = \varnothing$.  In this case, we can use the equivalence of categories $\Psi$ introduced in~\S\ref{ss:mixed-Gr}. Since this equivalence 
takes the representation-theoretic t-structure on $\Db\Coh^{\dot G \times \Gm}(\tcN_\varnothing)$ to the adverse t-structure on $\Dmix_{(\Iw)}(\Gr', \bk)$, it suffices to prove that the objects $\Delta^{\mix}_w$ and $\nabla^{\mix}_w$ ($w \in \Waffmin$) belong to the heart of the adverse t-structure.  This claim holds by~\cite[Propo\-si\-tion~A.16]{ar:agsr}.

Now suppose that $I \ne \varnothing$, and let $\lambda \in \bXpp_I$.  By~\cite[Proposition~9.24]{prinblock} we have $\Delta_I(\lambda) \cong \Pi_I(\Delta_\varnothing(\lambda))\la n_I\ra[n_I]$. By Lemma~\ref{lem:Pi-reptheory} and the previous paragraph, we conclude that $\Delta_I(\lambda) \in \grRep(\tcN_I)$.  Similar reasoning applies to $\nabla_I(\lambda)$.
\qed
\end{proof}

\begin{rmk}{\rm
 Under the assumption that $\ell$ is bigger than the Coxeter number of $\dot G$, one can alternatively prove Lemma~\ref{lem:rt-delta-heart} by using the fact that the functor $\Phi_I$ considered in~\S\ref{ss:relation} is t-exact if $\Db \Coh^{\dot G \times \Gm}(\tcN_I)$ is endowed with the representation-theoretic t-structure and $\Db \Rep_I(G)$ with its tautological t-structure, and sends standard, resp.~costandard, objects to standard, resp.~costandard, objects.}
\end{rmk}

As in~\S\ref{ss:main}, this lemma implies that the category $\grRep(\tcN_I)$ is a graded highest weight category in the sense of~\cite[\S 7]{riche-hab}, with weight poset $(\bXpp_I, \leq')$, standard objects $(\Delta_I(\lambda): \lambda \in \bXpp_I)$, costandard objects $(\nabla_I(\lambda): \lambda \in \bXpp_I)$, and shift functor $\langle 1 \rangle[1]$. In particular, we can consider the tilting objects in $\grRep(\tcN_I)$. The indecomposable tilting object associated with $\lambda$ will be denoted $\TilRT_I(\lambda)$, and the multiplicity of a standard, resp.~costandard, object $\Delta_I(\lambda) \langle m \rangle [m]$, resp.~$\nabla_I(\lambda) \langle m \rangle [m]$, in a tilting object $\cT$ will be denoted
\[
 (\cT : \Delta_I(\lambda) \langle m \rangle [m] ), \quad \text{resp.} \quad (\cT : \nabla_I(\lambda) \langle m \rangle [m] ).
\]

Note that, in the case $I=\varnothing$, the isomorphisms in~\eqref{eqn:Psi-DN} imply
that for $\lambda \in \bX$ we have
\begin{equation}
 \label{eqn:Psi-tilting}
 \Psi(\TilRT_\varnothing(\lambda)) \cong \tilt^{\mix}_{w_\lambda}.
\end{equation}

\begin{rmk}{\rm
 In the case $I=\varnothing$, the object
 $\TilRT_\varnothing(\lambda)$ coincides with the indecomposable \emph{exotic parity object} in $\Db \Coh^{\dot G \times \Gm}(\tcN_\varnothing)$ associated with $\lambda$ studied in~\cite{ahr}. Once Corollary~\ref{cor:hw} is established, this claim can be generalized to any subset $I \subset S$.}
\end{rmk}

\subsection{Compatibility with translation functors}
\label{ss:compatibility}

We are now in a position to refine some claims from Lemma~\ref{lem:Pi-exactness-exotic}.

\begin{lem}
\label{lem:translation-Delta}
For any $\lambda \in \bXpp_I$, the object $\Pi^I(\Delta_I(\lambda))$, resp.~$\Pi^I(\nabla_I(\lambda))$, belongs to $\grRep(\tcN_\varnothing)$,
and it admits a filtration whose subquotients are the objects of the form $\Delta_\varnothing(\mu) \langle m \rangle [m]$, resp.~$\nabla_\varnothing(\mu) \langle -m \rangle [-m]$, where $\mu \in W_I ( \lambda)$ and $m$ is the length of the unique element $w \in W_I$ such that $\mu=w(\lambda)$ (each appearing once).
\end{lem}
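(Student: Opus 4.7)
The first assertion is immediate: by Lemma~\ref{lem:rt-delta-heart} the objects $\Delta_I(\lambda)$ and $\nabla_I(\lambda)$ belong to $\grRep(\tcN_I)$, and by Lemma~\ref{lem:Pi-reptheory} the functor $\Pi^I$ is t-exact for the representation-theoretic t-structures, so $\Pi^I(\Delta_I(\lambda))$ and $\Pi^I(\nabla_I(\lambda))$ belong to $\grRep(\tcN_\varnothing)$. The real content of the lemma is the description of the filtration.

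My plan is to exploit the graded highest weight structure on $\grRep(\tcN_\varnothing)$, with shift functor $\sigma := \la 1\ra[1]$ (valid by the analogue of Corollary~\ref{cor:hw} for the representation-theoretic t-structure, as noted after Lemma~\ref{lem:rt-delta-heart}). In any graded highest weight category, an object $X$ of the heart admits a costandard filtration as soon as $\Ext^1(\Delta_\varnothing(\mu)\sigma^c, X) = 0$ for every $\mu$ and $c$, and then the multiplicity of each $\nabla_\varnothing(\mu)\sigma^c$ in such a filtration equals $\dim \Hom(\Delta_\varnothing(\mu)\sigma^c, X)$; a dual statement handles standard filtrations. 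This reduces the lemma to explicit $\Hom$- and $\Ext^1$-computations in $\Db\Coh^{\dot G \times \Gm}(\tcN_\varnothing)$ involving $\Pi^I(\nabla_I(\lambda))$ and $\Pi^I(\Delta_I(\lambda))$.

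These computations will be carried out by invoking the two adjunctions $(\Pi_I, \Pi^I)$ and $(\Pi^I\la -n_I\ra[-n_I], \Pi_I)$ used in the proof of Lemma~\ref{lem:Pi-exactness-exotic} to push $\Pi^I$ across, reducing everything to Hom-spaces on $\tcN_I$. At that point~\cite[Proposition~9.24]{prinblock} applies: it expresses $\Pi_I(\Delta_\varnothing(\mu))$ (and similarly $\Pi_I(\nabla_\varnothing(\mu))$) as a specific diagonal shift of $\Delta_I(\mu^I)$ (resp.~$\nabla_I(\mu^I)$) when $\mu \in W_I\bXpp_I$---where $\mu^I$ denotes the dominant $W_I$-representative of $\mu$, and the shift is encoded by the length of the element of $W_I$ sending $\mu^I$ back to $\mu$---and as zero otherwise. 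Substituting this description and invoking~\eqref{eqn:hom-exc} for the Hom-spaces between standards and costandards on $\tcN_I$, one directly verifies both the $\Ext^1$-vanishing (yielding the existence of the filtration) and the numerical statement: for each $\mu \in W_I(\lambda)$ exactly one subquotient appears, with the prescribed shift $\sigma^{\pm \ell(w)}$ where $\mu = w(\lambda)$.

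The main technical delicacy will be shift bookkeeping. The shifts recorded in~\cite[Proposition~9.24]{prinblock} are given separately in terms of $\la \cdot\ra$ and $[\cdot]$, whereas the lemma requires them to combine into the diagonal form $\sigma^{\pm \ell(w)} = \la \pm \ell(w)\ra[\pm \ell(w)]$; one must check that the contribution $\la -n_I\ra[-n_I]$ coming from the second adjunction (with $n_I = \ell(w_I)$ the length of the longest element of $W_I$) combines with the shifts from Proposition~9.24 to yield exactly this symmetric form, and that the two linear conditions arising in the $\Ext^1$ computation become incompatible. Once this bookkeeping is in place, everything else is mechanical.
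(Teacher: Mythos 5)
Your proposal is correct and takes essentially the same route as the paper: t-exactness of $\Pi^I$ together with Lemma~\ref{lem:rt-delta-heart} for the first claim, and then the graded highest weight filtration criterion, with the relevant $\Hom$- and $\Ext^1$-spaces computed via the adjunctions, \cite[Proposition~9.24]{prinblock} and~\eqref{eqn:hom-exc}, exactly as in the paper (where the mismatch between the diagonal shift and the extra $[1]$ is what kills the $\Ext^1$). One small slip: the t-exactness of $\Pi^I$ for the representation-theoretic t-structure is Lemma~\ref{lem:Pi-exactness-exotic}, not Lemma~\ref{lem:Pi-reptheory}, which concerns $\Pi_I$.
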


\begin{proof}
The first assertion follows from Lemma~\ref{lem:Pi-exactness-exotic} and Lemma~\ref{lem:rt-delta-heart}. We will prove the second one for the object $\Delta_I(\lambda)$; the case of $\nabla_I(\lambda)$ is similar.

By general properties of graded highest weight categories (see e.g.~\cite[\S 7.4]{riche-hab}), we know that an object $\cF \in \grRep(\tcN_\varnothing)$ admits a filtration whose subquotients are standard objects if and only if
\begin{equation}
\label{eqn:transl1}
\Ext^1_{\grRep(\tcN_\varnothing)} ( \cF, \nabla_\varnothing(\mu)\la n\ra[n] )
= \Hom_{\Db \Coh^{\dot G \times \Gm}(\tcN_\varnothing)} ( \cF, \nabla_\varnothing(\mu)\la n\ra[n+1] ) = 0
\end{equation}
for all $\mu \in \bX$ and all $n \in \Z$.  Moreover, if $\cF$ admits such a filtration, then the number of occurences of a specific standard object $\Delta_\varnothing(\mu)\la n\ra [n]$ as a subquotient in such a filtration is the dimension of
\begin{equation}
\label{eqn:transl2}
\Hom_{\Db \Coh^{\dot G \times \Gm}(\tcN_\varnothing)}(\cF, \nabla_{\varnothing}(\mu)\la n\ra[n]).
\end{equation}

We apply this criterion to $\cF=\Pi^I(\Delta_I(\lambda))$.
As in the proof of Lemma~\ref{lem:Pi-exactness-exotic}
we have
\begin{multline*}
\Hom(\Pi^I(\Delta_I(\lambda)),\nabla_\varnothing(\mu)\langle m \rangle [n])  \cong \Hom(\Delta_I(\lambda),\Pi_I(\nabla_\varnothing(\mu))\langle m-n_I \rangle [n-n_I] ) \\
\cong
\begin{cases}
\Hom(\Delta_I(\lambda), \nabla_I(w\mu)\langle m - \ell(w) \rangle [n - \ell(w)]) & \text{if $w \in W_I$ and $w\mu \in \bXpp_I$;} \\
0 & \text{if $w\mu \notin \bXpp_I$ for all $w \in W_I$.}
\end{cases}
\end{multline*}
Using~\eqref{eqn:hom-exc}, we deduce that $\Hom(\Pi^I(\Delta_I(\lambda)),\nabla_\varnothing(\mu)\langle m \rangle [n])$ vanishes unless $m = n = \ell(w)$ and $\lambda = w \mu$ for some $w \in W_I$, and is $1$-dimensional otherwise.  In particular, we have confirmed~\eqref{eqn:transl1} for $\cF = \Pi^I(\Delta_I(\lambda))$, and we have shown that the space~\eqref{eqn:transl2} is $1$-dimensional if $\mu \in W_I ( \lambda)$ and $m$ is the length of the unique element $w \in W_I$ such that $\mu=w(\lambda)$, and $0$-dimensional otherwise.
\qed
\end{proof}

\begin{lem}
\label{lem:Pi-TilRT}
 For any $\lambda \in \bXpp_I$, the object $\Pi^I(\TilRT_I(\lambda))$ is a tilting object in $\grRep(\tcN_\varnothing)$. It admits $\TilRT_\varnothing(\lambda)$ as a direct summand, and all its other direct summands are of the form $\TilRT_\varnothing(\nu) \langle m \rangle [m]$ with $\nu <' \lambda$ and $\nu \notin W_I(\lambda)$.
\end{lem}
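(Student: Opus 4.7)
The plan is to proceed in four stages, combining t-exactness with the explicit $\Delta$-filtration supplied by Lemma~\ref{lem:translation-Delta}, with only the final step presenting any real difficulty.

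First I would show that $\Pi^I(\TilRT_I(\lambda))$ is tilting in $\grRep(\tcN_\varnothing)$. Since $\TilRT_I(\lambda)$ admits both a $\Delta$-filtration and a $\nabla$-filtration, and $\Pi^I$ is t-exact for the representation-theoretic t-structure by Lemma~\ref{lem:Pi-reptheory}, applying $\Pi^I$ to each filtration and then using Lemma~\ref{lem:translation-Delta} to refine the image of every standard (resp.\ costandard) subquotient into its own $\Delta$-filtration (resp.\ $\nabla$-filtration) yields filtrations of the desired type on $\Pi^I(\TilRT_I(\lambda))$.

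Next I would compute the $\Delta$-multiplicities. Applying Lemma~\ref{lem:translation-Delta} term-by-term gives, for any $\mu \in \bXpp_I$, $w \in W_I$, and $k \in \Z$,
\[
(\Pi^I(\TilRT_I(\lambda)) : \Delta_\varnothing(w\mu)\la k + \ell(w)\ra[k + \ell(w)])
\;=\;
(\TilRT_I(\lambda) : \Delta_I(\mu)\la k\ra[k]).
\]
In particular, for $\mu = \lambda$ this reads $(\Pi^I(\TilRT_I(\lambda)) : \Delta_\varnothing(w\lambda)\la k\ra[k]) = \delta_{k,\ell(w)}$ for every $w \in W_I$. By the construction of $\leq'$ in~\cite[\S9.4]{prinblock}, $w\mu <' \mu$ for $\mu \in \bXpp_I$ and $e \ne w \in W_I$, so $\lambda$ is the unique $\leq'$-maximum in the $\Delta$-support of $\Pi^I(\TilRT_I(\lambda))$. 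Writing the tilting decomposition $\Pi^I(\TilRT_I(\lambda)) \cong \bigoplus_{\nu,m} \TilRT_\varnothing(\nu)\la m\ra[m]^{\oplus n_{\nu,m}}$ and comparing multiplicities of $\Delta_\varnothing(\lambda)\la 0\ra[0]$ (which can only be contributed by the summand $\TilRT_\varnothing(\lambda)$ itself), we force $n_{\lambda,0} = 1$ and $n_{\lambda,m} = 0$ for $m \ne 0$. Set $Q$ to be the direct sum of the remaining summands, so that $\Pi^I(\TilRT_I(\lambda)) \cong \TilRT_\varnothing(\lambda) \oplus Q$ with every summand of $Q$ of the form $\TilRT_\varnothing(\nu)\la m\ra[m]$ with $\nu <' \lambda$.

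The main obstacle is the final stage: ruling out summands $\TilRT_\varnothing(w\lambda)\la m\ra[m]$ in $Q$ for $w \in W_I \setminus \{e\}$. Since the top standard of such a summand is $\Delta_\varnothing(w\lambda)\la m\ra[m]$, Stage~2 forces $m = \ell(w)$. The task then reduces to the positivity statement
\[
(\TilRT_\varnothing(\lambda) : \Delta_\varnothing(w\lambda)\la \ell(w)\ra[\ell(w)]) \;\geq\; 1 \qquad \text{for every } w \in W_I;
\]
indeed, any additional summand $\TilRT_\varnothing(w\lambda)\la\ell(w)\ra[\ell(w)]$ in $Q$ would contribute a further $1$ to the multiplicity of $\Delta_\varnothing(w\lambda)\la\ell(w)\ra[\ell(w)]$, pushing it to at least $2$ and contradicting the value $1$ from Stage~2. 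To establish the positivity, I would transport the question through the equivalence $\Psi$ of~\S\ref{ss:mixed-Gr}: by~\eqref{eqn:Psi-DN} and~\eqref{eqn:Psi-tilting}, together with $\Psi\circ\la 1\ra \cong \la 1\ra[-1]\circ\Psi$, the claim becomes a multiplicity statement in the $\Delta^\mix$-filtration of the indecomposable tilting perverse sheaf $\tilt^\mix_{w_\lambda}$ on $\Gr'$, which can be analysed using the combinatorial identification of $w_{w\lambda}$ relative to $w_\lambda$ in $\Waffmin$ for $w \in W_I$ combined with the well-known structure of tilting parity perverse sheaves on affine Grassmannians. Alternatively, one might apply $\Pi_I$ to the decomposition $\Pi^I(\TilRT_I(\lambda)) = \TilRT_\varnothing(\lambda) \oplus Q$, use the formula for $\Pi_I(\Delta_\varnothing(w\mu))$ implicit in the proof of Lemma~\ref{lem:Pi-exactness-exotic} to pin down the $\Delta$-character of $\Pi_I\Pi^I(\TilRT_I(\lambda))$, and read off the positivity from a character-level comparison; this is the step I expect to require the most care.
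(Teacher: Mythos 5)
Your first two stages (tilting via t-exactness plus Lemma~\ref{lem:translation-Delta}, and the multiplicity computation forcing $n_{\lambda,0}=1$, all other summands $\TilRT_\varnothing(\nu)\la m\ra[m]$ with $\nu\leq'\lambda$, and $m=\ell(w)$ for a putative summand at $\nu=w\lambda$) agree with the paper's proof. The gap is in your final stage. You reduce the exclusion of summands $\TilRT_\varnothing(w\lambda)\la\ell(w)\ra[\ell(w)]$, $w\in W_I\smallsetminus\{e\}$, to the positivity statement $(\TilRT_\varnothing(\lambda):\Delta_\varnothing(w\lambda)\la\ell(w)\ra[\ell(w)])\geq 1$. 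This is not a formal consequence of highest weight theory: it is a statement about the internal structure of the indecomposable tilting object, and under $\Psi$ and the Koszul duality $\kappa$ it amounts to the nonvanishing of an extremal coefficient of a $p$-antispherical Kazhdan--Lusztig polynomial (a graded stalk of an Iwahori--Whittaker parity complex at the stratum $w_\lambda w^{-1}$), which is not ``well-known'' and is not established anywhere in the sources you invoke. Note that it is essentially of the same depth as the isomorphism $\Pi^I(\TilRT_I(\lambda))\cong\TilRT_\varnothing(\lambda)$, which the authors explicitly say they do \emph{not} prove in general and only obtain for $\ell>h$ via representation-theoretic input (Remark~\ref{rmk:Pi^I-tilting}, using translation functors and~\cite[Proposition~E.11]{jantzen}). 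Your fallback (applying $\Pi_I$ and comparing $\Delta$-characters) cannot close this either: character-level data determines total multiplicities, which you already have from Stage~2, but not how the single copy of $\Delta_\varnothing(w\lambda)\la\ell(w)\ra[\ell(w)]$ is distributed among the indecomposable summands.

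The idea you are missing is to use the $\nabla$-side of Lemma~\ref{lem:translation-Delta} simultaneously with the $\Delta$-side, which makes the exclusion purely formal. For $\mu=w\lambda$ with $w\neq e$, the costandard subquotients $\nabla_\varnothing(\mu)\la m\ra[m]$ of $\Pi^I(\TilRT_I(\lambda))$ can only arise from $\nabla_I(\lambda)$ (which occurs in $\TilRT_I(\lambda)$ only at shift $0$), hence only with $m=-\ell(w)<0$, while on the standard side they occur only with $m=\ell(w)>0$. A direct summand $\TilRT_\varnothing(\mu)\la m\ra[m]$ would force a nonzero $\Delta$- \emph{and} a nonzero $\nabla$-multiplicity at the \emph{same} shift $m$, giving $m>0$ and $m<0$ at once --- a contradiction, with no positivity input needed. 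As written, your argument has a genuine unproved step at its crux.
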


\begin{proof}
 It follows from Lemma~\ref{lem:translation-Delta} that the functor $\Pi^I$ sends tilting objects in $\grRep(\tcN_I)$ to tilting objects in $\grRep(\tcN_\varnothing)$. Hence indeed $\Pi^I(\TilRT_I(\lambda))$ is tilting, and Lemma~\ref{lem:translation-Delta} gives us information on the multiplicities $(\Pi^I(\TilRT_I(\lambda)) : \Delta_\varnothing(\nu) \langle m \rangle [m])$. More precisely, using also the construction of the order $\leq'$ (see in particular~\cite[Equation~(9.9)]{prinblock}), this information shows that
 \[
 (\Pi^I(\TilRT_I(\lambda)) : \Delta_\varnothing(\nu) \langle m \rangle [m]) \neq 0 \quad \Rightarrow \quad \nu \leq' \lambda,
 \]
 and that $(\Pi^I(\TilRT_I(\lambda)) : \Delta_\varnothing(\lambda) \langle m \rangle [m])=\delta_{m,0}$. We deduce that $\TilRT_\varnothing(\lambda)$ is a direct summand of $\Pi^I(\TilRT_I(\lambda))$, with multiplicity $1$, and that all the other indecomposable direct summands are of the form $\TilRT_\varnothing(\mu) \langle m \rangle [m]$ with $\mu <' \lambda$. If an object $\TilRT_\varnothing(\mu) \langle m \rangle [m]$ with $\mu \in W_I(\lambda) \smallsetminus \{\lambda\}$ was also a direct summand, then we would have
 \[
  (\Pi^I(\TilRT_I(\lambda)) : \Delta_\varnothing(\mu) \langle m \rangle [m]) \neq 0 \quad \text{and} \quad (\Pi^I(\TilRT_I(\lambda)) : \nabla_\varnothing(\mu) \langle m \rangle [m]) \neq 0.
 \]
The first condition would imply that $m>0$, and the second one that $m<0$; a contradiction.
\qed
\end{proof}

\begin{rmk}{\rm
We expect (but do not prove in general) that in fact $\Pi^I(\TilRT_I(\lambda)) \cong \TilRT_\varnothing(\lambda)$. See
 Remark~\ref{rmk:Pi^I-tilting} for more details.}
\end{rmk}

\section{Proof of Theorem~\ref{thm:main}}
\label{s:mainproof}

\subsection{Reduction to a claim about adverse sheaves}
\label{ss:adverse}

For $\lambda \in \bXpp_I$, set
\[
 X_\lambda := \bigsqcup_{\mu \leq' \lambda} \Gr'_{w_\mu}, \quad U_\lambda := \bigsqcup_{\mu \in W_I (\lambda)} \Gr'_{w_\mu}.
\]
By definition of the order $\leq'$, $X_\lambda$ is a closed subvariety of $\Gr'$, and $U_\lambda$ is open in $X_\lambda$. We will consider the open and closed embeddings
\[
 j_\lambda : U_\lambda \hookrightarrow X_\lambda, \qquad i_\lambda : X_\lambda \smallsetminus U_\lambda \hookrightarrow X_\lambda.
\]

The definition of the mixed derived category in~\cite{modrap2} applies to locally closed unions of $\Iw$-orbits in $\Gr'$ also; in particular we can consider the categories $\Dmix_{(\Iw)}(X_\lambda,\bk)$ and $\Dmix_{(\Iw)}(U_\lambda,\bk)$, these categories possess perverse t-structures, and they are related by functors $(i_\lambda)_*$, $(i_\lambda)^*$, $(i_\lambda)^!$, $(j_\lambda)_*$, $(j_\lambda)_!$, $(j_\lambda)^*$ which satisfy the usual adjunction properties. Moreover, we have a fully faithful and t-exact pushforward functor $\Dmix_{(\Iw)}(X_\lambda,\bk) \to \Dmix_{(\Iw)}(\Gr',\bk)$ associated with the closed embedding $X_\lambda \to \Gr'$, whose essential image contains $\tilt^{\mix}_{w_\lambda}$. Therefore we may (and will) consider this object as belonging to $\Dmix_{(\Iw)}(X_\lambda,\bk)$.

In~\S\ref{ss:proof-adv} below we will prove the following claim.

\begin{lem}
\label{lem:adverse}
 For any $\lambda \in \bXpp_I$, the objects
  $(j_\lambda)_! (j_\lambda)^* \tilt^{\mix}_{w_\lambda}$ and $(j_\lambda)_* (j_\lambda)^* \tilt^{\mix}_{w_\lambda}$
belong to the heart of the adverse t-structure.
\end{lem}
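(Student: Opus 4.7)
The strategy is to give an explicit description of the two objects in question using the structural results of Section~\ref{s:reptheory}, and then finish by transferring the problem to the Whittaker side of Figure~\ref{fig:main}.

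First I would combine Lemma~\ref{lem:translation-Delta} and Lemma~\ref{lem:Pi-TilRT} through the equivalence $\Psi$ (using the compatibility $\Psi \circ \langle 1\rangle[1] \cong \langle 1\rangle \circ \Psi$) to pin down $(j_\lambda)^*\tilt^{\mix}_{w_\lambda}$. Translating Lemma~\ref{lem:translation-Delta} shows that $\Psi\Pi^I(\Delta_I(\lambda))$ admits a $\Delta$-filtration with subquotients $\Delta^{\mix}_{w_{v\lambda}}\langle \ell(v)\rangle$, each appearing once as $v$ runs through $W_I$, and symmetrically $\Psi\Pi^I(\nabla_I(\lambda))$ admits a $\nabla$-filtration with subquotients $\nabla^{\mix}_{w_{v\lambda}}\langle -\ell(v)\rangle$. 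By Lemma~\ref{lem:Pi-TilRT}, $\tilt^{\mix}_{w_\lambda}$ is a distinguished summand of $\Psi\Pi^I(\TilRT_I(\lambda))$, the remaining summands having the form $\tilt^{\mix}_{w_\nu}\langle m\rangle$ with $\nu\notin W_I(\lambda)$ and $\nu <' \lambda$. A combinatorial analysis of the order $\leq'$ (as constructed in~\cite[\S9.4]{prinblock}) should show that these extra summands are supported in $X_\lambda\smallsetminus U_\lambda$, so that after applying $(j_\lambda)^*$ the object $\tilt^{\mix}_{w_\lambda}$ inherits the $\Delta$- and $\nabla$-filtrations above.

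With these filtrations in hand, one half of the lemma is immediate. Since $(j_\lambda)_!$ preserves standard objects, $(j_\lambda)_!(j_\lambda)^*\tilt^{\mix}_{w_\lambda}$ is a successive extension, in the perverse heart of $X_\lambda$, of the objects $\Delta^{\mix}_{w_{v\lambda}}\langle\ell(v)\rangle$ for $v\in W_I$; since $\ell(v)+0 \geq 0$, each such object lies in ${}^a\Dmix^{\leq 0}$, hence so does the whole. Symmetrically $(j_\lambda)_*(j_\lambda)^*\tilt^{\mix}_{w_\lambda}$ lies in ${}^a\Dmix^{\geq 0}$.

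The main obstacle is to establish the complementary inclusions $(j_\lambda)_!(j_\lambda)^*\tilt^{\mix}_{w_\lambda} \in {}^a\Dmix^{\geq 0}$ and $(j_\lambda)_*(j_\lambda)^*\tilt^{\mix}_{w_\lambda} \in {}^a\Dmix^{\leq 0}$, the difficulty being that $(j_\lambda)_!$ does not transport the $\nabla$-filtration into a filtration by costandards on $X_\lambda$. To handle this I would transfer the question to the Whittaker side, passing through the functor $\kappa: \Dmix_{(\Iw)}(\Gr',\bk) \dashrightarrow \Dmix_{\IW}(\Fl,\bk)$ of Figure~\ref{fig:main} composed with the parabolic pushforward $(q_I)_*: \Dmix_{\IW}(\Fl,\bk) \to \Dmix_{\IW}(\Fl_I,\bk)$. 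The essential point is that, because $\lambda \in \bXpp_I$ is $W_I$-regular, the $W_I$-coset $\{w_{v\lambda} : v\in W_I\}$ collapses to a single $\IW$-orbit on $\Fl_I$, so that the analogues of $U_\lambda$ and $X_\lambda$ become respectively an open $\IW$-orbit $\cO$ and its closure in $\Fl_I$. Under this correspondence the two objects in the lemma become (up to Tate twist) the standard and costandard Whittaker perverse sheaves supported on $\overline\cO$, which are automatically perverse by construction. The results of Appendix~\ref{sec:appendix} then allow one to translate this ``perverse on $\Fl_I$'' statement back to the ``adverse on $\Gr'$'' statement, which gives the desired conclusion.
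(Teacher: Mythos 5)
Your first half is fine: identifying $(j_\lambda)_!(j_\lambda)^*\tilt^{\mix}_{w_\lambda}$ with $\Psi(\Pi^I(\Delta_I(\lambda)))$ (this is essentially the triangle comparison the paper performs in~\S\ref{ss:adverse}, run in the other direction) and reading off the filtration by the objects $\Delta^{\mix}_{w_{v\lambda}}\langle \ell(v)\rangle$ does place this object in ${}^a \hspace{-1pt} \Dmix_{(\Iw)}(\Gr',\bk)^{\leq 0}$, and dually for the $*$-extension. The genuine gap is in the step you yourself flag as the main obstacle. You propose to transfer the remaining inclusions through $(q_I)_*\circ\kappa$, but this cannot work: $(q_I)_*$ kills standard and costandard objects for many labels (see Lemma~\ref{lem:pushforward-D-N-Whit-mix}) and does not reflect perversity in any way, so knowing that $(q_I)_*\kappa(X)$ is perverse on $\Fl_I$ gives no control on $\kappa(X)$ itself, which is what adverseness of $X$ amounts to via the t-exact equivalence $\kappa$. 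Moreover the claimed description of the image is incorrect: granting the identification that the proof eventually establishes, the projection formula shows that $(q_I)_* \kappa \bigl( (j_\lambda)_!(j_\lambda)^*\tilt^{\mix}_{w_\lambda} \bigr)$ is a direct sum over $W_I$ of copies of $\Delta^{\IW,I,\mix}_{w_\lambda}$ with shifts $\{m\}$ that are \emph{not} Tate twists; it is neither a single standard object nor perverse.

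What actually makes the transfer work is the pullback $(q_I)^*$, not the pushforward. Since $w_\lambda\in\WaffminI$, Lemma~\ref{lem:pullback-parity} gives $\kappa(\tilt^{\mix}_{w_\lambda})\cong\cE^{\IW,\mix}_{w_\lambda}\cong(q_I)^*\cE^{\IW,I,\mix}_{w_\lambda}\{\ell(w_0^I)\}$, and one must then identify the $\kappa$-image of the open--closed triangle for $\tilt^{\mix}_{w_\lambda}$ with the $(q_I)^*\{\ell(w_0^I)\}$-image of the triangle $\Delta^{\IW,I,\mix}_{w_\lambda}\to\cE^{\IW,I,\mix}_{w_\lambda}\to(i^I_\lambda)_*(i^I_\lambda)^*\cE^{\IW,I,\mix}_{w_\lambda}$; this identification is itself not formal — it needs a Hom-vanishing supplied by Lemma~\ref{lem:pushforward-D-N-Whit-mix} together with the uniqueness statement of~\cite[Proposition~1.1.9]{bbd}. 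One then concludes $\kappa\bigl((j_\lambda)_!(j_\lambda)^*\tilt^{\mix}_{w_\lambda}\bigr)\cong(q_I)^*\Delta^{\IW,I,\mix}_{w_\lambda}\{\ell(w_0^I)\}$. Finally, the perversity of this object is not ``automatic by construction'': already the perversity of the mixed standard and costandard Whittaker objects is a theorem (Proposition~\ref{prop:D-N-perverse-Whit}), and the perversity of their shifted $q_I$-pullbacks is exactly Lemma~\ref{lem:Whit-perverse}, proved via Lemma~\ref{lem:pullback-perverse} in the appendix. This is precisely the technical input your sketch assumes away; without it, or some substitute argument for the inclusions $(j_\lambda)_!(j_\lambda)^*\tilt^{\mix}_{w_\lambda}\in{}^a \hspace{-1pt} \Dmix_{(\Iw)}(\Gr',\bk)^{\geq 0}$ and $(j_\lambda)_*(j_\lambda)^*\tilt^{\mix}_{w_\lambda}\in{}^a \hspace{-1pt} \Dmix_{(\Iw)}(\Gr',\bk)^{\leq 0}$, the proof is incomplete.
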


In the rest of this subsection we show that Lemma~\ref{lem:adverse} implies Theorem~\ref{thm:main}. We fix $\lambda \in \bXpp_I$, and will prove that $\Delta_I(\lambda)$ belongs to $\ExCoh(\tcN_I)$. The case of $\nabla_I(\lambda)$ can be treated similarly.

By general properties of highest weight categories (see e.g.~\cite[Theorem~7.14]{riche-hab}), we have an exact sequence
\[
 \Delta_I(\lambda) \hookrightarrow \TilRT_I(\lambda) \twoheadrightarrow \mathrm{coker}
\]
in $\grRep(\tcN_I)$, where $\mathrm{coker}$ is an extension of objects of the form $\Delta_I(\mu)\langle m \rangle [m]$ with $m \in \Z$ and $\mu \in \bXpp_I$ such that $\mu <' \lambda$. 
Applying the exact functor $\Pi^I$ (see Lemma~\ref{lem:Pi-exactness-exotic}) we deduce an exact sequence 
\begin{equation}
\label{eqn:triangle-Pi^I}
 \Pi^I(\Delta_I(\lambda)) \hookrightarrow \Pi^I(\TilRT_I(\lambda)) \twoheadrightarrow \Pi^I(\mathrm{coker})
\end{equation}
in $\grRep(\tcN_\varnothing)$.
In view of Lemma~\ref{lem:Pi-TilRT}, we can choose an isomorphism
\[
 \Pi^I(\TilRT_I(\lambda)) \cong \TilRT_\varnothing(\lambda) \oplus \cT,
\]
where $\cT$ is a direct sum of objects of the form $\TilRT_\varnothing(\nu) \langle m \rangle [m]$ with $\nu <' \lambda$ and $\nu \notin W_I(\lambda)$. Using Lemma~\ref{lem:translation-Delta} we see that $\Hom(\Pi^I(\Delta_I(\lambda)),\cT)=0$, so that the first arrow in~\eqref{eqn:triangle-Pi^I} factors through an embedding $\Pi^I(\Delta_I(\lambda)) \hookrightarrow \TilRT_\varnothing(\lambda)$ whose cokernel is a direct summand of the third term in~\eqref{eqn:triangle-Pi^I}. In conclusion, we have constructed a distinguished triangle
\begin{equation}
\label{eqn:triangle-Pi^I-2}
 \Pi^I(\Delta_I(\lambda)) \to \TilRT_\varnothing(\lambda) \to \cF \xrightarrow{[1]}
\end{equation}
in $\Db \Coh^{\dot G \times \Gm}(\tcN_\varnothing)$, whose first term belongs to the triangulated subcategory 
generated by the objects of the form $\Delta_\varnothing(\mu) \langle n \rangle$ with $\mu \in W_I ( \lambda)$, and whose third term belongs to the triangulated subcategory generated by the objects of the form $\Delta_\varnothing(\nu) \langle n \rangle$ with $\nu <' \lambda$ and $\nu \notin W_I(\lambda)$.

Applying $\Psi$ to~\eqref{eqn:triangle-Pi^I-2} and using~\eqref{eqn:Psi-tilting} we obtain a distinguished triangle
\begin{equation}
\label{eqn:triangle-proof-1}
 \Psi \bigl( \Pi^I(\Delta_I(\lambda)) \bigr) \to \tilt^{\mix}_{w_\lambda} \to \Psi ( \cF ) \xrightarrow{[1]}.
\end{equation}
The comments above and~\eqref{eqn:Psi-DN} show that all three objects in this triangle are supported on $X_\lambda$, and that
\begin{equation}
 \label{eqn:properties-triangle-1}
 (i_\lambda)^* \Psi \bigl( \Pi^I(\Delta_I(\lambda)) \bigr)=0, \quad (j_\lambda)^* \Psi ( \cF)=0.
\end{equation}

Now, consider the natural distinguished triangle
\begin{equation}
\label{eqn:triangle-proof-2}
 (j_\lambda)_! (j_\lambda)^* \tilt^{\mix}_{w_\lambda} \to \tilt^{\mix}_{w_\lambda} \to (i_\lambda)_* (i_\lambda)^* \tilt^{\mix}_{w_\lambda} \xrightarrow{[1]},
\end{equation}
where the first two arrows are the adjunction maps.
Adjunction and the first equality in~\eqref{eqn:properties-triangle-1} show that the composition of the first arrow in~\eqref{eqn:triangle-proof-1} with the second arrow in~\eqref{eqn:triangle-proof-2} vanishes. In view of~\cite[Proposition~1.1.9]{bbd}, this implies that there exists a unique morphism of triangles
\[
\begin{tikzcd}
\Psi \bigl( \Pi^I(\Delta_I(\lambda)) \bigr) \ar[r] \ar[d] & \tilt^{\mix}_{w_\lambda} \ar[r] \ar[d] & \Psi \bigl( \Pi^I(\mathrm{coker}) \bigr) \ar[r, "{[1]}"] \ar[d] & \ \\
 (j_\lambda)_! (j_\lambda)^* \tilt^{\mix}_{w_\lambda} \ar[r] & \tilt^{\mix}_{w_\lambda} \ar[r] & (i_\lambda)_* (i_\lambda)^* \tilt^{\mix}_{w_\lambda} \ar[r, "{[1]}"] & \ 
\end{tikzcd}
\]
whose middle morphism is the identity. (Here, the upper line is~\eqref{eqn:triangle-proof-1}, and the lower line is~\eqref{eqn:triangle-proof-2}.) Similar considerations using the second property in~\eqref{eqn:properties-triangle-1} produce a morphism of triangles in the reverse direction, and applying the uniqueness claim in~\cite[Proposition~1.1.9]{bbd} to both compositions of these morphisms we see that they are isomorphisms, inverse to each other.

In particular, we have shown that there exists an isomorphism
\[
 \Psi \bigl( \Pi^I(\Delta_I(\lambda)) \bigr) \cong (j_\lambda)_! (j_\lambda)^* \tilt^{\mix}_{w_\lambda}.
\]
By Lemma~\ref{lem:adverse} the right-hand side belongs to the heart of the adverse t-structure. By t-exactness of $\Psi$ (see~\S\ref{ss:mixed-Gr}), it follows that $\Pi^I(\Delta_I(\lambda))$ belongs to the heart of the exotic t-structure. In view of Lemma~\ref{lem:Pi-exactness-exotic}, this shows that $\Delta_I(\lambda)$ belongs to the heart of the exotic t-structure, and finishes the proof of Theorem~\ref{thm:main}.

\subsection{Iwahori--Whittaker mixed derived categories}
\label{ss:IW-Dmix}

The proof of Lemma~\ref{lem:adverse} will require a new ``translation'', via the Koszul duality from~\cite{amrw} (which adapts a characteristic-$0$ construction due to Bezrukavnikov--Yun~\cite{by}), to a setting involving Whittaker parity complexes on affine flag varieties. This will require in particular a passage from the classical topology to the \'etale topology. Since \'etale sheaves are defined only for certain choices of coefficients, in the next two subsections we replace $\bk$ by a large enough finite field of characteristic $\ell$ (or by a finite extension of $\mathbb{Q}_{\ell'}$ for some prime $\ell'$ if $\ell=0$), that we will still denote by $\bk$ for simplicity. Lemma~\ref{lem:adverse} of course makes sense for such coefficients, and this variant will imply the lemma as stated in~\S\ref{ss:adverse}.

The general theory of Whittaker parity complexes on partial flag varieties of Kac--Moody groups is developed in Appendix~\ref{sec:appendix} (after partial results obtained in~\cite{rw, amrw}). In this subsection we explain how to apply these general results in the present case of affine flag varieties.

Let $\F$ be an algebrai\-cally closed field such that $\mathrm{char}(\F)=p>0$ and $p\neq \ell$ (or $p \neq \ell'$ if $\ell=0$). We now consider the connected reductive $\F$-group $\dot G^{\vee}_{\F}$ which is Langlands dual to $\dot G$, and its maximal torus $\dot T^\vee_\F$ and Borel subgroup $\dot B^\vee_\F$, defined as in~\S\ref{ss:mixed-Gr}. We will also denote by $H$ the simply-connected cover of the derived subgroup of $\dot G^{\vee}_{\F}$. We can then consider the groups $H(\F( \hspace{-0.5pt} (\mathrm{t}) \hspace{-0.5pt} ))$ and $H(\F[ \hspace{-0.5pt} [\mathrm{t}] \hspace{-0.5pt} ])$, the Iwahori subgroup $\Iw_H \subset H(\F[ \hspace{-0.5pt} [\mathrm{t}] \hspace{-0.5pt} ])$, and the affine flag variety
\[
 \Fl:=H(\F( \hspace{-0.5pt} (\mathrm{t}) \hspace{-0.5pt} ))\slash\Iw_{H}.
\]
(By our conventions, this ind-variety is connected.)

As in~\cite[\S 11.7]{rw} and~\cite[\S 7.2]{amrw} we can consider the ``Iwahori--Whittaker'' derived category $\Db_{\mathcal{IW}}(\Fl, \bk)$ and its ``mixed'' variant $\Dmix_{\mathcal{IW}}(\Fl, \bk)$ (defined again as an appropriate bounded homotopy category of parity complexes). These categories are defined using the action of an Iwahori subgroup $\Iw^+_H$ associated with a \emph{positive} Borel subgroup in $H$. The orbits of $\Iw^+_H$ on $\Fl$ are parametrized in a natural way by $\WaffCox$ and we denote the orbit associated with $w$ by $\Fl_w$. If we set $\WaffminCox:= \Waffmin \cap \WaffCox$, then the indecomposable parity complexes in $\Db_{\mathcal{IW}}(\Fl, \bk)$ are naturally parametrized by $\WaffminCox \times \Z$, and we will denote by $\cE_w^{\IW}$ the object corresponding to $(w,0)$. The complex concentrated in degree $0$, and with degree-$0$ term $\cE^{\IW}_w$, is an object of $\Dmix_{\mathcal{IW}}(\Fl, \bk)$, which will be denoted $\cE^{\IW,\mix}_w$.

The cohomological shift in the triangulated category $\Db_{\mathcal{IW}}(\Fl, \bk)$ restricts to an autoequivalence of the subcategory of parity complexes, and thus induces an autoequivalence of $\Dmix_{\mathcal{IW}}(\Fl, \bk)$. This autoequivalence will be denoted by $\{1\}$.

To $I\subset S$ we can also associate a parabolic affine flag variety
\[
 \Fl_I:=H(\F( \hspace{-1pt} (\mathrm{t}) \hspace{-1pt} ))\slash\cP^{I}_{H},
\]
where $\cP^{I}_{H} \subset H(\F[ \hspace{-1pt} [\mathrm{t}] \hspace{-1pt} ])$ is the parahoric subgroup corresponding to the parabolic subgroup of $H$ containing the negative Borel subgroup and associated which $I$.
The natural projection $q_I : \Fl \to \Fl_I$ is a smooth, proper morphism. In the same way as for $\Fl$, we can consider the Iwahori--Whittaker derived category $\Db_{\mathcal{IW}}(\Fl_I, \bk)$ and its ``mixed'' variant $\Dmix_{\mathcal{IW}}(\Fl_I, \bk)$. The latter category admits a natural perverse t-structure, defined by the same procedure as in~\cite{modrap2}; see Appendix~\ref{sec:appendix} for details.

\begin{rmk}{\rm
 In Appendix~\ref{sec:appendix}, for completeness we work in the setting of (partial) flag varieties of Kac--Moody groups. It seems very likely that the partial affine flag varieties considered above are special cases of (products of) flag varieties of (untwisted affine) Kac--Moody groups, but we do not know of a reference for this claim. (See, however,~\cite[Chap.~XIII]{kumar} for similar claims when the base field is $\mathbb{C}$, and~\cite[\S 9.f]{pr} for the case $I=\varnothing$.) In any case, all the properties of flag varieties of Kac--Moody groups that we use in Appendix~\ref{sec:appendix} have well-known analogues for affine flag varieties, so that all of our results apply in this setting also.}
\end{rmk}

The orbits of $\Iw_{H}^+$ on $\Fl_I$ are parametrized in a natural way by $\WaffCox / W_I$, or equivalently by the subset of $\WaffCox$ consisting of elements $w$ which are maximal in $w W_I$.  Let $\WaffminI\subset\Waff$ be the subset 
\[
\WaffminI:=\{w\in\Waff \mid w \text{ is maximal in } wW_{I} \text{ and } wv\in\Waffmin \text{ for all } v\in W_{I}\}.
\]
Then it is well known that the orbits that support a nonzero Iwahori--Whittaker local system correspond to the subset $\WaffCox \cap \WaffminI \subset \WaffCox$.
Therefore, the indecomposable parity complexes in $\Db_{\mathcal{IW}}(\Fl_I, \bk)$ are parametrized by $(\WaffCox \cap \WaffminI) \times \Z$ (see~\S\hyperref[ss:Whittaker-appendix]{A.B} for more details). For any $w \in \WaffCox \cap \WaffminI$, we will denote by $\cE_w^{\IW,I}$ the indecomposable parity complex associated with $(w,0)$, and by $\cE_w^{\IW,I,\mix}$ the corresponding object of $\Dmix_{\mathcal{IW}}(\Fl_I, \bk)$. Similarly, for such $w$'s we have standard and costandard objects in $\Dmix_{\mathcal{IW}}(\Fl_I, \bk)$, which will be denoted $\Delta^{\IW,I,\mix}_w$ and $\nabla^{\IW,I,\mix}_w$ respectively.

\begin{rmk}
\label{rmk:max-min}{\rm
 In Appendix~\ref{sec:appendix} we label orbits in $\Fl_I$ by \emph{minimal} representatives in cosets $w W_I$ instead of maximal representatives. Thus, if $w_0^I$ is the longest element in $W_I$, then the orbit labelled by $w$ in the present section would be labelled by $w w_0^I$ with the conventions of Appendix~\ref{sec:appendix}.}
\end{rmk}

The following claim is a counterpart in the present setting of Lemma~\ref{lem:pullback-perverse}, and follows from similar considerations.

\begin{lem}
\label{lem:Whit-perverse}
 For any $w \in \WaffCox \cap \WaffminI$, the objects $(q_I)^* \Delta^{\IW,I,\mix}_w \{\ell(w_0^I)\}$ and $(q_I)^* \nabla^{\IW,I,\mix}_w \{\ell(w_0^I)\}$ belong to the heart of the perverse t-structure.
\end{lem}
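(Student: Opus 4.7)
The plan is to reduce the claim to the smoothness and properness of $q_I$ combined with the general t-exactness of suitably shifted pullback. First I would observe that $q_I \colon \Fl \to \Fl_I$ is a smooth, proper, $\Iw_H^+$-equivariant morphism whose fibers are all isomorphic to $\cP^{I}_{H}/\Iw_{H}$. This latter space is the (finite-type) flag variety of the Levi quotient of $\cP^{I}_{H}$, hence a smooth projective irreducible variety of dimension $\ell(w_0^I)$. In particular, $q_I$ is a Zariski-locally trivial fibration of constant relative dimension $\ell(w_0^I)$.

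Next I would appeal to the Iwahori--Whittaker analogue of Lemma~\ref{lem:pullback-perverse}, which asserts that for such a smooth proper morphism of relative dimension $d$ the functor $(q_I)^*\{d\}$ is t-exact with respect to the perverse t-structures on the corresponding mixed Whittaker derived categories. Since $\Delta^{\IW,I,\mix}_w$ and $\nabla^{\IW,I,\mix}_w$ lie in the perverse heart by the very definition of standard and costandard objects in $\Dmix_{\IW}(\Fl_I,\bk)$, applying $(q_I)^*\{\ell(w_0^I)\}$ to them yields objects of the perverse heart of $\Dmix_{\IW}(\Fl,\bk)$, which is exactly the desired conclusion.

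The main point to verify, and essentially the only nontrivial one, is that the argument of Lemma~\ref{lem:pullback-perverse} transfers verbatim to the Iwahori--Whittaker setting. This is unproblematic: because $q_I$ is $\Iw_H^+$-equivariant, pullback along $q_I$ preserves both Iwahori--Whittaker constructibility and the parity condition, so $(q_I)^*$ lifts to a functor between the relevant categories of Whittaker parity complexes and hence between their bounded homotopy categories. The stratum-wise and parity-theoretic manipulations used in the proof of Lemma~\ref{lem:pullback-perverse} depend only on the geometry of $q_I$ restricted to $\Iw_H^+$-orbits (where it is a trivial fibration with smooth proper fiber of dimension $\ell(w_0^I)$) and are insensitive to imposing Whittaker equivariance. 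This transfer is precisely what Appendix~\ref{sec:appendix} is set up to provide.
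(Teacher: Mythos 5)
Your overall strategy---deferring to Appendix~\ref{sec:appendix}---is the same as the paper's, and the statement you need is indeed exactly what the appendix supplies: Lemma~\ref{lem:pullback-perverse} applied with $K$ equal to the set of finite simple reflections (the Iwahori--Whittaker case) and $J=I$, combined with the labelling conversion of Remark~\ref{rmk:max-min}, \emph{is} Lemma~\ref{lem:Whit-perverse}. The gap lies in how you justify the appeal. Lemma~\ref{lem:pullback-perverse} does not assert that for a smooth proper morphism of relative dimension $d$ the functor $(q_I)^*\{d\}$ is perverse t-exact; it only asserts that the shifted pullbacks of the standard and costandard objects are perverse (and identifies the $*$- and $!$-pullbacks of these particular objects). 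In the mixed derived categories of~\cite{modrap2} there is no off-the-shelf theorem ``smooth pullback shifted by the relative dimension is perverse t-exact'': the perverse t-structure is defined by generation under extensions by standard and costandard objects, so (given Proposition~\ref{prop:D-N-perverse-Whit}) such a t-exactness statement is \emph{equivalent} to the lemma you are trying to prove. Invoking it as a known general principle is circular unless you prove it, and proving it is precisely the content of Lemma~\ref{lem:pullback-perverse}.

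Relatedly, your claim that the proof of Lemma~\ref{lem:pullback-perverse} consists of stratum-wise, parity-theoretic manipulations that ``transfer verbatim'' under Whittaker equivariance misdescribes that proof. Stratum-wise $*$-restrictions only give the easy half (that $(q_J)^*\bigl(\Delta^{J,\mix}_w\{\ell(w_0^J)\}\bigr)$ lies in the non-positive part); the hard half is obtained, in the case $K=\varnothing$, from the identification $(q_J)^!\Delta^{J,\mix}_w\{-\ell(w_0^J)\}\cong \Delta^{\mix}_w \star^{\BKM} \underline{\bk}_{\overline{\Flag_{w_0^J}}}\{\ell(w_0^J)\}$ together with the t-exactness properties of convolution (\cite[Proposition~4.6(2)]{modrap2}) and the comparison of $!$- and $*$-pullbacks from \cite[Lemma~9.4.2]{rw}; and the Whittaker case is then deduced not by rerunning this computation with Whittaker coefficients but by applying the t-exact averaging functor $\Av_K$ (via Lemma~\ref{lem:average-mix-K}) to the $K=\varnothing$ case. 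Note also that the appendix deliberately avoids using smoothness of $q_J$ (which is not known in the Kac--Moody setting), so the smoothness of $q_I$, which is the engine of your proposed argument, is not what makes the appendix work. The repair is simple and is what the paper does: cite Lemma~\ref{lem:pullback-perverse} (in its affine flag variety incarnation, with the labelling of Remark~\ref{rmk:max-min}) directly; the general t-exactness of $(q_I)^*\{\ell(w_0^I)\}$ is then a consequence of the lemma, not an input to it.
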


\subsection{Proof of Lemma~\ref{lem:adverse}}
\label{ss:proof-adv}

Using Lemma~\ref{lem:Whit-perverse}, we can now prove Lemma~\ref{lem:adverse}. In fact we will treat the case of $(j_\lambda)_! (j_\lambda)^* \tilt^{\mix}_{w_\lambda}$; the case of $(j_\lambda)_* (j_\lambda)^* \tilt^{\mix}_{w_\lambda}$ is similar and left to the reader.

We first remark that all the connected components of $\Gr'$ are isomorphic as ind-varieties stratified by the $\Iw$-orbits. Hence we can assume that $\lambda$ belongs to $\Z\fR$, or equivalently that $w_\lambda \in \WaffCox$, or in other words that $\tilt^{\mix}_{w_\lambda}$ is supported on the connected component $(\Gr')^0$ of the base point.

Now, recall the ``Koszul duality'' equivalence
\[
 \kappa : \Dmix_{(\Iw)} \bigl( (\Gr')^0,\bk \bigr) \xrightarrow{\sim} \Dmix_{\mathcal{IW}}(\Fl, \bk)
\]
from~\cite[Theorem~7.4]{amrw}. This equivalence satisfies $\kappa \circ \langle 1 \rangle = \langle 1 \rangle [1] \circ \kappa$, and sends standard, resp.~costandard, perverse sheaves to standard, resp.~costandard, perverse sheaves (in a way compatible with labellings). Hence it is t-exact if $\Dmix_{(\Iw)}((\Gr')^0,\bk)$ is endowed with the adverse t-structure and $\Dmix_{\mathcal{IW}}(\Fl, \bk)$ is endowed with the perverse t-structure. 
By~\cite[Theorem~7.4]{amrw}, it also satisfies
\begin{equation}
\label{eqn:kappa-tilt-E}
 \kappa(\tilt^{\mix}_{w}) \cong \cE_{w}^{\IW,\mix}
\end{equation}
for any $w \in \WaffCox \cap \Waffmin$.

Fix $\lambda \in \bXpp_I$, and recall the distinguished triangle~\eqref{eqn:triangle-proof-2} in $\Dmix_{(\Iw)} \bigl( (\Gr')^0,\bk \bigr)$.
Applying $\kappa$ and using~\eqref{eqn:kappa-tilt-E} we deduce a distinguished triangle 
\begin{equation}
\label{eqn:triangle-kappa}
\kappa((j_\lambda)_!(j_\lambda)^*\tilt^{\mix}_{w_\lambda})\to \cE_{w_\lambda}^{\IW,\mix} \to \kappa((i_\lambda)_*(i_\lambda)^*\tilt^{\mix}_{w_\lambda})\xrightarrow{[1]}.
\end{equation}

On the partial affine flag variety $\Fl_I$, let $(\Fl_I)_{w_\lambda}$ be the $\Iw_{H}^+$-orbit corresponding to $w_\lambda$, and consider the embeddings
\[
j^I_\lambda:(\Fl_I)_{w_\lambda}\hookrightarrow\overline{(\Fl_I)_{w_\lambda}}, \qquad i^I_\lambda:\overline{(\Fl_I)_{w_\lambda}}\smallsetminus(\Fl_I)_{w_\lambda}\hookrightarrow\overline{(\Fl_I)_{w_\lambda}}.
\]
As in~\S\ref{ss:adverse} we have pullback and pushforward functors between mixed derived categories associated with these maps, and
using these functors we obtain a canonical distinguished triangle
\begin{equation}
\label{eqn:triangle-Fl_I}
\Delta^{\IW,I,\mix}_{w_\lambda}\to\cE^{\IW,I,\mix}_{w_\lambda}\to(i_\lambda^{I})_*(i_\lambda^{I})^*\cE^{\IW,I,\mix}_{w_\lambda}\xrightarrow{[1]}.
\end{equation}
Now by~\cite[Lemma~10.2]{prinblock} we have $w_\lambda \in \WaffminI$. Hence by Lemma~\ref{lem:pullback-parity} (see also Remark~\ref{rmk:max-min}) we have an isomorphism $\cE_{w_\lambda}^{\IW,\mix}\cong(q_I)^* \cE_{w_{\lambda}}^{\IW,I,\mix} \{\ell(w_0^I)\}$;
therefore applying the triangulated functor $(q_I)^*\{\ell(w_0^I)\}$ to~\eqref{eqn:triangle-Fl_I} gives a distinguished triangle
\begin{equation}
\label{eqn:triangle-kappa-2}
(q_I)^*\Delta^{\IW,I,\mix}_{w_\lambda}\{\ell(w_0^I)\}\to \cE^{\IW,\mix}_{w_\lambda}\to(q_I)^*(i_\lambda^{I})_*(i_\lambda^{I})^*\cE^{\IW,I,\mix}_{w_\lambda}\{\ell(w_0^I)\}\xrightarrow{[1]}.
\end{equation}

We now want to identify the triangles~\eqref{eqn:triangle-kappa} and~\eqref{eqn:triangle-kappa-2}.
Applying $\kappa^{-1}$ to the composition of the first arrow in~\eqref{eqn:triangle-kappa} and the second arrow in~\eqref{eqn:triangle-kappa-2} gives a map
\[
(j_\lambda)_!(j_\lambda)^*\tilt^{\mix}_{w_\lambda}\to\kappa^{-1}((q_I)^*(i_\lambda^{I})_*(i_\lambda^{I})^*\cE^{\IW,I,\mix}_{w_\lambda}\{\ell(w_0^I)\}).
\]
This map is $0$ by adjunction, since the second term is supported on
$X_\lambda \smallsetminus U_\lambda$. Hence our original composition also vanishes, and as in~\S\ref{ss:adverse} we obtain a unique morphism of triangles
\[
\begin{tikzcd}[column sep=small]
 \kappa((j_\lambda)_!(j_\lambda)^*\tilt^{\mix}_{w_\lambda}) \ar[r] \ar[d] & \cE_{w_\lambda}^{\IW,\mix} \ar[r] \ar[d] & \kappa((i_\lambda)_*(i_\lambda)^*\tilt^{\mix}_{w_\lambda}) \ar[r,"{[1]}"] \ar[d] & \ \\
 (q_I)^*\Delta^{\IW,I,\mix}_{w_\lambda}\{\ell(w_0^I)\} \ar[r] & \cE_{w_\lambda}^{\IW,\mix} \ar[r] & (q_I)^*(i_\lambda^{I})_*(i_\lambda^{I})^*\cE^{\IW,I,\mix}_{w_\lambda}\{\ell(w_0^I)\} \ar[r,"{[1]}"] & \ 
\end{tikzcd}
\]
whose second vertical arrow is the identity. Consider now the space of maps between the first object in~\eqref{eqn:triangle-kappa-2} and the third object in~\eqref{eqn:triangle-kappa}: we have
\begin{multline*}
\Hom_{\Dmix_{\mathcal{IW}}(\Fl, \bk)} \left( (q_I)^*\Delta^{\IW,I,\mix}_{w_\lambda}\{\ell(w_0^I)\}, \kappa((i_\lambda)_*(i_\lambda)^*\tilt^{\mix}_{w_\lambda}) \right) \\
\cong \Hom_{\Dmix_{\mathcal{IW}}( \Fl_I, \bk)} \left( \Delta^{\IW,I,\mix}_{w_\lambda}\{\ell(w_0^I)\}, (q_I)_*\kappa((i_\lambda)_*(i_\lambda)^*\tilt^{\mix}_{w_\lambda}) \right). 
\end{multline*}
By Lemma~\ref{lem:pushforward-D-N-Whit-mix},
the object $(q_I)_*\kappa((i_\lambda)_*(i_\lambda)^*\tilt^{\mix}_{w_\lambda})$ belongs to the triangulated subcategory of $\Dmix_{\mathcal{IW}}( \Fl_I, \bk)$ generated by the objects $\nabla^{\IW,I,\mix}_v$ with $v\neq w_\lambda$; hence this $\Hom$-space vanishes. There is thus a unique morphism of triangles 
from~\eqref{eqn:triangle-kappa-2} to ~\eqref{eqn:triangle-kappa} whose middle arrow is the identity
and, as in~\S\ref{ss:adverse}, applying~\cite[Proposition~1.1.9]{bbd} to both compositions of these morphisms gives that they are inverse isomorphisms. We have finally identified~\eqref{eqn:triangle-kappa} and~\eqref{eqn:triangle-kappa-2}, hence proved in particular that there exists an isomorphism 
\begin{equation}
\label{eqn:adverse-lem}
\kappa((j_\lambda)_!(j_\lambda)^*\tilt^{\mix}_{w_\lambda})\cong (q_I)^*\Delta^{\IW,I,\mix}_{w_\lambda}\{\ell(w_0^I)\}.
\end{equation}

We can finally conclude: Lemma~\ref{lem:Whit-perverse} guarantees that the right-hand side of~\eqref{eqn:adverse-lem} is perverse; hence $(j_\lambda)_!(j_\lambda)^*\tilt^{\mix}_{w_\lambda}$ is adverse, which finishes the proof.

\section{Application: a singular version
of the mixed Finkelberg--Mirkovi\'c conjecture}
\label{s:singular-fm}

In this section we assume that $\bk$ is an algebraic closure of $\F_\ell$.

\subsection{Whittaker perverse sheaves on \texorpdfstring{$\Gr$}{Gr}}

We continue with the notation of~\S\ref{ss:IW-Dmix}. We also denote by $\dot B_\F^{\vee,+}$ the Borel subgroup of $\dot G^\vee_\F$ which is opposite to $\dot B^\vee_\F$ with respect to $\dot T^\vee_\F$. (In other words, as the notation suggests, $\dot B_\F^{\vee,+}$ is the \emph{positive} Borel subgroup of $\dot G^\vee_\F$.) We will now work with the $\F$-version of the affine Grassmannian (in its usual incarnation in terms of cosets for right multiplication), defined by
\[
 \Gr := \dot{G}_\F^{\vee} \bigl(\F( \hspace{-1pt} (\mathrm{t}) \hspace{-1pt} ) \bigr) / \dot{G}_\F^{\vee} \bigl(\F[ \hspace{-1pt} [\mathrm{t}] \hspace{-1pt} ] \bigr).
\]
We have a natural embedding
\[
 \bX = \dot{T}_\F^{\vee} \bigl(\F( \hspace{-1pt} (\mathrm{t}) \hspace{-1pt} ) \bigr) / \dot{T}_\F^{\vee} \bigl(\F[ \hspace{-1pt} [\mathrm{t}] \hspace{-1pt} ] \bigr) \to \Gr,
\]
and we will denote by $L_\lambda$ the image of $\lambda \in \bX$.

Let again $I \subset S$ be a subset, and consider the associated parabolic subgroup
$\dot{P}^{\vee,+}_{\F,I} \subset \dot{G}^{\vee}$ containing $\dot{B}^{\vee,+}_\F$. Let also $\dot{L}^{\vee}_{\F,I}$ be the Levi factor of $\dot{P}^{\vee,+}_{\F,I}$ containing $\dot{T}_\F^\vee$, and set $\dot{U}^{\vee}_{\F,I} := \dot{U}_\F^{\vee} \cap \dot{L}^{\vee}_{\F,I}$, where $\dot{U}_\F^{\vee}$ is the unipotent radical of the (negative) Borel subgroup $\dot{B}_\F^{\vee} \subset \dot{G}_\F^{\vee}$. We set
\[
 Q_I := \mathrm{ev}^{-1} \bigl( (\dot{P}^{\vee,+}_{\F,I})_{\mathrm{u}} \cdot \dot{U}^{\vee}_{\F,I} \bigr),
\]
where $\mathrm{ev} : \dot{G}_\F^{\vee} \bigl(\F[ \hspace{-1pt} [\mathrm{t}] \hspace{-1pt} ] \bigr) \to \dot{G}_\F^{\vee}$ is the natural morphism, and $(\dot{P}^{\vee,+}_{\F,I})_{\mathrm{u}}$ is the unipotent radical of $\dot{P}^{\vee,+}_{\F,I}$. Fixing an Artin--Schreier local system on $\dot{U}^{\vee}_{\F,I}$ and pulling it back to $Q_I$, we obtain a notion of ``Whittaker'' complexes on $\Gr$ (see e.g.~\S\hyperref[ss:Whittaker-appendix]{A.B});\footnote{Here we understand the \'etale derived category of $\bk$-sheaves on $\Gr$ as the direct limit of the categories of $\bk_0$-sheaves, where $\bk_0$ runs over finite subfields of $\bk$.} we denote the corresponding category of parity complexes by $\Par_{\Whit,I}(\Gr,\bk)$, and the associated mixed derived category by $\Dmix_{\Whit,I}(\Gr,\bk)$. (In case $I=\varnothing$, this construction recovers the familiar Iwahori-constructible categories.) As in~\S\ref{ss:IW-Dmix}, this category is endowed with a natural ``perverse'' t-structure, whose heart will be denoted $\Perv^\mix_{\Whit,I}(\Gr,\bk)$, and the corresponding standard and costandard objects are perverse (by an appropriate analogue of Proposition~\ref{prop:D-N-perverse-Whit}). By standard arguments (see e.g.~\cite{modrap2}), this implies that the realization functor (see~\cite[\S 3.1]{bbd} or~\cite[Appendix]{be}) induces an equivalence of categories
\begin{equation}
 \label{eqn:realization-functor}
 \Db \Perv^\mix_{\Whit,I}(\Gr,\bk) \simto \Dmix_{\Whit,I}(\Gr,\bk).
\end{equation}

The orbits of $Q_I$ on $\Gr$ are parametrized in a natural way by $\bX$, where $\lambda$ corresponds to the orbit of $L_\lambda$. To understand the combinatorics of orbits on (partial) affine flag varieties, it is usually simpler to work with the \emph{negative} Iwahori subgroup $\Iw_\F$ (defined as $\mathrm{ev}^{-1}(\dot B_\F^{\vee})$). But here, because we will eventually want to combine our constructions with those of~\cite{ar:agsr} and~\cite[Section~11]{prinblock}, we instead work with the \emph{positive} Iwahori subgroup $\Iw_\F^+$, defined as $\mathrm{ev}^{-1}(\dot B_\F^{\vee,+})$. We now explain how to compare the resulting combinatorics of orbits.

We set
\[
 \Gr'_\F := \dot{G}_\F^{\vee} \bigl(\F[ \hspace{-1pt} [\mathrm{t}] \hspace{-1pt} ] \bigr) \backslash \dot{G}_\F^{\vee} \bigl(\F( \hspace{-1pt} (\mathrm{t}) \hspace{-1pt} ) \bigr) \quad
\text{and} \quad
 \Fl'_\F := \Iw_\F \backslash \dot{G}_\F^{\vee} \bigl(\F( \hspace{-1pt} (\mathrm{t}) \hspace{-1pt} ) \bigr).
\]
Then the $\Iw_\F$-orbits on $\Fl'_\F$ (for the action induced by right multiplication) are parametrized in a natural way by $\Waff$, and those on $\Gr'_\F$ by $\Waffmin$. Consider a ``Cartan'' anti-automorphism of $\dot G^\vee_\F$ which acts as the identity on $\dot T^\vee_\F$ and sends $\dot{B}_\F^{\vee}$ to $\dot{B}_\F^{\vee,+}$. (With the notation of~\cite[Remark~11.3(2)]{prinblock}, this antiautomorphism can be chosen as the composition of the $\F$-version of the automorphism $\varphi$ with the map $g \mapsto g^{-1}$.) This map induces an isomorphism $\Gr'_\F \to \Gr$, which sends the $\Iw_\F$-orbit corresponding to $w_\lambda$ to the $\Iw^+_\F$-orbit of~$L_\lambda$.

\begin{lem}
\label{lem:QI-orbit}
 For $\lambda \in \bX$, the $Q_I$-orbit on $\Gr$ labelled by $\lambda$ supports a nonzero local system which is $Q_I$-equivariant against the pullback of the Artin--Schreier local system iff $\lambda \in -\bXpp_I$.
\end{lem}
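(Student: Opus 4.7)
\emph{Proof sketch.} The plan is to translate the question into a computation of additive characters on pro-unipotent groups, and then carry out routine root-subgroup bookkeeping. Let $\chi \colon Q_I \to \dot U^\vee_{\F,I} \to \mathbb{G}_a$ denote the composition of $\mathrm{ev}$ (followed by projection onto the $\dot U^\vee_{\F,I}$-factor) with the nondegenerate character $\chi_0$ defining the Artin--Schreier local system; nondegeneracy means that $\chi_0$ restricts to a nonzero character on each simple root subgroup $U^\vee_{-\alpha^\vee_s}$ ($s \in I$) of $\dot U^\vee_{\F,I}$. By a standard descent argument for equivariant sheaves, the orbit $Q_I \cdot L_\lambda$ supports a nonzero local system equivariant against $\chi^* \AS$ if and only if $\chi$ is trivial on the stabilizer $S_\lambda := \mathrm{Stab}_{Q_I}(L_\lambda)$. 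So it suffices to show that $\chi|_{S_\lambda} = 0$ if and only if $\lambda \in -\bXpp_I$.

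Next I would describe $S_\lambda$ in terms of root subgroups. Since $L_\lambda = \lambda(\mathrm{t}) \cdot \dot G^\vee_\F(\F[[\mathrm{t}]])$, we have $S_\lambda = Q_I \cap \lambda(\mathrm{t})\,\dot G^\vee_\F(\F[[\mathrm{t}]])\,\lambda(\mathrm{t})^{-1}$. For any coroot $\alpha^\vee$ of $\dot G$ (viewed as a root of $\dot G^\vee_\F$) and any $c \in \F$, the identity $\lambda(\mathrm{t})^{-1} x_{\alpha^\vee}(c\mathrm{t}^n) \lambda(\mathrm{t}) = x_{\alpha^\vee}(c\mathrm{t}^{n - \langle \lambda,\alpha^\vee\rangle})$ shows that $x_{\alpha^\vee}(c\mathrm{t}^n)$ belongs to $\lambda(\mathrm{t})\,\dot G^\vee_\F(\F[[\mathrm{t}]])\,\lambda(\mathrm{t})^{-1}$ precisely when $n \ge \langle \lambda,\alpha^\vee \rangle$; similarly $x_{\alpha^\vee}(c\mathrm{t}^n) \in Q_I$ precisely when $n \ge m_{\alpha^\vee}$, where $m_{\alpha^\vee} = 0$ if $\alpha^\vee$ is a positive coroot not in the $\Z$-span of $\{\alpha^\vee_s : s \in I\}$ or a negative coroot in that span, and $m_{\alpha^\vee} = 1$ otherwise.

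The crucial observation is that $\chi$ kills $x_{\alpha^\vee}(c\mathrm{t}^n)$ whenever $n \ge 1$ (since $\mathrm{ev}$ sends it to the identity), and whenever $n = 0$ but $\alpha^\vee \notin \{-\alpha^\vee_s : s \in I\}$ (either because the $\mathrm{ev}$-image lies in $(\dot P^{\vee,+}_{\F,I})_u$ and is killed by the projection to $\dot U^\vee_{\F,I}$, or because the element lies in a non-simple root subgroup of $\dot U^\vee_{\F,I}$, on which the additive character $\chi_0$ vanishes by virtue of being trivial on the commutator subgroup). Hence only the simple negative root subgroups $U^\vee_{-\alpha^\vee_s}$ with $s \in I$ (for which $m_{-\alpha^\vee_s} = 0$) can contribute to $\chi|_{S_\lambda}$.

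For each such $s$, the intersection $S_\lambda \cap U^\vee_{-\alpha^\vee_s}$ at the level of $n=0$ is nontrivial (and hence $\chi_0$-nonzero by nondegeneracy) precisely when $\max(0, -\langle \lambda,\alpha^\vee_s\rangle) = 0$, i.e.\ when $\langle \lambda,\alpha^\vee_s\rangle \ge 0$. Consequently $\chi|_{S_\lambda} = 0$ if and only if $\langle \lambda,\alpha^\vee_s\rangle < 0$ for every $s \in I$, which is precisely the condition $\lambda \in -\bXpp_I$. The only delicate point is the sign bookkeeping for positive vs.\ negative roots, which becomes straightforward once one recalls that in the paper's conventions $\dot B^\vee_\F$ is the \emph{negative} Borel, so that $\dot U^\vee_{\F,I}$ is generated by root subgroups for negative coroots of $\dot G$ lying in the $\Z$-span of $\{\alpha^\vee_s : s \in I\}$.
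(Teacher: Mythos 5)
Your proposal is correct in outline, but it takes a genuinely different route from the paper. The paper does not compute stabilizers at all: it transports the question to $\Gr'_\F$ via a Cartan anti-automorphism, invokes the general criterion for which orbits support a Whittaker local system (the sets $\KW$ of Appendix~A, ultimately resting on~\cite[Lemma~4.2.1]{by}), so that the question becomes the purely combinatorial statement that $v w_\lambda$ is minimal in $v w_\lambda W_I$ for all $v \in W$, and then proves the equivalence of this condition with $\lambda \in -\bXpp_I$ by length additivity together with~\cite[Lemma~10.2]{prinblock}. Your argument instead does the loop-group computation directly: identify $\mathrm{Stab}_{Q_I}(L_\lambda)$, determine which affine root elements $x_{\alpha^\vee}(c\mathrm{t}^n)$ it contains, and evaluate the character. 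Your root-subgroup bookkeeping is correct (including the signs coming from $\dot B^\vee_\F$ being the negative Borel), and the ``only if'' direction is complete as written, since it only requires exhibiting an element of the stabilizer on which $\chi$ is nonzero. What your approach buys is a self-contained, concrete proof that bypasses the appendix and all affine Weyl group combinatorics; what the paper's approach buys is (i) reuse of machinery it needs anyway, and (ii) byproducts that are cited later: the identities $w_\lambda = w_{w_0^I(\lambda)} w_0^I$ and the (non)minimality statements established inside this proof are explicitly invoked again in the proofs of Lemma~\ref{lem:Av-D-N} and Lemma~\ref{lem:Pi-tilt}(2), so with your proof those facts would have to be supplied separately. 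The one step you should make explicit is the reduction, in the ``if'' direction, of the vanishing of $\chi$ on $S_\lambda = Q_I \cap \lambda(\mathrm{t})\,\dot G^\vee_\F(\F[ \hspace{-1pt} [\mathrm{t}] \hspace{-1pt} ])\,\lambda(\mathrm{t})^{-1}$ to its vanishing on the affine root subgroups (and the $\ker(\mathrm{ev})$/torus part) that $S_\lambda$ contains: since $\chi$ is a homomorphism to $\mathbb{G}_a$ this follows once one knows $S_\lambda$ is generated by (indeed directly spanned by) these subgroups, a standard Iwahori-factorization fact for such congruence-type intersections, but it is genuinely used and is currently only implicit in your phrase ``only the simple negative root subgroups can contribute.''
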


\begin{proof}
 We translate our problem in terms of $\Gr'_\F$ following the principles presented above. We will denote by $Q_I^-$ the analogue of $Q_I$ where the roles of positive and negative roots are switched (so that $Q_I^-$ is the appropriate analogue in the present setting of the subgroup $\UKM^K \cdot \UKM_K^\#$ of~\S\hyperref[ss:Whittaker-appendix]{A.B}). Then we have to show that the $Q_I^-$-orbit labelled with $w_\lambda$ supports a (nonzero) Whittaker local system iff $\lambda \in -\bXpp_I$. By the general considerations in~\S\hyperref[ss:Whittaker-appendix]{A.B}, the latter condition holds iff $v w_\lambda$ is minimal in $v w_\lambda W_I$ for any $v \in W$.
 
 First, we assume that $w_\lambda$ satisfies this property. Then $\ell(vw_\lambda w)=\ell(vw_\lambda) + \ell(w)=\ell(v) + \ell(w_\lambda) + \ell(w)$ for any $v \in W$ and $w \in W_I$. In particular, $w_\lambda w_0^I$ belongs to $\Waffmin$; it must then coincide with $w_{w_0^I(\lambda)}$. In view of~\cite[Lemma~10.2]{prinblock}, this implies that $w_0^I(\lambda)$ belongs to $\bXpp_I$, so that $\lambda \in -\bXpp_I$.
 
 Conversely, assume that $\lambda \in -\bXpp_I$. Then, by the converse implication in~\cite[Lemma~10.2]{prinblock}, $w_{w_0^I(\lambda)} w_0^I$ belongs to $\Waffmin$, hence coincides with $w_\lambda$; moreover we have $\ell(vw_\lambda w)=\ell(v) +\ell(w_\lambda w)=\ell(v) + \ell(w_\lambda) + \ell(w)$ for any $v \in W$ and $w \in W_I$. This implies that $v w_\lambda$ is minimal in $v w_\lambda W_I$ for any $v \in W$, and finishes the proof.
 \qed
\end{proof}

For $\lambda \in -\bXpp_I$,
we will denote by $\cE^{\Whit,I}(\lambda)$, $\cJ_!^{\Whit,I}(\lambda)$, $\cJ_*^{\Whit,I}(\lambda)$, $\cJ_{!*}^{\Whit,I}(\lambda)$, $\cT^{\Whit,I}(\lambda)$ the corresponding normalized indecomposable parity complex, standard mixed perverse sheaf, costandard mixed perverse sheaf, simple mixed perverse sheaf and indecomposable mixed tilting perverse sheaf associated with $\lambda$ respectively; see~\cite{modrap2} for details on these notions. We will also denote by $\cE^{\Whit,I,\mix}(\lambda)$ the object of $\Dmix_{\Whit,I}(\Gr,\bk)$ consisting of the complex with $\cE^{\Whit,I}(\lambda)$ in degree $0$, and $0$ in other degrees. (When $I=\varnothing$, we will sometimes omit the superscripts.)

\subsection{Averaging functor}

We have a natural ``averaging'' functor
\[
 \Av_I : \Db_{\Whit, \varnothing}(\Gr,\bk) \to \Db_{\Whit,I}(\Gr,\bk),
\]
defined by the same procedure as in~\S\hyperref[ss:appendix-averaging]{A.C}.

In the following lemma we use the notion of the ``naive'' quotient of an additive category by a full additive subcategory as in~\S\hyperref[ss:appendix-averaging]{A.C}.

\begin{lem}
\label{lem:AvI-parity}
 The functor $\Av_I$ sends parity complexes to parity complexes. Moreover, it induces an equivalence of additive categories
 \[
  \Par_{\Whit, \varnothing}(\Gr,\bk) 
  \sslash
  \langle \cE(\lambda) : \lambda \notin -\bXpp_I \rangle_{\oplus, \Z} \simto \Par_{\Whit,I}(\Gr,\bk),
 \]
 where $\langle \cE(\lambda) : \lambda \notin -\bXpp_I \rangle_{\oplus, \Z}$ is the full subcategory of $\Par_{\Whit,\varnothing}(\Gr,\bk)$ whose objects are direct sums of shifts of objects of the form $\cE(\lambda)$ with $\lambda \notin -\bXpp_I$.
\end{lem}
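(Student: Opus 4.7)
The plan is to deduce the lemma from the corresponding statement established in Appendix~\ref{sec:appendix} in the general setting of Whittaker parity complexes on partial flag varieties of Kac--Moody groups. To apply that result, we begin by realizing the categories $\Par_{\Whit,J}(\Gr,\bk)$ (for $J \in \{\varnothing, I\}$) within the Kac--Moody framework. Using the Cartan antiautomorphism of $\dot G^\vee_\F$ recalled before Lemma~\ref{lem:QI-orbit}, we identify $\Gr$ with $\Gr'_\F$, which is a partial flag variety of the affine Kac--Moody group attached to $\dot G^\vee_\F$. Under this identification, $\Par_{\Whit,\varnothing}(\Gr,\bk)$ and $\Par_{\Whit,I}(\Gr,\bk)$ become instances of the Whittaker parity categories treated in the appendix, and $\Av_I$ corresponds (up to a cohomological shift) to the Whittaker averaging functor defined there, both being constructed by a push-pull procedure against the Artin--Schreier local system along the action of the appropriate parahoric subgroup.

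The next step is to match the combinatorial indexings. By Lemma~\ref{lem:QI-orbit}, the $Q_I$-orbit on $\Gr$ labelled by $\lambda \in \bX$ supports a nonzero Whittaker local system if and only if $\lambda \in -\bXpp_I$. Under the correspondence $\lambda \leftrightarrow w_\lambda$ sending $\bX$ to $\Waffmin$, this identifies $-\bXpp_I$ with the subset of ``Whittaker-admissible'' orbit labels appearing in the appendix. (See also Remark~\ref{rmk:max-min} for the conversion between minimal- and maximal-length coset representatives.)

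With these identifications in place, the two assertions of the lemma follow directly from the corresponding result in the appendix. The key inputs from the appendix are that the Whittaker averaging functor (i) preserves parity complexes, (ii) annihilates exactly the indecomposable parity complexes supported on orbits with no Whittaker local system, and (iii) induces an equivalence from the resulting additive quotient onto the target Whittaker parity category.

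The main obstacle is administrative rather than geometric: one must verify that the various conventions (positive vs.~negative Iwahori and parahoric subgroups, the shift by $\ell(w_0^I)$ arising from the passage between minimal and maximal representatives, and the normalizations chosen for the indecomposable parity complexes $\cE(\lambda)$ and $\cE^{\Whit,I}(\lambda)$) are consistent between the affine Grassmannian setup used here and the Kac--Moody setup of the appendix. Once this bookkeeping is done, no further geometric input is required.
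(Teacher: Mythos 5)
Your argument is essentially the paper's own proof, which is a one-liner: in view of Lemma~\ref{lem:QI-orbit} (identifying the orbits supporting a Whittaker local system with those labelled by $-\bXpp_I$), the statement is the analogue in this context of Proposition~\ref{prop:equivalence-parity} from the appendix. The only caveat is that the paper deliberately does \emph{not} claim a literal identification of $\Gr$ with a partial flag variety of a Kac--Moody group (see the remark in \S\ref{ss:IW-Dmix}: no reference is known for this), but instead asserts that the appendix arguments carry over verbatim to affine flag varieties; you should phrase the reduction that way rather than as an identification, and note that the averaging functor here matches the one of the appendix on the nose, with no cohomological shift needed.
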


\begin{proof}
In view of Lemma~\ref{lem:QI-orbit}, this statement is the analogue in the present context of Proposition~\ref{prop:equivalence-parity}. 
\qed
\end{proof}

From Lemma~\ref{lem:AvI-parity} we obtain in particular that $\Av_I$ induces a functor
\[
 \Av^{\mix}_I : \Dmix_{\Whit, \varnothing}(\Gr,\bk) \to \Dmix_{\Whit,I}(\Gr,\bk)
\]
on bounded homotopy categories.

\begin{lem}
\label{lem:Av-D-N}
 For $\lambda \in -\bXpp_I$ we have
 \[
  \Av^{\mix}_I(\cJ_!(\lambda)) \cong \cJ^{\Whit,I}_!(\lambda), \quad \Av^{\mix}_I(\cJ_*(\lambda)) \cong \cJ^{\Whit,I}_*(\lambda).
 \]
\end{lem}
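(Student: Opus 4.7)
The plan is to verify that $\Av^{\mix}_I(\cJ_!(\lambda))$ and $\Av^{\mix}_I(\cJ_*(\lambda))$ satisfy the defining properties of $\cJ^{\Whit,I}_!(\lambda)$ and $\cJ^{\Whit,I}_*(\lambda)$ respectively: support on $\overline{\cO^I_\lambda}$ (where $\cO^I_\lambda$ is the $Q_I$-orbit of $L_\lambda$), perversity, and the correct (co)standard restriction to the open orbit. Throughout, the essential tool is Lemma~\ref{lem:AvI-parity}, which realizes $\Av^{\mix}_I$ at the level of parity complexes and hence makes the perverse-sheaf computation tractable.

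First, I would establish an appropriate t-exactness statement for $\Av^{\mix}_I$ with respect to the perverse t-structures on $\Dmix_{\Whit,\varnothing}(\Gr,\bk)$ and $\Dmix_{\Whit,I}(\Gr,\bk)$. The averaging is given by a correspondence $\Gr \leftarrow Q_I \times^{Q_I \cap \Iw^+_\F} \Gr \to \Gr$ whose fibers are affine of constant dimension $\ell(w_0^I)$; combined with smooth base change this yields that $\Av^{\mix}_I$ (after an appropriate shift related to $\ell(w_0^I)$) is t-exact, in parallel with Lemma~\ref{lem:pullback-perverse} for $(q_I)^*$ in the appendix.

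Next, for $\lambda \in -\bXpp_I$, I would check that $\Av^{\mix}_I(\cJ_!(\lambda))$ is supported on $\overline{\cO^I_\lambda}$. This follows because $\cJ_!(\lambda)$ is supported on $\overline{\cO_\lambda}$ (the closure of the $\Iw^+_\F$-orbit), while $\Av^{\mix}_I$ produces an essentially $Q_I$-equivariant object, so its support is contained in the smallest $Q_I$-invariant closed subset containing $\overline{\cO_\lambda}$, which is $\overline{\cO^I_\lambda}$ by the combinatorial description of orbit closures. To identify $\Av^{\mix}_I(\cJ_!(\lambda))$ with $\cJ^{\Whit,I}_!(\lambda)$, I would then compute the $!$-restriction to the open orbit $\cO^I_\lambda$ by base change in the averaging correspondence: this local computation identifies $\Av^{\mix}_I(\cJ_!(\lambda))|_{\cO^I_\lambda}$ with the (shifted) Whittaker local system defining $\cJ^{\Whit,I}_!(\lambda)|_{\cO^I_\lambda}$. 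Together with perversity, support, and the characterization of standard perverse sheaves, this forces $\Av^{\mix}_I(\cJ_!(\lambda)) \cong \cJ^{\Whit,I}_!(\lambda)$. The case of $\cJ_*(\lambda)$ is then handled dually, either by an analogous $*$-restriction argument or by invoking Verdier duality, which intertwines $\Av^{\mix}_I$ with the analogous right-adjoint averaging on the other side and exchanges $\cJ_!$ with $\cJ_*$ on both the Iwahori--Whittaker and $Q_I$-Whittaker sides.

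The main obstacle is the t-exactness in the first step: in the affine (ind-variety) setting one must carefully set up the correspondence defining $\Av^{\mix}_I$ and verify that its fibers behave as claimed, so that the requisite base-change and dimension count go through. The subsequent identification on the open orbit is then essentially a local calculation once the correspondence is understood, and the dual case follows formally.
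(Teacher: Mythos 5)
Your proposal argues as if $\Av^{\mix}_I$ were a genuinely geometric functor on a derived category of sheaves, invoking the averaging correspondence, smooth base change, fiber-dimension counts for t-exactness, and support/stalk computations on orbits. But $\Dmix_{\Whit,I}(\Gr,\bk)$ is by definition $\Kb\Par_{\Whit,I}(\Gr,\bk)$, and $\Av^{\mix}_I$ exists only as the homotopy-category extension of the parity-level functor of Lemma~\ref{lem:AvI-parity}; none of the tools you use (base change, amplitude estimates, "support of the averaged object") apply directly in this setting. In particular, t-exactness of $\Av^{\mix}_I$ for the perverse t-structures on the mixed categories cannot be obtained from a dimension count on fibers: in the mixed world the perverse t-structure is characterized by the standard/costandard objects, so t-exactness of $\Av^{\mix}_I$ is essentially \emph{equivalent} to the statement you are trying to prove (this is exactly how it is deduced in the appendix, cf.\ the proof of Proposition~\ref{prop:D-N-perverse-Whit} from Lemma~\ref{lem:average-mix-K}); your step (1) is therefore circular as stated. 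Relatedly, any argument carried out at the level of ordinary sheaves is blind to the Tate-twist bookkeeping: in general $\Av^{\mix}$ sends a standard object to a standard object only up to a twist $\langle -\ell(w_K)\rangle$ (Lemma~\ref{lem:average-mix-K}), and the fact that no twist occurs here requires the combinatorial input that $w_\lambda$ is minimal in $w_\lambda W_I$ for $\lambda\in-\bXpp_I$ (Lemma~\ref{lem:QI-orbit}), which your method cannot detect.

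There is also a gap in the final identification even at the classical level: perversity, support in the orbit closure, and the correct restriction to the open orbit do \emph{not} characterize $\cJ^{\Whit,I}_!(\lambda)$ --- the intermediate extension (and many other perverse sheaves) satisfy all three. One needs in addition that the $*$-restriction to the boundary vanishes (i.e.\ that the object really is the $!$-extension), equivalently Hom-vanishing against all shifts of objects supported on the boundary; so a further base-change computation would be required, again in the mixed category. The paper's proof avoids all of this: it quotes the affine Grassmannian analogue of Lemma~\ref{lem:average-mix-K}, whose proof is algebraic rather than geometric. There, one first treats the full flag variety (Lemma~\ref{lem:average-mix}): the case of a minimal element is taken from~\cite[Lemma~6.1]{amrw}, and the general case follows by convolving the morphism $\Delta^{\mix}_{\id}\langle-\ell(w_K)\rangle\to\Delta^{\mix}_{w_K}$ of~\cite[Lemma~4.9]{modrap2} with $\Delta^{\mix}_{w^K}$ and observing that the cone is killed by averaging; the partial flag (here: spherical) case is then deduced by commuting $\Av$ with $(q_J)_*$ and using the pushforward computation of Lemma~\ref{lem:pushforward-D-N-Whit-mix}. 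If you want a direct geometric proof along your lines, you would first have to establish the compatibility of the parity-level averaging with the recollement functors of~\cite{modrap2} and redo the base-change arguments there, which is essentially the content of the appendix you would be bypassing.
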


\begin{proof}
These claims are special cases of the analogue in the present context of Lemma~\ref{lem:average-mix-K}. In particular, from the proof of Lemma~\ref{lem:QI-orbit}, for $\lambda\in -\bXpp_I$, $w_\lambda$ is minimal in $w_\lambda W_I$ and so the shifts appearing in the second statement of Lemma~\ref{lem:average-mix-K} are $0$.
\qed
\end{proof}

\subsection{Study of tilting objects in \texorpdfstring{$\ExCoh(\tcN_I)$}{ExCoh}}

We denote by $\Tilt(\ExCoh(\tcN_I))$ the full additive subcategory of $\ExCoh(\tcN_I)$ whose objects are the tilting objects. (This notion does make sense now that Corollary~\ref{cor:hw} is proved.) By the general theory of (graded) highest weight categories, we know that the isomorphism classes of indecomposable objects in this category are in a natural bijection with $\bXpp_I \times \Z$. For any $\lambda \in \bXpp_I$, we will denote by $\tilt_I^{\mathrm{exo}}(\lambda)$ the object associated with the pair $(\lambda,0)$; then the object associated with $(\lambda,n)$ is $\tilt_I^{\mathrm{exo}}(\lambda) \langle n \rangle$. It is also known that the ``realization'' functor
\begin{equation}
\label{eqn:KbTilt-equiv}
\Kb \Tilt(\ExCoh(\tcN_I)) \to \Db \Coh^{\dot G \times \Gm}(\tcN_I)
\end{equation}
provided by~\cite[Proposition~2.2]{amrw} is an equivalence of categories. (To construct this functor one needs to choose a ``filtered version'' of $\Db \Coh^{\dot G \times \Gm}(\tcN_I)$; here we take the filtered version constructed in~\cite[\S 3.1]{bbd}.)

\begin{lem}\phantomsection
\label{lem:Pi-tilt}
 \begin{enumerate}
  \item[\rm (1)]
  \phantomsection\label{it:Pi-Tilt-1}
  For any $\cT$ in $\Tilt(\ExCoh(\tcN_\varnothing))$, the object $\Pi_I(\cT)$ belongs to $\ExCoh(\tcN_I)$, and is tilting therein.
  \item[\rm (2)]
  \phantomsection\label{it:Pi-Tilt-2}
  If $\lambda \notin -\bXpp_I$ we have $\Pi_I(\tilt_\varnothing^{\mathrm{exo}}(\lambda))=0$.  
 \end{enumerate}
\end{lem}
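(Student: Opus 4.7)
The plan is to combine the explicit formulas for $\Pi_I$ on standards and costandards from~\cite[Proposition~9.24]{prinblock} with the adjunction properties $(\Pi_I, \Pi^I)$ and $(\Pi^I \langle -n_I \rangle [-n_I], \Pi_I)$ from Lemma~\ref{lem:Pi-exactness-exotic}. The cited proposition should give that, for $\lambda \in W_I \bXpp_I$ with $\lambda = w(\lambda')$ for $\lambda' \in \bXpp_I$ and $w \in W_I$,
\[
 \Pi_I(\Delta_\varnothing(\lambda)) \cong \Delta_I(\lambda') \langle n_I - \ell(w) \rangle [n_I - \ell(w)],
\]
with a dual formula for $\nabla$-objects (with opposite sign of shift), and zero if $\lambda \notin W_I \bXpp_I$. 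The crucial point is that the cohomological shift vanishes precisely when $\ell(w) = n_I$, i.e., when $w = w_0^I$, i.e., when $\lambda \in -\bXpp_I$.

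I would prove part (2) first. Fix $\lambda \notin -\bXpp_I$, and set $\cT = \tilt_\varnothing^{\mathrm{exo}}(\lambda)$. By the adjunction $\Pi_I \dashv \Pi^I$, the vanishing $\Pi_I(\cT) = 0$ is equivalent to $\Hom^\bullet_{\Db \Coh^{\dot G \times \Gm}(\tcN_\varnothing)}(\cT, \Pi^I(Y)) = 0$ for all $Y$ in a generating family of $\Db \Coh^{\dot G \times \Gm}(\tcN_I)$. Taking $Y = \nabla_I(\mu) \langle n \rangle [m]$ and using that $\Pi^I$ is t-exact for the exotic t-structure (Lemma~\ref{lem:Pi-exactness-exotic}), the object $\Pi^I(\nabla_I(\mu))$ lies in $\ExCoh(\tcN_\varnothing)$. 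Using the tilting property of $\cT$ (so that higher $\Ext$'s against any $\nabla_\varnothing$-filtered object in $\ExCoh(\tcN_\varnothing)$ vanish) together with an exotic-heart analogue of the $\nabla_\varnothing$-filtration from Lemma~\ref{lem:translation-Delta}, the problem reduces to a multiplicity computation. The combinatorial heart is that, for $\lambda \notin -\bXpp_I$, no $\nabla_\varnothing$-weight occurring in $\cT$'s $\nabla$-filtration matches a weight arising from $\Pi^I(\nabla_I(\mu))$; this uses the bijection $-\bXpp_I \to \bXpp_I$, $\lambda \mapsto w_0^I \lambda$, and the construction of the order $\leq'$ from~\cite[\S 9.4]{prinblock}.

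For part (1), by additivity it suffices to treat indecomposable summands. If $\lambda \notin -\bXpp_I$, part~(2) gives $\Pi_I(\tilt_\varnothing^{\mathrm{exo}}(\lambda)) = 0$, which is tilting trivially. For $\lambda \in -\bXpp_I$, the formula above gives $\Pi_I(\Delta_\varnothing(\lambda)) \cong \Delta_I(w_0^I \lambda) \in \ExCoh(\tcN_I)$ with no cohomological shift, and dually for $\nabla$. Applying $\Pi_I$ to the $\Delta$-filtration of $\tilt_\varnothing^{\mathrm{exo}}(\lambda)$ in $\ExCoh(\tcN_\varnothing)$, the subquotients $\Pi_I(\Delta_\varnothing(\mu_i) \langle n_i \rangle)$ either vanish (by part~(2) applied to the lower-order tilting structure) or are honest standards $\Delta_I(w_0^I \mu_i) \langle n_i \rangle$ in $\ExCoh(\tcN_I)$, placing $\Pi_I(\cT)$ in the heart and endowing it with a $\Delta_I$-filtration. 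A parallel argument with the $\nabla$-filtration yields the $\nabla_I$-filtration. Hence $\Pi_I(\cT)$ is tilting, by the standard characterization of tilting via $\Delta$- and $\nabla$-filtrations in a highest weight category.

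The main obstacle is the combinatorial verification underlying part~(2): extracting the exotic-$\ExCoh$ filtration of $\Pi^I(\nabla_I(\mu))$ (Lemma~\ref{lem:translation-Delta} provides it only in the representation-theoretic heart), and showing that for $\lambda \notin -\bXpp_I$ the relevant multiplicities all vanish, requires a careful bookkeeping of cohomological degrees and the interaction of the order $\leq'$ with the $W_I$-action.
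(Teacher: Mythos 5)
Your proof of part~(1) rests on a false termwise claim: that applying $\Pi_I$ to the $\Delta$-flag of $\tilt_\varnothing^{\mathrm{exo}}(\lambda)$ produces subquotients that are ``either zero or honest standards with no cohomological shift.'' By~\cite[Corollary~9.21 and Proposition~9.24]{prinblock}, the vanishing criterion for standards is $\mu \notin W_I \bXpp_I$ (i.e.\ $\mu$ not $I$-regular), not $\mu \notin -\bXpp_I$; for $\mu \in W_I\bXpp_I \smallsetminus (-\bXpp_I)$ the object $\Pi_I(\Delta_\varnothing(\mu))$ is a \emph{nonzero shifted} standard $\Delta_I(\mu')\la j \ra[j]$ with $j<0$ (e.g.\ $j=-n_I$ when $\mu \in \bXpp_I$, cf.\ Lemma~\ref{lem:rt-delta-heart}), hence sits in strictly positive exotic degree and is not in the heart; and such weights genuinely occur in the $\Delta$-flag of $\tilt_\varnothing^{\mathrm{exo}}(\lambda)$ even for $\lambda \in -\bXpp_I$. (Invoking part~(2), a statement about tilting objects, for the standard subquotients is also a category error.) The paper's proof of~(1) treats an arbitrary tilting $\cT$ directly and needs no termwise heart membership: the $\Delta$-flag only shows $\Pi_I(\cT)$ lies in the extension closure of $\Delta_I(\mu)\la n\ra[m]$ with $m\le 0$, the $\nabla$-flag (via Proposition~9.24(1)) that it lies in the extension closure of $\nabla_I(\mu)\la n\ra[m]$ with $m\ge 0$, and the resulting two Hom-vanishing properties force, by~\cite[Lemma~4]{bez:ctm}, that $\Pi_I(\cT)$ is in $\ExCoh(\tcN_I)$ and tilting. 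Note in particular that~(1) does not depend on~(2), whereas your argument makes it depend on it.

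For part~(2) the obstacle you flag at the end is fatal, not technical bookkeeping. After the adjunction and the filtration of $\Pi^I(\nabla_I(\mu))$ from Lemma~\ref{lem:translation-Delta}, the graded pieces of $\Hom^\bullet(\cT,\Pi^I(\nabla_I(\mu))\la \ast\ra[\ast])$ are the multiplicities $(\cT : \Delta_\varnothing(w\mu)\la \ast\ra)$ for $w \in W_I$, and these do \emph{not} all vanish when $\lambda \notin -\bXpp_I$: already the top standard $\Delta_\varnothing(\lambda)$ contributes whenever $\lambda$ is $I$-regular (e.g.\ $\lambda \in \bXpp_I$). So the desired vanishing of $\Pi_I(\tilt_\varnothing^{\mathrm{exo}}(\lambda))$ is not a multiplicity statement at all; it results from cancellation, i.e.\ from connecting maps in these filtrations being isomorphisms (one sees this explicitly for $\dot G$ of rank one with $I=S$ and $\lambda=\varsigma_s$, where $\Pi_I(\cT)$ is an extension of $\Delta_I(\varsigma_s)\la -1\ra[-1]$ and an unshifted standard that must cancel). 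No bookkeeping of weights, degrees and the order $\leq'$ can detect this. The paper's proof of~(2) is structural instead: if $\lambda \notin -\bXpp_I$ then, as in the proof of Lemma~\ref{lem:QI-orbit}, some $v w_\lambda$ admits a reduced expression ending in a simple reflection $s \in I$, and by the construction of exotic tilting objects in~\cite[proof of Corollary~4.2]{mr:etspc}, $\tilt_\varnothing^{\mathrm{exo}}(\lambda)$ is a direct summand of an object killed by $\Pi_I$. Some input of this kind---knowledge of how the tilting objects are built, not merely their standard and costandard multiplicities---is needed to close your argument.
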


\begin{proof}
 (\hyperref[it:Pi-Tilt-1]{1})
 Let $\cT\in\Tilt(\ExCoh(\tcN_\varnothing))$. Then $\cT$ is an extension of objects of the form $\Delta_\varnothing (\lambda)\langle n \rangle$ for $\lambda\in\bX$ and $n\in\Z$. By~\cite[Corollary~9.21 and Proposition~9.24(2)]{prinblock}, we deduce that $\Pi_I(\cT)$ is an extension (in the sense of triangulated categories) of objects of the form $\Delta_I(\mu)\langle n \rangle [m]$ for $\mu\in\bXpp_I$, $n\in\Z$ and $m\in\Z_{\leq 0}$, hence that
 \begin{equation}
 \label{eqn:PiI-tilt-1}
  \Hom(\Pi_I(\cT), \nabla_I(\nu)\langle n \rangle [m])=0 \quad \text{if $m>0$.}
 \end{equation}
Now, $\cT$ is also an extension of objects $\nabla_\varnothing(\lambda)\langle n \rangle$ for $\lambda\in\bX$ and $n\in\Z$. Using now~\cite[Corollary~9.21 and Proposition~9.24(1)]{prinblock}, we deduce similarly that 
\begin{equation}
 \label{eqn:PiI-tilt-2}
  \Hom(\Delta_I(\nu), \Pi_I(\cT)\langle n \rangle [m])=0 \quad \text{if $m>0$.}
 \end{equation}
By~\cite[Lemma~4]{bez:ctm}, the properties~\eqref{eqn:PiI-tilt-1} and~\eqref{eqn:PiI-tilt-2} imply that $\Pi_I(\cT)$ belongs to $\ExCoh(\tcN_I)$, and is tilting therein.
 
 (\hyperref[it:Pi-Tilt-2]{2})
 As observed in the proof of Lemma~\ref{lem:QI-orbit}, if $\lambda \notin -\bXpp_I$ then there exists $v \in W$ such that $vw_\lambda$ is not minimal in $vw_\lambda W_I$, or in other words such that $vw_\lambda$ admits a reduced expression ending with a simple reflection $s \in I$. In view of~\cite[Proof of Corollary~4.2]{mr:etspc} this shows that $\tilt_\varnothing^{\mathrm{exo}}(\lambda)$ is then a direct summand of an object which is killed by $\Pi_I$.
 \qed
\end{proof}

\subsection{Exotic sheaves and mixed perverse sheaves}

Recall now the equivalence $\Psi$ from~\S\ref{ss:mixed-Gr}. Here we will rather consider the variant of this equivalence considered in~\cite[\S 11.3]{prinblock}, which will be denoted
\[
 P : \Dmix_{\Whit, \varnothing}(\Gr,\bk) \simto \Db \Coh^{\dot G \times \Gm}(\tcN_\varnothing).
\]
Note that in~\cite{prinblock} the affine Grassmannian is defined over the complex numbers, while here we work with \'etale sheaves on the $\F$-version of this variety. The fact that these two constructions give rise to equivalent categories follows from the general principles from~\cite[\S 6.1]{bbd}.

We now denote by
\[
\Perv_{\sph}(\Gr,\bk)
\]
the abelian category of $\dot{G}_\F^{\vee} \bigl(\F[ \hspace{-1pt} [\mathrm{t}] \hspace{-1pt} ] \bigr)$-equivariant $\bk$-perverse sheaves on $\Gr$. This category is equipped with a symmetric monoidal structure given by the convolution product $\star$; moreover if we denote by $\Rep^{\textbf{f}}(\dot G)$ the category of finite-dimensional algebraic $\dot G$-modules, then the \emph{geometric Satake equivalence} provides an equivalence of abelian monoidal categories
\[
 \Sat : \bigl( \Perv_{\sph}(\Gr,\bk), \star \bigr) \to \bigl( \Rep^{\textbf{f}}(\dot G), \otimes \bigr);
\]
see~\cite{mv:gld} for the original source and~\cite{br} for a more detailed exposition of the proof. The same considerations as in~\cite[\S 11.2]{prinblock} (see also~\cite[\S 4.4]{bgmrr}) show that the convolution construction also provides an action of the monoidal category  $( \Perv_{\sph}(\Gr,\bk), \star )$ on $\Dmix_{\Whit,I}(\Gr,\bk)$ on the right, which will also be denoted $\star$.

The following theorem is a ``parabolic version'' of the main results of~\cite{ar:agsr} and~\cite{mr:etsps}.

\begin{thm}\label{thm:parabolic-abg}
 There exists an equivalence of triangulated categories
 \[
  P_I : \Dmix_{\Whit,I}(\Gr,\bk) \simto \Db \Coh^{\dot G \times \Gm}(\tcN_I)
 \]
such that 
\begin{enumerate}
 \item[\rm (1)]
 \phantomsection\label{it:thm-PI-shifts}
 there exists an isomorphism of functors $P_I \circ \langle 1 \rangle \cong \langle 1 \rangle [1] \circ P_I$;
 \item[\rm (2)]
 \phantomsection\label{it:thm-PI-D-N}
 for any $\lambda \in -\bXpp_I$ there exist isomorphisms
 \begin{gather*}
  P_I(\cJ_!^{\Whit,I}(\lambda)) \cong \Delta_I(w_0^I (\lambda)), \quad P_I(\cJ_*^{\Whit,I}(\lambda)) \cong \nabla_I(w_0^I (\lambda)), \\
  P_I(\cE^{\Whit,I,\mix}(\lambda)) \cong \tilt^{\mathrm{exo}}_I(w_0^I (\lambda));
 \end{gather*}
 \item[\rm (3)]
 \phantomsection\label{it:thm-PI-Sat}
 for any $\cF$ in $\Dmix_{\Whit,I}(\Gr,\bk)$ and $\cG \in \Perv_{\sph}(\Gr,\bk)$, there exists a bifunctorial isomorphism
 $$P_I(\cF \star \cG) \cong P_I(\cF) \otimes \Sat(\cG);$$
 \item[\rm (4)]
 \phantomsection\label{lem:thm-PI-diag}
 the following diagram commutes up to isomorphism:
\[
\begin{tikzcd}[column sep=large]
  \Dmix_{\Whit, \varnothing}(\Gr,\bk) \ar[r, "P", "\sim"'] \ar[d, "\Av_I^\mix"'] & \Db \Coh^{\dot G \times \Gm}(\tcN_\varnothing) \ar[d, "\Pi_I"] \\
  \Dmix_{\Whit,I}(\Gr,\bk) \ar[r, "P_I", "\sim"'] & \Db \Coh^{\dot G \times \Gm}(\tcN_I).
\end{tikzcd}
\]
\end{enumerate}
\end{thm}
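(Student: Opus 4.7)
The strategy is to reduce the theorem to constructing an additive equivalence between the ``small'' categories $\Par_{\Whit,I}(\Gr,\bk)$ and $\Tilt(\ExCoh(\tcN_I))$, and then passing to bounded homotopy categories. Indeed $\Dmix_{\Whit,I}(\Gr,\bk) = \Kb\Par_{\Whit,I}(\Gr,\bk)$ by definition, while~\eqref{eqn:KbTilt-equiv} provides an equivalence $\Kb\Tilt(\ExCoh(\tcN_I)) \simto \Db\Coh^{\dot G \times \Gm}(\tcN_I)$, available thanks to Corollary~\ref{cor:hw} (hence to Theorem~\ref{thm:main}). So it suffices to construct an additive equivalence $P_I^{\Par} : \Par_{\Whit,I}(\Gr,\bk) \simto \Tilt(\ExCoh(\tcN_I))$ fitting in the diagram of~(\hyperref[lem:thm-PI-diag]{4}), and then apply $\Kb$.

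First I would recall that the equivalence $P$ of~\cite{prinblock} restricts to an additive equivalence $P^{\Par} : \Par_{\Whit,\varnothing}(\Gr,\bk) \simto \Tilt(\ExCoh(\tcN_\varnothing))$ with $P^{\Par}(\cE^{\Whit,\varnothing}(\lambda)) \cong \tilt^{\mathrm{exo}}_\varnothing(\lambda)$; this parity-to-tilting matching underlies the constructions of~\cite{ar:agsr, mr:etsps} on which $P$ is built. By Lemma~\ref{lem:AvI-parity}, $\Par_{\Whit,I}$ is the naive quotient of $\Par_{\Whit,\varnothing}$ by the subcategory $\cN$ of sums of shifts of $\cE^{\Whit,\varnothing}(\lambda)$ for $\lambda \notin -\bXpp_I$, while by Lemma~\ref{lem:Pi-tilt}(\hyperref[it:Pi-Tilt-2]{2}) the functor $\Pi_I$ kills $P^{\Par}(\cN)$. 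Hence $\Pi_I \circ P^{\Par}$ descends to an additive functor $P_I^{\Par}$ via the universal property of the quotient, which by construction makes the diagram in~(\hyperref[lem:thm-PI-diag]{4}) commute on parity/tilting objects. Essential surjectivity of $P_I^{\Par}$ amounts to showing that for each $\mu \in \bXpp_I$, the object $\tilt^{\mathrm{exo}}_I(\mu)$ is a direct summand of $\Pi_I(\tilt^{\mathrm{exo}}_\varnothing(w_0^I(\mu)))$, noting that $w_0^I$ sends positive coroots of $W_I$ to negative ones so that $w_0^I(\mu) \in -\bXpp_I$. This is a multiplicity calculation along the lines of Lemma~\ref{lem:Pi-TilRT}, using~\cite[Proposition~9.24]{prinblock} and the standard filtration of $\tilt^{\mathrm{exo}}_\varnothing(w_0^I(\mu))$ to verify that the highest-weight summand $\tilt^{\mathrm{exo}}_I(\mu)$ appears with multiplicity one. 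Full faithfulness is then obtained by combining the adjunctions $(\Pi_I, \Pi^I\langle -n_I\rangle[-n_I])$ and $(\Pi^I, \Pi_I)$ (noting that $n_I = \ell(w_0^I)$) with a tilting-character analysis in the spirit of Lemma~\ref{lem:translation-Delta}, identifying the ``kernel'' of $\Pi_I$ restricted to $\Tilt(\ExCoh(\tcN_\varnothing))$ with exactly $P^{\Par}(\cN)$.

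Once $P_I^{\Par}$, and hence $P_I$, is constructed, property~(\hyperref[it:thm-PI-shifts]{1}) is inherited from $P$ together with the intertwining of $\{1\}$ and $\langle 1\rangle[1]$ inherent in the parity-tilting matching; property~(\hyperref[lem:thm-PI-diag]{4}) is built in by construction. Property~(\hyperref[it:thm-PI-D-N]{2}) for standards and costandards follows from Lemma~\ref{lem:Av-D-N} combined with the $\Pi_I$-counterparts $\Pi_I(\Delta_\varnothing(\lambda)) \cong \Delta_I(w_0^I(\lambda))$ and $\Pi_I(\nabla_\varnothing(\lambda)) \cong \nabla_I(w_0^I(\lambda))$ for $\lambda \in -\bXpp_I$, deduced from~\cite[Proposition~9.24]{prinblock}, the shifts matching thanks to $n_I = \ell(w_0^I)$; the tilting identification is then automatic from the construction of $P_I^{\Par}$. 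Property~(\hyperref[it:thm-PI-Sat]{3}) is inherited from the analogous property of $P$, since both $\Av_I$ (by construction) and $\Pi_I$ (being $\dot G$-equivariant, hence commuting with tensoring by $\dot G$-modules) commute with the right action of $\Perv_{\sph}(\Gr,\bk)$. The main obstacle is the fullness step in the preceding paragraph: while essential surjectivity is a direct multiplicity bookkeeping, fullness demands a delicate comparison of Hom-spaces between tilting exotic sheaves and is where the highest weight structure on $\ExCoh(\tcN_\varnothing)$ furnished by Theorem~\ref{thm:main} is essential.
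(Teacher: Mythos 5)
Your construction of $P_I$ coincides with the paper's: restrict $P$ to an additive equivalence $\Par_{\Whit,\varnothing}(\Gr,\bk)\simto\Tilt(\ExCoh(\tcN_\varnothing))$, use Lemma~\ref{lem:AvI-parity} together with Lemma~\ref{lem:Pi-tilt} to factor $\Pi_I\circ P'$ through the naive quotient, pass to bounded homotopy categories, and compose with the realization equivalence~\eqref{eqn:KbTilt-equiv} (available thanks to Theorem~\ref{thm:main} and Corollary~\ref{cor:hw}). Where you diverge is in proving that $P_I$ is an equivalence: you want to show directly that the additive functor $\Par_{\Whit,I}(\Gr,\bk)\to\Tilt(\ExCoh(\tcN_I))$ is essentially surjective and fully faithful, and the fullness step---which you yourself flag as the main obstacle---is a genuine gap. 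Identifying the objects of $\Tilt(\ExCoh(\tcN_\varnothing))$ killed by $\Pi_I$ with the image of $\langle \cE(\lambda) : \lambda \notin -\bXpp_I \rangle_{\oplus, \Z}$ is not enough: you need that for tilting objects $\cT,\cT'$ the map from $\Hom(\cT,\cT')$ modulo morphisms factoring through such summands to $\Hom(\Pi_I\cT,\Pi_I\cT')$ is bijective, and your proposed route via the adjunctions reduces this to understanding the wall-crossing-type composite $\Pi^I\Pi_I$ on exotic tilting objects, which neither Lemma~\ref{lem:translation-Delta} nor Lemma~\ref{lem:Pi-TilRT} provides (note that the paper explicitly declines to compute even $\Pi^I(\TilRT_I(\lambda))$ in general outside the large-characteristic setting of Remark~\ref{rmk:Pi^I-tilting}).

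The paper sidesteps this entirely: having defined $P_I$, it computes its effect on standard and costandard objects exactly as you do (Lemma~\ref{lem:Av-D-N} combined with~\cite[Proposition~9.24]{prinblock} gives $P_I(\cJ_!^{\Whit,I}(\lambda))\cong\Delta_I(w_0^I(\lambda))$ and $P_I(\cJ_*^{\Whit,I}(\lambda))\cong\nabla_I(w_0^I(\lambda))$), and then invokes the standard d\'evissage for exceptional collections~\cite[Theorem~6.5]{amrw}: since the Hom-spaces between standards and costandards are known on both sides (see~\eqref{eqn:hom-exc}) and these objects generate, a triangulated functor matching them is automatically an equivalence; indecomposability then yields the parity-to-tilting isomorphism in~(2) for free. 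You already have all the inputs for this argument, so I recommend replacing your additive-level fullness step by it. Finally, properties~(1),~(3) and~(4) are not quite ``built in'': comparing $P_I$ (defined through $\Kb\Tilt$ and realization functors) with $\Pi_I\circ P$, and transporting the convolution compatibility from the additive to the derived level, requires the realization functors to intertwine $\Kb\Pi_I$ with the derived $\Pi_I$ and $P$ with $\Kb P'$; the paper secures this with~\cite[Proposition~2.3]{amrw} together with lifts of $\Pi_I$ and $P$ to filtered versions (using~\cite[Example~A.2]{be} and~\cite[\S 2.5]{ar:frobenius}), a point your sketch omits.
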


\begin{proof}
 The equivalence $P$ restricts to an equivalence of additive categories
 \begin{equation*}
  P^{\prime}: \Par_{\Whit,\varnothing}(\Gr,\bk) \simto \Tilt(\ExCoh(\tcN_\varnothing))
 \end{equation*}
 sending $\cE(\lambda)$ to $\tilt^{\mathrm{exo}}_\varnothing(\lambda)$, see~\cite[Proposition~8.4]{ar:agsr}.
 Lemma~\ref{lem:AvI-parity} and Lemma~\ref{lem:Pi-tilt} imply that $\Pi_I \circ P^{\prime}$ factors through a functor
 \begin{equation*}
 P^{\prime}_I : \Par_{\Whit,I}(\Gr,\bk) \to \Tilt(\ExCoh(\tcN_I)).
\end{equation*}
Passing to bounded homotopy categories and composing with the equivalence~\eqref{eqn:KbTilt-equiv} we deduce a functor
\begin{equation*}
 P_I : \Dmix_{\Whit,I}(\Gr,\bk) \to \Db\Coh^{\dot G \times \Gm}(\tcN_I)
\end{equation*}
which satisfies~(\hyperref[it:thm-PI-shifts]{1}). 

Consider now the diagram
{\small
\[
\begin{tikzcd}[column sep=small]
\Dmix_{\Whit,\varnothing}(\Gr,\bk) \ar[d, "\Av^\mix_I"'] \ar[r, equal] \ar[rrr, bend left=15, "P"]&
\Kb\Par_{(\Iw)}(\Gr,\bk)  \ar[d, "\Kb\Av_I"] \ar[r, "\Kb P^{\prime}", "\sim"'] &
  \Kb\Tilt(\ExCoh(\tcN_{\varnothing})) \ar[d, "\Kb\Pi_{I}"] \ar[r, "\mathrm{real}", "\sim"'] & 
  \Db\Coh^{\dot G \times \Gm}(\tcN_{\varnothing}) \ar[d, "\Pi_I"]  
  \\
\Dmix_{\Whit,I}(\Gr,\bk)  \ar[r, equal] \ar[rrr, bend right=15, "P_I"'] &
\Kb\Par_{\Whit,I}(\Gr,\bk) \ar[r, "\Kb P_{I}^{\prime}"] & \Kb\Tilt(\ExCoh(\tcN_{I})) \ar[r, "\mathrm{real}", "\sim"'] &
  \Db\Coh^{\dot G \times \Gm}(\tcN_I) 
\end{tikzcd}
\]
}where the maps labelled ``real'' are 
the functors~\eqref{eqn:KbTilt-equiv}.
The leftmost square in this diagram commutes by definition, and the middle square commutes by construction of $P_I'$. The rightmost square commutes by~\cite[Proposition~2.3]{amrw}. (To be able to apply this result we need to check that the functor $\Pi_I$ admits a ``lift'' to the filtered versions. This however follows from~\cite[Example~A.2]{be}.)
The bottom part of the diagram commutes by definition of $P_I$. 

We claim that the top part of the diagram also commutes (up to isomorphism). This will again follow from~\cite[Proposition~2.3]{amrw} once we justify that the functor $P$ lifts to filtered versions. (Here, the filtered version of $\Dmix_{\Whit,\varnothing}(\Gr,\bk)$ that we consider is the same as in~\cite[Comments preceding Lemma~2.4]{amrw}; the corresponding realization functor is then the identity.)
In fact, $P$ is defined (see~\cite[\S 7]{ar:agsr}) by applying an additive functor 
\[
P^{0}:\Par_{\Whit,\varnothing}(\Gr,\bk)\to\Coh^{\dot G \times \Gm}(\tcN_{\varnothing})
\] 
termwise to complexes of parity complexes. Using the filtered version of the category $\Dmix_{\Whit,\varnothing}(\Gr,\bk)$ constructed in~\cite[\S 2.5]{ar:frobenius} and that of $\Db\Coh^{\dot G \times \Gm}(\tcN_{\varnothing})$ constructed in~\cite[\S 3.1]{bbd}, we can again apply $P^{0}$ termwise to filtered objects formed from $\Par_{\Whit,\varnothing}(\Gr,\bk)$ to obtain filtered objects formed from $\Coh^{\dot G \times \Gm}(\tcN_{\varnothing})$, and hence obtain the required lift. 
This finishes the justification of the commutativity of the diagram in~(\hyperref[lem:thm-PI-diag]{4}).

For $\lambda \in -\bXpp_I$, by~\cite[Proposition~9.24]{prinblock} we have
 \[
  \Pi_I(\Delta_\varnothing(\lambda)) \cong \Delta_I(w_0^I (\lambda)), \quad \Pi_I(\nabla_\varnothing(\lambda)) \cong \nabla_I(w_0^I (\lambda)).
 \]
 Comparing this with Lemma~\ref{lem:Av-D-N} we deduce that 
 \[
 P_I(\cJ_!^{\Whit,I}(\lambda)) \cong \Delta_I(w_0^I (\lambda)) \quad \text{and} \quad P_I(\cJ_*^{\Whit,I}(\lambda)) \cong \nabla_I(w_0^I (\lambda)).
 \]
 Then standard arguments (see e.g.~\cite[Theorem~6.5]{amrw}) show that $P_I$ is an equivalence of categories. In particular, this functor must send indecomposable objects to indecomposable objects, and the third isomorphism in (\hyperref[it:thm-PI-D-N]{2}) follows.
 
To conclude, it only remains to prove~(\hyperref[it:thm-PI-Sat]{3}). 
By construction of the convolution action of $\Perv_{\sph}(\Gr,\bk)$ on $\Dmix_{\Whit,I}(\Gr,\bk)$, it suffices to construct such an isomorphism when $\cG$ is parity (in addition to being perverse), i.e.~when $\Sat(\cG)$ is a tilting $\dot G$-module. In this case the functor $(-) \otimes \Sat(\cG)$ stabilizes $\Tilt(\ExCoh(\tcN_\varnothing))$ by~\cite[Proposition~4.10]{mr:etspc}.
The equivalence $P$ intertwines the functors $(-) \star \cG$ and $(-) \otimes \Sat(\cG)$ (see~\cite[Proposition~7.2]{ar:agsr}); therefore the same property holds for its restriction $P^{\prime}$. 

The functor $\Pi_I$ clearly commutes with the functors $(-) \otimes \Sat(\cG)$; from this we deduce that $(-) \otimes \Sat(\cG)$ also preserves the subcategory $\Tilt(\ExCoh(\tcN_I))$. Now the functor $\Av_I$ commutes with $(-) \star \cG$ (see e.g.~\cite[(11.1.1)]{rw} for a similar statement); hence the latter functor preserves the kernel of the former, namely the subcategory $\langle \cE(\lambda) : \lambda \notin -\bXpp_I \rangle_{\oplus, \Z}$ of $\Par_{\Whit, \varnothing}(\Gr,\bk)$. By construction of the functor $P_I^{\prime}$ out of $P'$, $\Av_I$ and $\Pi_I$, we finally deduce that this functor also intertwines the functors $(-) \star \cG$ and $(-) \otimes \Sat(\cG)$. And using~\cite[Proposition~2.3]{amrw} once again we deduce~(\hyperref[it:thm-PI-Sat]{3}).
\qed
\end{proof}

\subsection{Relation with representations of reductive groups}
\label{ss:relation}

From now on we fix a connected reductive group $G$ over $\bk$ with simply-connected derived subgroup, and assume that $\ell>h$, where $h$ is the Coxeter number of $G$.
We also fix a maximal torus and a Borel subgroup $T \subset B \subset G$. We then assume that $\dot G$, resp.~$\dot B$, resp.~$\dot T$, is the Frobenius twist of $G$, resp.~$B$, resp.~$T$. (Of course, $\dot G$ is a reductive group that is isomorphic to $G$, but it plays a different conceptual role.) Note that $\ell$ is automatically very good for $\dot G$, so that the assumptions of~\S\ref{ss:notation} hold.

We will identify the lattice of characters of $T$ with $\bX$, in such a way that the composition of the Frobenius morphism $T \to \dot T$ with the character $\lambda \in \bX=X^*(\dot T)$ is the character $\ell \lambda$ of $T$. We will consider the ``dilated and shifted'' action of $\Waff$ on $\bX$ defined by
\[
 w \cdot_\ell \mu = w(\mu+\rho)-\rho, \quad t_\lambda \cdot_\ell \mu = \mu + \ell\lambda
\]
for $w \in W$ and $\lambda,\mu \in \bX$. (Here, $\rho \in \frac{1}{2} \bX$ is as usual the halfsum of the positive roots.)
 
Denote by $\Rep_\varnothing(G)$ the ``extended principal block" of the category $\Rep^{\textbf{f}}(G)$ of finite-dimensional algebraic $G$-modules, that is, the Serre subcategory generated by the simple modules whose highest weight has the form $w \cdot_\ell 0$ with $w \in \Waff$. Here the weight $w \cdot_\ell 0$ is dominant iff $w$ belongs to the subset $\Waffmin \subset \Waff$. In particular, $\Rep_\varnothing(G)$ contains the Weyl and induced modules of highest weight $w \cdot_\ell 0$ for $w \in \Waffmin$, denoted $\weyl(w \cdot_\ell 0)$ and $\coweyl(w \cdot_\ell 0)$ respectively. The corresponding simple object will be denoted $\irr(w \cdot_\ell 0)$.

Given a subset $I\subset S$, we 
let $\Rep_I(G)$ be the Serre subcategory of $\Rep^{\textbf{f}}(G)$ generated by the simple modules whose highest weight has the form $w \cdot_\ell (-\varsigma_I)$ for $w\in \Waff$. This category is a direct summand of $\Rep^{\textbf{f}}(G)$ and is ``singular at $I$" in that the stabilizer of $-\varsigma_I$ under the dot action of $\Waff$ is the subgroup $W_I$ of $W$ generated by $I$. Here, the weight $w \cdot_{\ell} (-\varsigma_I)$ is dominant iff $w$ belongs to $\WaffminI \subset \Waff$ (see~\cite[\S 10.1]{prinblock}).
In particular, $\Rep_I(G)$ contains the Weyl and induced modules of highest weight $w \cdot_\ell (-\varsigma_I)$ for $w \in \WaffminI$, denoted $\weyl(w \cdot_\ell (-\varsigma_I))$ and $\coweyl(w \cdot_\ell (-\varsigma_I))$ respectively, and the corresponding simple module $\irr(w \cdot_\ell (-\varsigma_I))$.

Recall the bijection $\bX \simto \Waffmin$ considered in~\S\ref{ss:mixed-Gr}. As explained in~\cite[Lem\-ma~10.2]{prinblock}, this bijection restricts to a bijection $\bXpp_I \simto \WaffminI$.
Recall also the functor
\[
 \Phi_I : \Db \Coh^{\dot G \times \Gm}(\tcN_I) \to \Db \Rep_I(G)
\]
constructed in~\cite[\S 10.3]{prinblock}. (In the notation of~\cite{prinblock}, we have $\Phi_I := \Omega_I \circ \varkappa_I$.) According to~\cite[Proposition~10.6]{prinblock}, this is a \emph{degrading functor} with respect to $\langle 1 \rangle [1]$: that is, there exists a natural isomorphism $\Phi_I \circ \langle 1 \rangle [1] \simto \Phi_I$, such that $\Phi_I$ induces an isomorphism
\begin{equation}
\label{eqn:Phi_I-degrading}
\bigoplus_{n \in \Z} \Hom_{\Db \Coh^{\dot G \times \Gm}(\tcN_I)}(\cF, \cG\la n\ra[n]) \simto \Hom_{\Db \Rep_I(G)}(\Phi_I(\cF), \Phi_I(\cG))
\end{equation}
for all $\cF, \cG \in \Db \Coh^{\dot G \times \Gm}(\tcN_I)$. Moreover, by~\cite[Proposition~10.3]{prinblock} we have
\begin{equation}
\label{eqn:Phi_I-t-exact}
 \Phi_I(\Delta_I(\lambda)) \cong \weyl(w_\lambda \cdot_{\ell} (-\varsigma_I)), \quad \Phi_I(\nabla_I(\lambda)) \cong \coweyl(w_\lambda \cdot_{\ell} (-\varsigma_I)).
\end{equation}
It is clear from these properties that $\Phi_I$ is t-exact if $\Db \Coh^{\dot G \times \Gm}(\tcN_I)$ is endowed with the representation-theoretic t-structure, and $\Db \Rep_I(G)$ with its tautological t-structure. In particular, this provides a grading on the category $\Rep_I(G)$ in the sense of~\cite[Definition~4.3.1]{bgs} (see also~\cite[Definition~11.5]{prinblock}); in other words, under our present assumptions $\grRep(\tcN_I)$ is a ``graded version'' of $\Rep_I(G)$.

\begin{rmk}
\label{rmk:Pi^I-tilting}{\rm
 In view of~\cite[Theorem~8.16, Remark~8.17 and Proposition~9.25]{prinblock}, the functors $\Phi_\varnothing$ and $\Phi_I$ intertwine the ``geometric translation functors'' $\Pi_I$ and $\Pi^I$ and the usual translation functors for $G$-modules, denoted $T_\varnothing^I$ and $T_I^\varnothing$ in~\cite{prinblock}. Using~\cite[Proposition~E.11]{jantzen}, it follows that in the setting of Lemma~\ref{lem:Pi-TilRT} we in fact have $\Pi^I(\TilRT_I(\lambda)) \cong \TilRT_\varnothing(\lambda)$ under the present assumptions.}
\end{rmk}

\subsection{The singular Mirkovi\'c--Vilonen conjecture}

The following theorem is a ``singular analogue'' of~\cite[Proposition~11.6 and Theorem~11.7]{prinblock}. Here we denote by $\For^{\dot G}_G : \Rep^{\textbf{f}}(\dot G) \to \Rep^{\textbf{f}}(G)$ the restriction functor associated with the Frobenius morphism $G \to \dot G$.

\begin{thm}
\label{thm:singular-fm}
 \begin{enumerate}
  \item[\rm (1)]
  \phantomsection\label{it:product-perv}
  For any $\cF$ in $\Perv^{\mix}_{\Whit,I}(\Gr,\bk)$ and $\cG$ in $\Perv_{\sph}(\Gr,\bk)$, the object $\cF \star \cG$ belongs to $\Perv^{\mix}_{\Whit,I}(\Gr,\bk)$.
  \item[\rm (2)]
  There exists an exact functor
  \[
   \bQ_I : \Perv^{\mix}_{\Whit,I}(\Gr,\bk) \to \Rep_I(G)
  \]
 together with an isomorphism $\varepsilon_I : \bQ_I \simto \bQ_I \circ \langle 1 \rangle$ such that the triple $(\Perv^{\mix}_{\Whit,I}(\Gr,\bk),\bQ_I, \varepsilon_I)$ is a grading on $\Rep_I(G)$. In addition,
 \begin{enumerate}
  \item[\rm (a)] for any $\lambda \in -\bXpp_I$ we have
  \begin{align*}
   \bQ_I(\cJ_!^{\Whit,I}(\lambda)) \cong \weyl(w_{w_0^I(\lambda)} \cdot_{\ell} (-\varsigma_I)), &\quad \bQ_I(\cJ_*^{\Whit,I}(\lambda)) \cong \coweyl(w_{w_0^I(\lambda)} \cdot_{\ell} (-\varsigma_I)), \\
   \bQ_I(\cJ_{!*}^{\Whit,I}(\lambda)) \cong \irr(w_{w_0^I(\lambda)} \cdot_{\ell} (-\varsigma_I)), &\quad \bQ_I(\cT^{\Whit,I}(\lambda)) \cong \tilt(w_{w_0^I(\lambda)} \cdot_{\ell} (-\varsigma_I));
  \end{align*}
  \item[\rm (b)] 
  \phantomsection\label{it:bifunctorial-perv}
  for any $\cF$ in $\Perv^{\mix}_{\Whit,I}(\Gr,\bk)$ and $\cG$ in $\Perv_{\sph}(\Gr,\bk)$, there exists a bifunctorial isomorphism $\bQ_I(\cF \star \cG) \cong \bQ_I(\cF) \otimes \For^{\dot G}_G(\Sat(\cG))$.
 \end{enumerate}
 \end{enumerate}
\end{thm}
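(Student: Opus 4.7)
The plan is to follow the strategy of the non-parabolic case treated in~\cite[Proposition~11.6, Theorem~11.7]{prinblock}, defining
\[
 \bQ_I := \Phi_I \circ P_I,
\]
where $P_I$ is the equivalence of Theorem~\ref{thm:parabolic-abg} and $\Phi_I$ is the degrading functor recalled in~\S\ref{ss:relation}. The first step is to verify that $P_I$ matches the perverse t-structure on $\Dmix_{\Whit,I}(\Gr,\bk)$ with the representation-theoretic t-structure on $\Db \Coh^{\dot G \times \Gm}(\tcN_I)$: indeed, by Theorem~\ref{thm:parabolic-abg}(\hyperref[it:thm-PI-shifts]{1})--(\hyperref[it:thm-PI-D-N]{2}), the Tate twist $\langle 1 \rangle$ on the Whittaker side corresponds to $\langle 1 \rangle[1]$ on the coherent side, and the generators $\cJ_!^{\Whit,I}(\lambda)\langle n\rangle[m]$ of the perverse $^{\leq 0}$ part map to $\Delta_I(w_0^I(\lambda))\langle n\rangle[n+m]$, matching the defining generators of $\grRep(\tcN_I)^{\leq 0}$. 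Combined with the t-exactness of $\Phi_I$ for the representation-theoretic and tautological t-structures (see~\S\ref{ss:relation}), this shows that $\bQ_I$ restricts to an exact functor on hearts.

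To produce $\varepsilon_I$ and verify the grading axioms, I would combine Theorem~\ref{thm:parabolic-abg}(\hyperref[it:thm-PI-shifts]{1}) with the natural isomorphism $\Phi_I \circ \langle 1\rangle[1] \simto \Phi_I$, yielding $\bQ_I \circ \langle 1 \rangle \simto \bQ_I$. The required $\Hom$-space isomorphism is then transported from~\eqref{eqn:Phi_I-degrading} via $P_I$, using that $\langle 1 \rangle[1]$ is t-exact for the representation-theoretic t-structure so that the relevant $\Hom$'s in $\Db \Coh^{\dot G \times \Gm}(\tcN_I)$ reduce to $\Hom$'s in $\grRep(\tcN_I)$. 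The identifications in~(2)(a) on $\cJ_!^{\Whit,I}$ and $\cJ_*^{\Whit,I}$ are then immediate from Theorem~\ref{thm:parabolic-abg}(\hyperref[it:thm-PI-D-N]{2}) and~\eqref{eqn:Phi_I-t-exact}; the identifications on simples follow from exactness and faithfulness; the identifications on tiltings follow because an exact functor between highest weight categories that sends standards to standards and costandards to costandards preserves tilting objects (with the correct highest weight readable off the standard filtration).

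For parts~(\hyperref[it:product-perv]{1}) and~(\hyperref[it:bifunctorial-perv]{b}), the main tool is Theorem~\ref{thm:parabolic-abg}(\hyperref[it:thm-PI-Sat]{3}), which provides a bifunctorial isomorphism $P_I(\cF \star \cG) \cong P_I(\cF) \otimes \Sat(\cG)$. For~(1), we need to show that $(-) \otimes V$ preserves $\grRep(\tcN_I)$ for every $\dot G$-module~$V$. Using the projection-formula identity $\Pi^I(X \otimes V) \cong \Pi^I(X) \otimes V$ together with the fact that $\Pi^I$ is t-exact for the representation-theoretic t-structure and kills no nonzero object (Lemma~\ref{lem:Pi-exactness-exotic}), this reduces to the corresponding non-parabolic statement from~\cite[Proposition~11.6]{prinblock}. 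Part~(b) then follows by applying $\Phi_I$ to the isomorphism of Theorem~\ref{thm:parabolic-abg}(\hyperref[it:thm-PI-Sat]{3}) and using the natural isomorphism $\Phi_I(X \otimes V) \cong \Phi_I(X) \otimes \For^{\dot G}_G(V)$, which is inherent to the construction of $\Phi_I$ recalled in~\cite[\S 10.3]{prinblock} (the analogue for $\Phi_\varnothing$ is used in the non-parabolic case in~\cite{prinblock}).

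The step I expect to require the most care is the t-exactness of $(-) \otimes V$ with respect to the representation-theoretic t-structure for a general (non-tilting) $\dot G$-module~$V$. The tilting case is handled in the proof of Theorem~\ref{thm:parabolic-abg}(\hyperref[it:thm-PI-Sat]{3}) via~\cite[Proposition~4.10]{mr:etspc}, but the general case requires either a resolution by tilting modules together with a bookkeeping argument on the vanishing of higher cohomology in the relevant range, or the reduction to the non-parabolic case via the conservative and t-exact functor $\Pi^I$ outlined above. Once this is settled, the remainder of the argument is a formal consequence of Theorem~\ref{thm:parabolic-abg} together with the properties of the degrading functor $\Phi_I$.
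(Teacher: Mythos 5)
Your overall strategy coincides with the paper's: define $\bQ_I := \Phi_I \circ P_I$, transport the t-structure through $P_I$ using Theorem~\ref{thm:parabolic-abg}(1)--(2), obtain $\varepsilon_I$ and the degrading isomorphism from~\eqref{eqn:Phi_I-degrading} (note that to convert this into the required $\Ext$-isomorphism between $\Perv^\mix_{\Whit,I}(\Gr,\bk)$ and $\Rep_I(G)$ one also needs the realization equivalence~\eqref{eqn:realization-functor}, which you should invoke explicitly), and deduce the standard, costandard and simple identifications exactly as in the paper. The one genuine gap is the tilting identification in~(2)(a): the fact that an exact functor sending standards to standards and costandards to costandards sends tilting objects to tilting objects only shows that $\bQ_I(\cT^{\Whit,I}(\lambda))$ is a tilting module admitting $\tilt(w_{w_0^I(\lambda)} \cdot_\ell (-\varsigma_I))$ as a direct summand with the correct multiplicity of the top standard factor; it does not rule out extra summands $\tilt(\mu)$ with smaller highest weight, since the standard-filtration multiplicities of an indecomposable graded tilting do not force its degraded image to be indecomposable. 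The paper closes this by using the degrading $\Hom$-isomorphism to identify $\End_{\Rep_I(G)}(\bQ_I(\cT^{\Whit,I}(\lambda)))$ with the graded endomorphism ring of $\cT^{\Whit,I}(\lambda)$, which is local by the Gordon--Green theorem~\cite[Theorem~3.1]{gg}; you have all the ingredients for this argument but your stated justification, as written, would fail.

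For parts~(1) and~(2)(b) your route differs from the paper's and is viable: you reduce the preservation of $\grRep(\tcN_I)$ under $(-)\otimes\Sat(\cG)$ to the non-parabolic case via the projection-formula identity $\Pi^I(X \otimes V) \cong \Pi^I(X) \otimes V$ together with the t-exactness and conservativity of $\Pi^I$ (Lemma~\ref{lem:Pi-exactness-exotic}), and then transport through $P_I$. The paper instead never proves the coherent-side statement separately: it combines Theorem~\ref{thm:parabolic-abg}(3) with the compatibility isomorphisms of~\cite[Theorems~1.1--1.2]{prinblock} to get $\bQ_I(\cF\star\cG)\cong\bQ_I(\cF)\otimes\For^{\dot G}_G(\Sat(\cG))$ in $\Db\Rep_I(G)$, where tensoring with a finite-dimensional module is trivially exact, and then deduces perversity of $\cF\star\cG$ from the t-exactness and conservativity of $\bQ_I$ itself. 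The paper's version is slightly more economical (it needs no input about $(-)\otimes V$ on $\tcN_\varnothing$ beyond what is already packaged in $\bQ_I$), while yours stays on the coherent side and leans on the non-parabolic result of~\cite[Proposition~11.6]{prinblock}; both are correct, and your worry about non-tilting $V$ is resolved by either reduction, with no need for tilting resolutions.
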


\begin{proof}
 The proof is the same as for~\cite[Theorem~11.7]{prinblock} but for completeness we repeat it. 
 We first construct a functor $\bQ_I : \Dmix_{\Whit,I}(\Gr,\bk) \to \Db\Rep_I(G)$ as the composition $\Phi_I\circ P_I$. Using the isomorphism from Theorem~\ref{thm:parabolic-abg}(\hyperref[it:thm-PI-shifts]{1}) and the natural isomorphism $\Phi_I \circ \langle 1 \rangle [1] \simto \Phi_I$ we obtain an isomorphism $\varepsilon_I : \bQ_I \simto \bQ_I \circ \langle 1 \rangle$. 
By the first two isomorphisms in Theorem~\ref{thm:parabolic-abg}(\hyperref[it:thm-PI-D-N]{2}) and~\eqref{eqn:Phi_I-t-exact}, we also obtain isomorphisms
\begin{equation}
\label{eqn:Q_I-t-exact}
\bQ_I(\cJ_!^{\Whit,I}(\lambda)) \cong \weyl(w_{w_0^I(\lambda)} \cdot_{\ell} (-\varsigma_I)), \quad \bQ_I(\cJ_*^{\Whit,I}(\lambda)) \cong \coweyl(w_{w_0^I(\lambda)} \cdot_{\ell} (-\varsigma_I))
\end{equation}
for any $\lambda \in -\bXpp_I$.
In particular, this shows that the complexes $\bQ_I(\cJ_!^{\Whit,I}(\lambda))$ and $\bQ_I(\cJ_*^{\Whit,I}(\lambda))$ belong to $\Rep_I(G)$, which implies that $\bQ_I$ is t-exact if the category $\Dmix_{\Whit,I}(\Gr,\bk)$ is endowed with the perverse t-structure and $\Db\Rep_I(G)$ with its tautological t-structure. We will still denote by $\bQ_I$ the restriction of this functor to the hearts of these t-structures; this functor provides the wished-for exact functor $\Perv^{\mix}_{\Whit,I}(\Gr,\bk) \to \Rep_I(G)$. By~\eqref{eqn:realization-functor}, Theorem~\ref{thm:parabolic-abg}, and~\eqref{eqn:Phi_I-degrading}, this functor induces isomorphisms
\begin{equation}
 \label{eqn:Q_I-degrading}
 \bigoplus_{n \in \Z} \Ext^{k}_{\Perv^{\mix}_{\Whit,I}(\Gr,\bk)}(\cF, \cG\la n\ra) \simto \Ext^{k}_{\Rep_I(G)}(\bQ_I(\cF), \bQ_I(\cG))
 \end{equation}
 for any $\cF$ and $\cG$ in $\Perv^{\mix}_{\Whit,I}(\Gr,\bk)$ and any $k \in \Z$. In particular, this implies that $\bQ_I$ is faithful.
Now $\cJ_{!*}^{\Whit,I}(\lambda)$ is the image of any nonzero morphism $\cJ_!^{\Whit,I}(\lambda)\to\cJ_*^{\Whit,I}(\lambda)$ and $\irr(w_{w_0^I(\lambda)} \cdot_{\ell} (-\varsigma_I))$ is the image of any nonzero morphism $\weyl(w_{w_0^I(\lambda)} \cdot_{\ell} (-\varsigma_I))\to\coweyl(w_{w_0^I(\lambda)} \cdot_{\ell} (-\varsigma_I))$. Combining these facts with~\eqref{eqn:Q_I-t-exact} gives that $\bQ_I(\cJ_{!*}^{\Whit,I}(\lambda))\cong\irr(w_{w_0^I(\lambda)} \cdot_{\ell} (-\varsigma_I))$, hence finally that $(\Perv^{\mix}_{(\Whit,I)}(\Gr,\bk),\bQ_I, \varepsilon_I)$ gives a grading on $\Rep_I(G)$.

As $\bQ_I$ is exact and given the isomorphisms~\eqref{eqn:Q_I-t-exact}, one sees that, for any $\lambda \in -\bXpp_I$, $\bQ_I(\cT^{\Whit,I}(\lambda))$ is a tilting $G$-module, and that it admits $\tilt(w_{w_0^I(\lambda)} \cdot_{\ell} (-\varsigma_I))$ as a direct summand. The isomorphism~\eqref{eqn:Q_I-degrading} provides a ring isomorphism
\begin{equation}
\bigoplus_{n \in \Z} \Hom_{\Perv^{\mix}_{(\Whit,I)}(\Gr,\bk)} \Bigl( \cT^{\Whit,I}(\lambda), \cT^{\Whit,I}(\lambda)\la n\ra \Bigr) \simto \End_{\Rep_I(G)}(\bQ_I(\cT^{\Whit,I}(\lambda))).
\end{equation}
Since the left-hand side is local by~\cite[Theorem~3.1]{gg}, this shows that
$\bQ_I(\cT^{\Whit,I}(\lambda))$ is indecomposable, and so isomorphic to $\tilt(w_{w_0^I(\lambda)} \cdot_{\ell} (-\varsigma_I))$.

By Theorem~\ref{thm:parabolic-abg}(\hyperref[it:thm-PI-Sat]{3}), for $\cF$ in $\Dmix_{\Whit,I}(\Gr,\bk)$ and $\cG$ in $\Perv_{\sph}(\Gr,\bk)$ we have
\[
\bQ_I(\cF \star \cG) \cong \Phi_I(P_I(\cF) \otimes \Sat(\cG))
\]
Combining this with the isomorphisms in~\cite[Theorem~1.1 and Theorem~1.2]{prinblock}, we deduce
a bifunctorial isomorphism
\[
\bQ_I(\cF \star \cG) \cong \bQ_I(\cF) \otimes \For^{\dot G}_G(\Sat(\cG))
\]
in $\Db\Rep_I(G)$.
If $\cF$ is in $\Perv^{\mix}_{\Whit,I}(\Gr,\bk)$ then this implies that $\bQ_I(\cF \star \cG)$ belongs to $\Rep_I(G)$. Since $\bQ_I$ is t-exact and does not kill any nonzero object, this in turn implies that $\cF \star \cG$ is perverse, which proves Points~(\hyperref[it:product-perv]{1}) and~(\hyperref[it:bifunctorial-perv]{2b}), and finishes the proof.
\qed
\end{proof}

\appendix{ A}{Whittaker mixed perverse sheaves on partial flag varieties}
\label{sec:appendix}


\renewcommand{\theequation}{A.\arabic{equation}}
\renewcommand{\thelem}{A.\arabic{lem}}

In this appendix we assume that the reader is familiar (to a certain extent at least) with the theory of parity complexes (from~\cite{jmw}) and of mixed derived categories (from~\cite{modrap2}). Our aim is to study such objects and categories in the case of Whittaker sheaves on partial flag varieties of Kac--Moody groups. The case of Bruhat-constructible sheaves on partial flag varieties (corresponding, in the notation used below, to the case when $K=\varnothing$) is known, mainly from~\cite{jmw, modrap2}, as is that of Whittaker sheaves on the full flag variety (corresponding to the case $J=\varnothing$), mainly from~\cite{rw, amrw}. The general case will usually be deduced from one of these special cases.

\subsection{Notation}

In this section we consider the setting of~\cite[Part III]{rw} or~\cite[\S\S6.1--6.2]{amrw}. In particular, we consider an algebraically closed field $\F$ of characteristic $p>0$ and a Kac--Moody root datum $(I,\bX,\{\alpha_i\}_{i \in I}, \{\alpha^\vee_i\}_{i \in I})$. (Note that the symbol ``$I$'' used here is unrelated to the set $I$ considered in the body of the paper.) Let $\GKM$ be the associated Kac--Moody group over $\F$ in the sense of Mathieu. We also denote by $\BKM \subset \GKM$ the Borel subgroup, and by $\Flag:=\GKM/\BKM$ the associated flag variety. (See~\cite[\S 9.1]{rw} for a reminder on this construction, and for references to the original sources.) If $W$ is the Weyl group of $\GKM$, and if $S \subset W$ are the simple reflections (in canonical bijection with $I$), then we have a decomposition into $\BKM$-orbits
\[
 \Flag = \bigsqcup_{w \in W} \Flag_w,
\]
where $\Flag_w$ is a locally closed subvariety isomorphic to an affine space of dimension $\ell(w)$.

For any subset $J \subset I$ of finite type we also have a partial flag variety $\Flag^J$. We will denote by $W_J \subset W$ the (finite) subgroup generated by the simple reflections corresponding to elements in $J$, and by $W^J \subset W$ the subset of elements $w$ such that $w$ is minimal in $w W_J$. Then we have a stratification
\[
 \Flag^J = \bigsqcup_{w \in W^J} \Flag_w^J
\]
with $\Flag_w^J \cong \mathbb{A}^{\ell(w)}$.
We also have a natural proper morphism of ind-schemes $q_J : \Flag \to \Flag^J$. For any $w \in W^J$ and $v \in W_J$, we have $q_J(\Flag_{wv})=\Flag^J_w$, and the morphism $\Flag_{wv} \to \Flag_w^J$ induced by $q_J$ identifies with the natural projection from $\mathbb{A}^{\ell(w)+\ell(v)}$ to $\mathbb{A}^{\ell(w)}$. (Note however that it is not known---at least to us---if $q_J$ is a smooth morphism in general; see~\cite[Remark~9.2.1]{rw} for details on this question.)

\subsection{Whittaker derived categories}
\label{ss:Whittaker-appendix}

We let $\ell$ be a prime number different from $p$, and $\bk$ be either a finite field of characteristic $\ell$, or a finite extension of $\Ql$.
Then it makes sense to consider \'etale $\bk$-sheaves on $\Flag^J$ (for $J \subset I$ of finite type).
 We will assume that $\bk$ contains a primitive $p$-th root of unity; after fixing a choice of such a root of unity we can consider the associated Artin--Schreier local system $\AS$ on $\mathbb{G}_{\mathbf{a},\F}$.

Let now $K \subset I$ be another subset of finite type, and consider the associated parabolic subgroup $\PKM_K$ of $\GKM$, and its pro-unipotent radical $\UKM^K$. Let also $\LKM_K \subset \PKM_K$ be the Levi subgroup, and $\UKM_K^\# \subset \LKM_K$ be the unipotent radical of the Borel subgroup of $\LKM_K$ which is opposite to $\BKM \cap \LKM_K$ with respect to the canonical maximal torus. Then the orbits of $\UKM^K \cdot \UKM_K^\#$ on $\Flag^J$ are also in a natural bijection with $W^J$. We will denote by ${}^K \hspace{-1.5pt} \Flag^J_w$ the orbit associated with $w$. (When $J$ or $K$ is $\varnothing$, we will usually omit the corresponding superscript. This convention will be applied more generally to any notation used in this appendix and involving $J$ or $K$.)

After choosing an identification of each simple root subgroup of $\UKM_K^\#$ with the additive group $\mathbb{G}_{\mathbf{a},\F}$, the quotient $\UKM_K^\# / [\UKM_K^\#,\UKM_K^\#]$ identifies with a product of $\# K$ copies of $\mathbb{G}_{\mathbf{a},\F}$. Composing with the addition map to $\mathbb{G}_{\mathbf{a},\F}$, we obtain a group homomorphism $\UKM_K^\# / [\UKM_K^\#,\UKM_K^\#] \to \mathbb{G}_{\mathbf{a},\F}$. The composition of this morphism with the projection
\[
 \UKM^K \cdot \UKM_K^\# \twoheadrightarrow \UKM_K^\# \twoheadrightarrow \UKM_K^\# / [\UKM_K^\#,\UKM_K^\#] 
\]
will be denoted $\chi_K$.

We will denote by
\[
 \Db_{\Whit,K}(\Flag^J,\bk)
\]
the (\'etale) $(\UKM^K \cdot \UKM_K^\#, (\chi_K)^* \AS)$-equivariant derived category of $\bk$-sheaves on $\Flag^J$. (See~\cite[Appendix~A]{modrap1} for a brief review of the construction of this category. When $K=\varnothing$, one recovers the usual Bruhat-constructible derived category.) 

In the case $J=\varnothing$, as explained in~\cite[\S 11.1]{rw} (see also~\cite[Lemma~4.2.1]{by} for more details), the orbit ${}^K \hspace{-1.5pt} \Flag_w$ supports a nonzero $(\UKM^K \cdot \UKM_K^\#, (\chi_K)^* \AS)$-equivariant local system iff $w$ is minimal in $W_K w$. The subset of $W$ consisting of elements satisfying this condition will be denoted $\KW$. For the orbits on $\Flag^J$, we observe that, for $w \in W^J$, the orbit ${}^K \hspace{-1.5pt} \Flag_w^J$ supports a nonzero $(\UKM^K \cdot \UKM_K^\#, (\chi_K)^* \AS)$-equivariant local system iff each orbit in $(q_J)^{-1}({}^K \hspace{-1.5pt} \Flag_w^J)$ supports a $(\UKM^K \cdot \UKM_K^\#, (\chi_K)^* \AS)$-equivariant local system, i.e.~iff $w$ belongs to
\[
 \KW^J := \{w \in W^J \mid \forall v \in W_J, \, wv \in \KW\}.
\]
In fact, using standard combinatorics of Coxeter groups one can check that
\begin{equation}
\label{eqn:KWJ}
 \KW^J = \{w \in W^J \mid ww_0^J \in \KW\},
\end{equation}
where $w_0^J$ is the longest element in $W_J$.

For any element $w \in \KW^J$, we have standard and costandard perverse sheaves in $\Db_{\Whit,K}(\Flag^J,\bk)$, denoted $\KD^J_w$ and $\KN^J_w$ respectively, and obtained as $!$- and $*$-extensions of the perversely shifted rank-$1$ $(\UKM^K \cdot \UKM_K^\#, (\chi_K)^* \AS)$-equivariant local system on ${}^K \hspace{-1.5pt} \Flag_w^J$. (The fact that these objects are perverse sheaves is guaranteed by~\cite[Corollaire~4.1.3]{bbd}.)

The following lemma is an extension of~\cite[Lemma~11.1.1]{rw}, with an essentially identical proof.

\begin{lemA}
\label{lem:pushforward-D-N-Whit}
 Let $w \in \KW$, and write $w=w^J w_J$ with $w^J \in W^J$ and $w_J \in W_J$. (Then $w^J$ automatically belongs to $\KW$.)
 \begin{enumerate}
  \item[\rm (1)]
  \phantomsection\label{pushforward-D-N-Whit-1}
  If $w^J \notin \KW^J$, then we have
  \[
   (q_J)_* \KD_w = 0 = (q_J)_* \KN_w.
  \]
  \item[\rm (2)]
  \phantomsection\label{pushforward-D-N-Whit-2}
  If $w^J \in \KW^J$, then we have
  \[
   (q_J)_* \KD_w \cong \KD^J_{w^J} [-\ell(w_J)], \quad (q_J)_* \KN_w \cong \KN^J_{w^J} [\ell(w_J)].
  \]
 \end{enumerate}
\end{lemA}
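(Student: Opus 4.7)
My plan is to adapt the argument of Lemma~11.1.1 in~\cite{rw}, which handles the case $K = \varnothing$, to the Whittaker-equivariant setting. The fundamental geometric input is that $q_J$ is proper and $(\UKM^K \cdot \UKM_K^\#)$-equivariant, with geometric fibers isomorphic to the finite flag variety $\PKM_J / \BKM$. Equivariance forces $q_J({}^K \hspace{-1.5pt} \Flag_w) \subset {}^K \hspace{-1.5pt} \Flag^J_{w^J}$, so the restriction of $q_J$ factors as an induced morphism $\pi_w : {}^K \hspace{-1.5pt} \Flag_w \to {}^K \hspace{-1.5pt} \Flag^J_{w^J}$ followed by the inclusion of the target orbit into $\Flag^J$. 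Since $q_J$ is proper, this factorization reduces the computation of $(q_J)_* \KD_w$ and $(q_J)_* \KN_w$ to an analysis of $(\pi_w)_!$ and $(\pi_w)_*$ (respectively) applied to the normalized Whittaker local system on ${}^K \hspace{-1.5pt} \Flag_w$, followed by extension along the orbit inclusion.

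My first step will be to verify that $\pi_w$ is a Zariski-locally trivial affine fibration with fibers isomorphic to $\mathbb{A}^{\ell(w_J)}$; the analogous statement for Bruhat strata, namely that $\Flag_{w^J w_J} \to \Flag^J_{w^J}$ is such a bundle, is recorded in \S A.A, and I will transfer it to the $(\UKM^K \cdot \UKM_K^\#)$-orbits. The decisive step is then to identify the restriction of $\chi_K$ to the unipotent group $V \cong \mathbb{A}^{\ell(w_J)}$ that acts transitively on the fibers of $\pi_w$: the whole question reduces to whether $\chi_K\vert_V$ is trivial. When $w^J \in \KW^J$, the defining condition $w^J v \in \KW$ for every $v \in W_J$ should force $\chi_K\vert_V = 0$; the pushforward then follows from the projection formula and the cohomology of affine fibers, with $H^*_c(\mathbb{A}^n, \bk)$ and $H^*(\mathbb{A}^n, \bk) = \bk$ producing the shifts $[-\ell(w_J)]$ and $[+\ell(w_J)]$ respectively in the two formulas of part~(\hyperref[pushforward-D-N-Whit-2]{2}). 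When $w^J \notin \KW^J$, some $w^J v$ lies outside $\KW$, which should translate into $\chi_K\vert_V$ being nontrivial along some root direction; the vanishing of the cohomology of a nontrivial Artin--Schreier local system on $\mathbb{A}^1$ then forces both pushforwards to vanish, giving part~(\hyperref[pushforward-D-N-Whit-1]{1}).

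The main obstacle I anticipate is the explicit dictionary between the algebraic condition ``$\chi_K\vert_V$ is trivial'' and the combinatorial condition $w^J \in \KW^J$. For $K = \varnothing$ this step is vacuous since $\chi_K$ is itself trivial, but in the present generality it requires tracking how the simple-root subgroups of $\UKM_K^\#$ supporting $\chi_K$ embed into $V$. The reformulation~\eqref{eqn:KWJ}, $\KW^J = \{w \in W^J : w w_0^J \in \KW\}$, should serve as the key combinatorial bridge, translating coset-minimality of each $w^J v$ into the vanishing of $\chi_K$ along the relevant simple-root subgroup.
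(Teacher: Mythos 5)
Your proposal is correct and follows essentially the same route as the paper's sketch: the paper likewise factors $q_J$ through the induced map ${}^K \hspace{-1.5pt} \Flag_w \to {}^K \hspace{-1.5pt} \Flag^J_{w^J}$, observes for part~(2) that this is a trivial fibration with fiber $\mathbb{A}^{\ell(w_J)}$ on which the Whittaker local system restricts trivially, and obtains the shifts exactly as you describe. The only difference is in part~(1), where instead of your fiberwise Artin--Schreier cohomology vanishing the paper notes in one line that the pushforward is a $(\UKM^K \cdot \UKM_K^\#, (\chi_K)^*\AS)$-equivariant object supported on an orbit ${}^K \hspace{-1.5pt} \Flag^J_{w^J}$ which (by the observation recorded just before the lemma) admits no nonzero such objects --- both arguments rest on the same nontriviality of $\chi_K$ in the fiber direction, so this is a cosmetic rather than substantive divergence.
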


\begin{proof}[Sketch of proof]
 We have $(q_J)({}^K \hspace{-1.5pt} \Flag_w) = {}^K \hspace{-1.5pt} \Flag_{w^J}^J$. By compatibility of the pushforward functors with composition, $(q_J)_* \KD_w$ and $(q_J)_* \KN_w$ are respectively the $*$- and $!$-pushforward of the object on ${}^K \hspace{-1.5pt} \Flag_{w^J}^J$ obtained as the $*$- and $!$-pushforward under the restriction of $q_J$ of the perversely shifted rank-$1$ $(\UKM^K \cdot \UKM_K^\#, (\chi_K)^* \AS)$-equivariant local system on ${}^K \hspace{-1.5pt} \Flag_w$. If $w^J \notin \KW^J$, then ${}^K \hspace{-1.5pt} \Flag_{w^J}^J$ does not support any nonzero $(\UKM^K \cdot \UKM_K^\#, (\chi_K)^* \AS)$-equivariant object, proving~(\hyperref[pushforward-D-N-Whit-1]{1}). If $w^J \in \KW^J$, then the map ${}^K \hspace{-1.5pt} \Flag_w \to {}^K \hspace{-1.5pt} \Flag_{w^J}^J$ induced by $q_J$ is a trivial fibration with fiber $\mathbb{A}^{\ell(w_J)}$, and the restriction of our local system to this fiber is trivial; this implies~(\hyperref[pushforward-D-N-Whit-2]{2}).
 \qed
\end{proof}

As in~\cite[\S 11.2]{rw}, we can consider the $*$-even, $*$-odd, $!$-even, $!$-odd, and parity complexes in $\Db_{\Whit,K}(\Flag^J,\bk)$. The same arguments as for~\cite[Lemma~11.2.1]{rw}, using Lemma~\ref{lem:pushforward-D-N-Whit} as a starting point instead of~\cite[Lemma~11.1.1]{rw}, implies the following.

\begin{propA}
\label{prop:pushforward-qJ}
 Let $\mathcal{F} \in \Db_{\Whit,K}(\Flag,\bk)$.
 \begin{enumerate}
  \item[\rm (1)] If $\mathcal{F}$ is $*$-even, then $(q_J)_* \mathcal{F}$ is $*$-even.
  \item[\rm (2)] If $\mathcal{F}$ is $!$-even, then $(q_J)_* \mathcal{F}$ is $!$-even.
  \item[\rm (3)] If $\mathcal{F}$ is parity, then $(q_J)_* \mathcal{F}$ is parity.
 \end{enumerate}
\end{propA}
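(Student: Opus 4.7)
The plan is to mimic the proof of [rw, Lemma~11.2.1], substituting Lemma~\ref{lem:pushforward-D-N-Whit} for [rw, Lemma~11.1.1]. Part (3) follows formally from (1) and (2): a parity complex decomposes as an even summand (simultaneously $*$-even and $!$-even) plus the shift $[1]$ of such a summand, and $(q_J)_*$ commutes with cohomological shifts. I will concentrate on proving (1); part (2) is obtained by the same argument with $\KN_w$ in place of $\KD_w$, the dual open-closed distinguished triangle (involving $i_* i^!$ and $j_* j^*$) in place of the one used below, and the second rather than the first isomorphism of Lemma~\ref{lem:pushforward-D-N-Whit}.

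The argument proceeds by induction on the number of strata $\Flag_v$ for which $(i_v)^* \mathcal{F}$ is nonzero. In the base case, there is only one such stratum $\Flag_w$; if $w \notin \KW$ then $\mathcal{F} = 0$ and the claim is trivial, and otherwise the vanishing of the restrictions to the other strata forces $\mathcal{F} \cong (i_w)_! (i_w)^* \mathcal{F}$. The $*$-even condition then exhibits $\mathcal{F}$ as a direct sum of cohomological shifts of $\KD_w$. Lemma~\ref{lem:pushforward-D-N-Whit} computes $(q_J)_* \KD_w$ as either $0$ (if $w^J \notin \KW^J$) or $\KD^J_{w^J}[-\ell(w_J)]$; in the second case the result is $*$-even on $\Flag^J$, since the only nonzero $*$-restriction of a standard lies on its own stratum, and the appropriate parities match thanks to the length identity $\ell(w) = \ell(w^J) + \ell(w_J)$.

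For the inductive step, pick a stratum $\Flag_{w_0}$ which is open in the support of $\mathcal{F}$, and write $j : \Flag_{w_0} \hookrightarrow \Flag$ and $i : Z \hookrightarrow \Flag$ for the inclusions of this stratum and of its closed complement in the support, respectively. Applying $(q_J)_*$ to the open-closed distinguished triangle
\[
j_! j^* \mathcal{F} \to \mathcal{F} \to i_* i^* \mathcal{F} \xrightarrow{+1}
\]
yields a distinguished triangle on $\Flag^J$ whose first term is $*$-even by the base case (since $j_! j^* \mathcal{F}$ is a direct sum of shifts of $\KD_{w_0}$) and whose third term is $*$-even by the inductive hypothesis (since $i^* \mathcal{F}$ has strictly fewer nonzero stratum restrictions than $\mathcal{F}$). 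Since the class of $*$-even complexes is stable under extensions---a consequence of the vanishing of positive-degree $\mathrm{Ext}$ groups between shifts of Whittaker local systems on a single affine cell---this forces $(q_J)_* \mathcal{F}$ to be $*$-even itself.

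The main (essentially the only) subtle point is the parity bookkeeping in the base case: one must verify that the cohomological shift $[\pm \ell(w_J)]$ supplied by Lemma~\ref{lem:pushforward-D-N-Whit} is exactly right to carry the parity of $\mathcal{F}$ on $\Flag$ over to that of $(q_J)_* \mathcal{F}$ on $\Flag^J$. This is handled cleanly by the length additivity identity noted above.
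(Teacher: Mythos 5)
Your proposal is correct and takes essentially the same route as the paper: the paper's proof consists precisely of running the argument of~\cite[Lemma~11.2.1]{rw} (the stratum-by-stratum induction via open--closed triangles and extension-stability of $*$-even and $!$-even complexes) with Lemma~\ref{lem:pushforward-D-N-Whit} substituted for~\cite[Lemma~11.1.1]{rw}, which is exactly what you do. Your parity bookkeeping via $\ell(w)=\ell(w^J)+\ell(w_J)$ and the formal deduction of (3) from (1) and (2) are the standard details of that argument, so nothing is missing.
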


The general theory of parity complexes from~\cite{jmw} guarantees that, for any $w \in \KW^J$, there exists at most one indecomposable parity complex which is supported on the closure of ${}^K \hspace{-1.5pt} \Flag_{w}^J$ and whose restriction to ${}^K \hspace{-1.5pt} \Flag_{w}^J$ is a perversely shifted rank-$1$ local system. Proposition~\ref{prop:pushforward-qJ} guarantees that such an object indeed exists: in fact by~\cite[Remark~11.2.4]{rw} there exists a parity complex in $\Db_{\Whit,K}(\Flag,\bk)$ supported on ${}^K \hspace{-1.5pt} \Flag_{w}$ and whose restriction to ${}^K \hspace{-1.5pt} \Flag_{w}$ is a perversely shifted rank-$1$ local system. The image of this object under $(q_J)_*$ then admits a direct summand with the appropriate properties. As in~\cite{jmw} we deduce that isomorphism classes of indecomposable parity complexes in $\Db_{\Whit,K}(\Flag^J,\bk)$ are parametrized (in the obvious way) by $\KW^J \times \Z$. These comments also show that any parity complex in $\Db_{\Whit,K}(\Flag^J,\bk)$ is a direct summand of an object of the form $(q_J)_* \mathcal{F}$ with $\mathcal{F}$ parity in $\Db_{\Whit,K}(\Flag,\bk)$.

\begin{propA}
 \label{prop:pullback-qJ}
 Let $\mathcal{F} \in \Db_{\Whit,K}(\Flag^J,\bk)$.
  If $\mathcal{F}$ is parity, then $(q_J)^* \mathcal{F}$ and $(q_J)^! \mathcal{F}$ are parity.
\end{propA}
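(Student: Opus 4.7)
The plan is to exploit the structural fact, noted just before the proposition, that every parity complex on $\Flag^J$ is a direct summand of some $(q_J)_*\mathcal{G}$ with $\mathcal{G} \in \Db_{\Whit,K}(\Flag,\bk)$ parity. Since $(q_J)^*$ and $(q_J)^!$ commute with direct summands and parity is stable under direct summands, it suffices to prove the claim when $\mathcal{F} = (q_J)_*\mathcal{G}$. Moreover, since Verdier duality preserves parity and interchanges $(q_J)^*$ with $(q_J)^!$, it is enough to establish that $(q_J)^*(q_J)_*\mathcal{G}$ is parity.

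The main tool is the Cartesian square
\[
\begin{tikzcd} \Flag \times_{\Flag^J} \Flag \ar[r, "\pi_2"] \ar[d, "\pi_1"'] & \Flag \ar[d, "q_J"] \\ \Flag \ar[r, "q_J"'] & \Flag^J. \end{tikzcd}
\]
Under the $\GKM$-equivariant isomorphism $\Flag \times_{\Flag^J} \Flag \cong \GKM \times^{\BKM}(\PKM_J/\BKM)$ sending $[g : p\BKM]$ to $(g\BKM, gp\BKM)$, both $\pi_1$ and $\pi_2$ become smooth proper fiber bundles over $\Flag$ with fiber the smooth projective variety $\PKM_J/\BKM$ of finite type and dimension $d_J := \ell(w_0^J)$. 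Since $q_J$ is proper, proper base change yields $(q_J)^*(q_J)_*\mathcal{G} \cong (\pi_1)_*(\pi_2)^*\mathcal{G}$.

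It then remains to verify two parity-preservation statements on the fiber product, suitably stratified. First, since $\pi_2$ is smooth, $(\pi_2)^*\mathcal{G}$ inherits from $\mathcal{G}$ the property of being parity: the $*$-restrictions to strata are pullbacks of shifted parity local systems along smooth affine fibrations, and the $!$-restrictions differ from the $*$-restrictions only by the shift $[2d_J](d_J)$ coming from the smoothness of $\pi_2$. Second, one must show that $(\pi_1)_*$ sends parity to parity. Since $\PKM_J/\BKM$ admits a cellular decomposition by affine cells whose \'etale cohomology lies in even degrees, and $\pi_1$ is locally trivial with this fiber, the pushforward preserves parity. Concretely, one may reduce \'etale-locally to a product situation, or factor $\pi_1$ fiberwise through a Bott--Samelson resolution of $\PKM_J/\BKM$ to obtain an iterated composition of $\mathbb{P}^1$-bundles, each of which preserves parity by a direct calculation as in~\cite{jmw}.

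The principal technical difficulty is the last step: rigorously establishing that $(\pi_1)_*$ preserves parity in the Kac--Moody ind-scheme setting, where smoothness of $q_J$ is not globally available. The other ingredients---proper base change, smooth pullback along $\pi_2$, and the duality-based reduction to the $(q_J)^*$ case---are essentially formal, and once parity preservation under $(\pi_1)_*$ is in hand the proposition follows at once.
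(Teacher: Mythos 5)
Your opening reduction is the same as the paper's: every parity complex on $\Flag^J$ is a summand of $(q_J)_*\cG$ with $\cG$ parity on $\Flag$, so it suffices to handle $(q_J)^*(q_J)_*$ and $(q_J)^!(q_J)_*$. (The Verdier duality step is dispensable and has a small wrinkle anyway, since duality exchanges the Whittaker character with its inverse; one can simply run the argument for $^!$ in parallel.) The genuine gap is exactly the step you yourself flag as the ``principal technical difficulty'': the claim that $(\pi_1)_*$ sends $(\pi_2)^*\cG$ to a parity complex. Neither of your two suggested justifications works as stated. The complex $(\pi_2)^*\cG$ is \emph{not} constructible with respect to the pullback along $\pi_1$ of the Whittaker stratification of $\Flag$ (the two stratifications of $\GKM\times^{\BKM}(\PKM_J/\BKM)$, via $\pi_1$ and via $\pi_2$, are transverse to each other, not nested), so local triviality of $\pi_1$ plus evenness of $H^\bullet(\PKM_J/\BKM)$ does not directly control the stalks and costalks of $(\pi_1)_*(\pi_2)^*\cG$ along the orbits of the base; for the same reason the \'etale-local reduction to a product does not apply (locally the space is a product, but the sheaf is not an external tensor product in any chart trivializing $\pi_1$). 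The Bott--Samelson suggestion also does not give a factorization: $\pi_1$ is a $\PKM_J/\BKM$-bundle, and $\PKM_J/\BKM$ is not an iterated $\mathbb{P}^1$-bundle in general --- Bott--Samelson varieties map onto it birationally but are not isomorphic to it, so you cannot write $\pi_1$ as a composition of $\mathbb{P}^1$-bundles.

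What you are implicitly reproving is the convolution statement: up to shift, $(\pi_1)_*(\pi_2)^*\cG$ is the convolution $\cG\star^{\BKM}\underline{\bk}_{\overline{\Flag_{w_0^J}}}[\ell(w_0^J)]$, and parity preservation for such convolutions is a nontrivial theorem of~\cite{jmw, rw}, not a formal consequence of local triviality; citing it would repair your argument. The paper avoids this machinery altogether: from the definitions, $(q_J)^*$ preserves $*$-evenness and $(q_J)^!$ preserves $!$-evenness (the maps ${}^K\hspace{-1.5pt}\Flag_{wv}\to{}^K\hspace{-1.5pt}\Flag^J_w$ are affine-space fibrations), so by Proposition~\ref{prop:pushforward-qJ} the functor $(q_J)^*(q_J)_*$ preserves $*$-evenness and $(q_J)^!(q_J)_*$ preserves $!$-evenness; since these two functors differ only by an (even) cohomological shift, as in~\cite[Lemma~9.4.2]{rw}, each sends parity complexes to parity complexes. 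That shift comparison is precisely where the locally trivial $\PKM_J/\BKM$-fibration enters, packaged so that no smoothness of $q_J$ and no fiberwise cohomology computation is needed. As it stands, your proposal identifies the right geometric objects but leaves the decisive evenness statement unproved.
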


\begin{proof}[Sketch of proof]
The comments before the statement of the proposition show that it suffices to prove a similar statement for the functors $(q_J)^* (q_J)_*$ and $(q_J)^! (q_J)_*$. Now it follows from Proposition~\ref{prop:pushforward-qJ} and the definitions that the first, resp.~second, of these functors sends $*$-even, resp.~$!$-even, complexes to $*$-even, resp.~$!$-even, complexes (and similarly for odd). But the same considerations as in~\cite[Lemma~9.4.2]{rw} show that these functors differ only by a cohomological shift; hence they send parity complexes to parity complexes.
\qed
\end{proof}

\subsection{Averaging functor}
\label{ss:appendix-averaging}

We have a natural ``averaging'' functor
\[
 \Av_K^J : \Db_{\Whit,\varnothing}(\Flag^J,\bk) \to \Db_{\Whit,K}(\Flag^J,\bk) 
\]
which can be defined as in~\cite[\S A.2]{modrap1}. More precisely, a priori there exist two versions of this functor: a $*$-version $\Av_{K,*}^J$ (defined in terms of a $*$-pushforward) and a $!$-version $\Av_{K,!}^J$ (defined in terms of a $!$-pushforward). However there exists a canonical morphism $\Av_{K,!}^J \to \Av_{K,*}^J$. The composition of each of these functors with the forgetful functor from the $\BKM$-equivariant derived category to $\Db_{\Whit,\varnothing}(\Flag^J,\bk)$ identifies with the convolution product with the object $\KD_{\id}^\varnothing = \KN_{\id}^{\varnothing}$; therefore our morphism is an isomorphism on objects in the essential image of this functor. Since this essential image generates $\Db_{\Whit,\varnothing}(\Flag^J,\bk)$ as a triangulated category, the 5-lemma then implies that the morphism $\Av_{K,!}^J \to \Av_{K,*}^J$ is an isomorphism. Our notation $\Av_K^J$ stands for either of these isomorphic functors.

\begin{lemA}
\label{lem:Av-parity}
 The functor $\Av_K^J$ sends parity complexes to parity complexes.
\end{lemA}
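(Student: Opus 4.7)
My plan is to reduce the statement to the case $J = \varnothing$, where the analogous result is essentially known from \cite{rw, amrw}, by exploiting the compatibility of the averaging functor with pushforward along $q_J$. The key ingredient is a base change isomorphism
\[
\Av_K^J \circ (q_J)_* \cong (q_J)_* \circ \Av_K^\varnothing.
\]
Since $q_J$ is proper and equivariant for the action of $\UKM^K \cdot \UKM_K^\#$ on $\Flag$ and $\Flag^J$, and since the two incarnations $\Av_{K,*}^J$ and $\Av_{K,!}^J$ of the averaging functor coincide, this isomorphism follows from proper base change applied to the correspondence defining $\Av_K$.

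With this in hand, let $\mathcal{F} \in \Db_{\Whit,\varnothing}(\Flag^J, \bk)$ be parity. The discussion preceding Proposition~\ref{prop:pullback-qJ}, applied with $K$ replaced by $\varnothing$, shows that $\mathcal{F}$ is a direct summand of $(q_J)_* \mathcal{G}$ for some parity object $\mathcal{G} \in \Db_{\Whit,\varnothing}(\Flag, \bk)$; hence $\Av_K^J(\mathcal{F})$ is a direct summand of $(q_J)_*(\Av_K^\varnothing(\mathcal{G}))$. If we know $\Av_K^\varnothing(\mathcal{G})$ is parity, then Proposition~\ref{prop:pushforward-qJ} immediately gives that $(q_J)_* \Av_K^\varnothing(\mathcal{G})$ is parity, and the conclusion for $\Av_K^J(\mathcal{F})$ follows by closure of parity complexes under direct summands.

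To treat the remaining case $J = \varnothing$, I would exploit the fact that $\Av_K^\varnothing$ commutes with right convolution by $\BKM$-equivariant complexes (an analogue of the identity \cite[(11.1.1)]{rw}). Since any parity complex in $\Db_{(\BKM)}(\Flag, \bk)$ is, by the general theory of \cite{jmw}, a direct summand of a Bott--Samelson type convolution of parity complexes associated with simple reflections, and since right convolution with such parity objects preserves the parity property on the Whittaker side, the problem reduces to checking the image of a single seed object, e.g.~the skyscraper $\delta_e$ at the identity coset. This image is (up to shift) the $!$-extension of a rank-$1$ Whittaker local system from the single orbit ${}^K\hspace{-1.5pt}\Flag_e$, and is thus parity by construction.

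The main technical obstacle is setting up the base change and convolution-compatibility statements rigorously in the Kac--Moody framework (where $\GKM$ is not a scheme of finite type). These are, however, direct analogues of statements already established in the Iwahori--Whittaker case ($K = S$ with $W_K$ finite) in \cite{rw, amrw}, and the same arguments adapt verbatim to an arbitrary subset $K \subset I$ of finite type.
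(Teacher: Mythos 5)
Your reduction of the general case to $J=\varnothing$ — via the commutation $\Av_K^J \circ (q_J)_* \cong (q_J)_* \circ \Av_K$, the fact that every parity complex in $\Db_{\Whit,\varnothing}(\Flag^J,\bk)$ is a direct summand of $(q_J)_*\mathcal{G}$ with $\mathcal{G}$ parity on $\Flag$, and Proposition~\ref{prop:pushforward-qJ} — is exactly the argument the paper gives. The only divergence is in the base case $J=\varnothing$: the paper simply cites \cite[Corollary~11.2.3]{rw}, whereas you sketch a proof of it by convolution with Bott--Samelson-type parity objects starting from the clean (hence parity) object $\Av_K(\delta_e)$, which is in the spirit of the cited result and acceptable.
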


\begin{proof}
 The case $J=\varnothing$ is treated in~\cite[Corollary~11.2.3]{rw}. The general case follows, using the facts that
 \[
  (q_J)_* \circ \Av_K \cong \Av_K^J \circ (q_J)_*,
 \]
that $(q_J)_*$ sends parity complexes to parity complexes (see Proposition~\ref{prop:pushforward-qJ}) and that any parity complex in $\Db_{\Whit,K}(\Flag^J,\bk)$ is a direct summand of an object of the form $(q_J)_* \mathcal{F}$ with $\mathcal{F}$ parity in $\Db_{\Whit,K}(\Flag,\bk)$ (see the comments before Proposition~\ref{prop:pullback-qJ}).
\qed
\end{proof}

We now denote by $\Par_{\Whit,K}(\Flag^J,\bk)$ the full subcategory of $\Db_{\Whit,K}(\Flag^J,\bk)$ whose objects are parity. Lemma~\ref{lem:Av-parity} shows that $\Av_K^J$ restricts to a functor $\Par_{\Whit,\varnothing}(\Flag^J,\bk) \to \Par_{\Whit,K}(\Flag^J,\bk)$, which will also be denoted $\Av_K^J$. For $w \in W^J$, we will denote by $\cE^J_w$ the indecomposable object of $\Par_{\Whit,\varnothing}(\Flag^J,\bk)$ parametrized by $w$ (see the comments following Proposition~\ref{prop:pushforward-qJ}). We will also denote by
\[
\langle \cE^J_w : w \in W^J \smallsetminus \KW^J \rangle_{\oplus,\Z}
\]
the full subcategory of $\Par_{\Whit,\varnothing}(\Flag^J,\bk)$ whose objects are the direct sums of cohomological shifts of objects of the form $\cE^J_w$ with $w \in W^J \smallsetminus \KW^J$. Then we consider the ``naive'' quotient
\[
 \Par_{\Whit,\varnothing}(\Flag^J,\bk) \sslash
 \langle \cE^J_w : w \in W^J \smallsetminus \KW^J \rangle_{\oplus,\Z},
\]
i.e.~the additive category whose objects are those of $\Par_{\Whit,\varnothing}(\Flag^J,\bk)$, and whose morphisms are obtained from those in $\Par_{\Whit,\varnothing}(\Flag^J,\bk)$ by quotienting by the morphisms which factor through an object of $\langle \cE^J_w : w \in W^J \smallsetminus \KW^J \rangle_{\oplus,\Z}$.

\begin{lemA}
\label{lem:pullback-parity}
 For any $w \in W^J$ we have
 \[
  (q_J)^* \cE_w^J[\ell(w_0^J)] \cong \cE_{w w_0^J}.
 \]
\end{lemA}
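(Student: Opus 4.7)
The plan is to identify $(q_J)^* \cE_w^J[\ell(w_0^J)]$ with $\cE_{ww_0^J}$ by verifying three defining properties: it is a parity complex, it has the correct support and rank-one restriction to the open top stratum, and it is indecomposable. By the classification of indecomposable parity sheaves, this will yield the desired isomorphism.

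The parity assertion follows from Proposition~\ref{prop:pullback-qJ} after a cohomological shift. For the support and top-stratum restriction, since $w \in W^J$ lengths add on the coset $wW_J$, so $ww_0^J$ is the unique element of maximal length there; more generally $uv \leq ww_0^J$ in the Bruhat order whenever $u \leq w$ in $W^J$ and $v \in W_J$. Hence $(q_J)^{-1}(\overline{\Flag_w^J}) = \overline{\Flag_{ww_0^J}}$, with $\Flag_{ww_0^J}$ as the unique open stratum. The restriction of $q_J$ to $\Flag_{ww_0^J}$ is a trivial $\mathbb{A}^{\ell(w_0^J)}$-bundle over $\Flag_w^J$, so pulling back $\cE_w^J|_{\Flag_w^J} \cong \underline{\bk}[\ell(w)]$ and shifting by $\ell(w_0^J)$ produces $\underline{\bk}[\ell(ww_0^J)]$ on $\Flag_{ww_0^J}$, matching the restriction of $\cE_{ww_0^J}$.

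Consequently we may write $(q_J)^* \cE_w^J[\ell(w_0^J)] \cong \cE_{ww_0^J} \oplus \cF$, with $\cF$ a parity complex supported on $\overline{\Flag_{ww_0^J}} \smallsetminus \Flag_{ww_0^J}$, and the main obstacle will be to show $\cF = 0$. The plan for this step is to combine the adjunction $(q_J)^* \dashv (q_J)_*$ with Lemma~\ref{lem:pushforward-D-N-Whit}: writing an arbitrary $u \in W$ as $u = u^J u_J$ with $u^J \in W^J$ and $u_J \in W_J$, the lemma gives $(q_J)_* \nabla_u \cong \nabla_{u^J}^J[\ell(u_J)]$, and hence
\[
\Hom\bigl((q_J)^* \cE_w^J[\ell(w_0^J)],\,\nabla_u[m]\bigr) \;\cong\; \Hom\bigl(\cE_w^J,\,\nabla_{u^J}^J[m + \ell(u_J) - \ell(w_0^J)]\bigr)
\]
for every $m \in \mathbb{Z}$, and similarly with $\Delta_u$ in place of $\nabla_u$. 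Comparing with the analogous computation performed on $\cE_{ww_0^J}$ itself, one reads off that all graded $\nabla$-multiplicities of $\cF$ must vanish; since parity complexes admit $\nabla$-filtrations, this forces $\cF = 0$.

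The main technical obstacle lies in this final comparison, namely matching the graded $\nabla$-multiplicities of $\cE_{ww_0^J}$ against the length-shifted $\nabla^J$-multiplicities of $\cE_w^J$ via the bijection $u \leftrightarrow (u^J, u_J)$ and the shift $m \mapsto m + \ell(u_J) - \ell(w_0^J)$. This can be carried out by an induction on the Bruhat order using the pushforward formulas of Lemma~\ref{lem:pushforward-D-N-Whit}.
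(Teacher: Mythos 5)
Your first steps coincide with the paper's proof (which is adapted from Williamson--Braden): parity of the pullback via Proposition~\ref{prop:pullback-qJ}, the identification of $\Flag_{ww_0^J}$ as the open stratum of the support with restriction $\underline{\bk}[\ell(ww_0^J)]$, and the resulting splitting $(q_J)^*\cE_w^J[\ell(w_0^J)]\cong\cE_{ww_0^J}\oplus\cF$. The gap is in the step that is supposed to kill $\cF$. Your adjunction computation, $\Hom\bigl((q_J)^*\cE_w^J[\ell(w_0^J)],\nabla_u[m]\bigr)\cong\Hom\bigl(\cE_w^J,\nabla^J_{u^J}[m+\ell(u_J)-\ell(w_0^J)]\bigr)$, is correct, but it only restates that the stalks of a pullback are the pulled-back stalks; it gives the multiplicities of the \emph{total} object $\cE_{ww_0^J}\oplus\cF$. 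To ``read off that all graded $\nabla$-multiplicities of $\cF$ vanish'' you would need to know, independently, that $\dim\Hom(\cE_{ww_0^J},\nabla_u[m])=\dim\Hom(\cE_w^J,\nabla^J_{u^J}[m+\ell(u_J)-\ell(w_0^J)])$ for all $u,m$ --- and that equality of graded stalk data is precisely (equivalent to) the lemma you are proving. There is no ``analogous computation performed on $\cE_{ww_0^J}$ itself'': the graded stalk multiplicities of an indecomposable parity complex are not determined by an induction on the Bruhat order using Lemma~\ref{lem:pushforward-D-N-Whit} (which only concerns pushforwards of standard and costandard objects); they encode the $p$-canonical basis and genuinely depend on the characteristic, so no such recursion exists. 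Hence the final comparison is circular, and the proposed induction cannot supply the missing input.

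The paper closes this step with a different mechanism, which you should adopt: apply $(q_J)_*$ to the splitting. The splitting shows that on each stratum $\Flag_x$ with $x\in wW_J$ the pullback restricts to $\underline{\bk}[\ell(ww_0^J)]$, whence $(q_J)_*\cE_{ww_0^J}\cong\bigoplus_{z\in W_J}\cE_w^J[\ell(w_0^J)-2\ell(z)]\oplus\cG'$ for some parity complex $\cG'$; on the other hand the projection formula (the fibres of $q_J$ have cohomology $\bigoplus_{z\in W_J}\bk[-2\ell(z)]$) gives $(q_J)_*(q_J)^*\cE_w^J[\ell(w_0^J)]\cong\bigoplus_{z\in W_J}\cE_w^J[\ell(w_0^J)-2\ell(z)]$ exactly. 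Comparing the two expressions and using the Krull--Schmidt property forces $\cG'=(q_J)_*\cF=0$, and $(q_J)_*\cF=0$ for a parity complex $\cF$ then forces $\cF=0$. Some input of this kind --- the projection formula applied to the pullback, not just Lemma~\ref{lem:pushforward-D-N-Whit} applied to (co)standards --- is indispensable for eliminating the complementary summand.
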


\begin{proof}
The proof is copied from~\cite[Proposition~3.5]{williamson-IC}. By Proposition~\ref{prop:pullback-qJ}, the complex $(q_J)^* \cE_w^J[\ell(w_0^J)]$ is parity. The orbit $\Flag_{ww_0^J}$ is open in the support of this object, and its restriction to this stratum is $\underline{\bk}[\ell(ww_0^J)]$. Hence
\[
 (q_J)^* \cE_w^J[\ell(w_0^J)] \cong \cE_{w w_0^J} \oplus \cG
\]
for some parity complex $\cG$ in $\Db_{\Whit,\varnothing}(\Flag,\bk)$. This isomorphism also shows that the restriction of $\cE_{w w_0^J}$ to any stratum $\Flag_x$ with $x \in wW_J$ is $\underline{\bk}[\ell(ww_0^J)]$; it follows (using distinguished triangles associated with open/closed decompositions) that the restriction of $(q_J)_* \cE_{w w_0^J}$ to $\Flag^J_w$ is $\bigoplus_{z \in W_J} \underline{\bk}[\ell(ww_0^J) - 2 \ell(z)]$. Since this stratum is open in the support of this object, we deduce that
\[
 (q_J)_* \cE_{w w_0^J} \cong \bigoplus_{z \in W_J} \cE_w^J [\ell(w_0^J) - 2 \ell(z)] \oplus \cG'
\]
for some parity complex $\cG'$ in $\Db_{\Whit,\varnothing}(\Flag^J,\bk)$, and then that
\[
 (q_J)_* (q_J)^* \cE_w^J[\ell(w_0^J)] \cong \bigoplus_{z \in W_J} \cE_w^J [\ell(w_0^J) - 2 \ell(z)] \oplus \cG' \oplus (q_J)_* \cG.
\]
On the other hand, by the projection formula we have
\[
 (q_J)_* (q_J)^* \cE_w^J[\ell(w_0^J)] \cong \bigoplus_{z \in W_J} \cE_w^J [\ell(w_0^J) - 2 \ell(z)],
\]
proving that $\cG' = (q_J)_* \cG =0$. From this one can deduce that $\cG=0$, which completes the proof.
\qed
\end{proof}

\begin{propA}
\label{prop:equivalence-parity}
Assume that $\mathrm{char}(\ell) \neq 2$.\footnote{As should be clear from the proof, this assumption can be refined to the one considered in~\cite[Theorem~11.5.1]{rw}.}
 The functor $\Av^J_K$ vanishes on $\langle \cE^J_w : w \in W^J \smallsetminus \KW^J \rangle_{\oplus,\Z}$, and induces an equivalence of categories
 \[
  \Par_{\Whit,\varnothing}(\Flag^J,\bk) 
  \sslash
  \langle \cE^J_w : w \in W^J \smallsetminus \KW^J \rangle_{\oplus,\Z} \simto \Par_{\Whit,K}(\Flag^J,\bk).
 \]
\end{propA}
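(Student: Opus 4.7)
The plan is to reduce Proposition~\ref{prop:equivalence-parity} to the already-established case $J = \varnothing$ of~\cite[Theorem~11.5.1]{rw}, by transferring questions along the pullback $(q_J)^*$ which by Lemma~\ref{lem:pullback-parity} relates indecomposable parity complexes on $\Flag^J$ and on $\Flag$. Two technical inputs will be used throughout: (i) a base change isomorphism $(q_J)^* \circ \Av^J_K \cong \Av_K \circ (q_J)^*$, which follows by inspection of the equivariantization construction defining the averaging functor together with smooth base change (the map $q_J$ being equivariant for $\UKM^K \cdot \UKM_K^\#$); and (ii) conservativity of $(q_J)^*$ on the subcategory of parity complexes, which follows from the computation at the end of the proof of Lemma~\ref{lem:pullback-parity} showing that any parity object $\cF$ appears, up to cohomological shifts, as a direct summand of $(q_J)_*(q_J)^* \cF$.

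\textbf{Vanishing and indecomposable identification.} For $w \in W^J \smallsetminus \KW^J$, the relation~\eqref{eqn:KWJ} gives $ww_0^J \notin \KW$, so the $J = \varnothing$ case yields $\Av_K(\cE_{ww_0^J}) = 0$. Combining with Lemma~\ref{lem:pullback-parity} and the base change isomorphism gives $(q_J)^* \Av^J_K(\cE^J_w) = 0$; since $\Av^J_K(\cE^J_w)$ is parity by Lemma~\ref{lem:Av-parity}, conservativity forces $\Av^J_K(\cE^J_w) = 0$. This proves the first assertion and lets $\Av^J_K$ descend to a functor $\overline{\Av^J_K}$ on the quotient. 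For $w \in \KW^J$, I claim that $\Av^J_K(\cE^J_w) \cong \cE^{K,J}_w$: by Lemma~\ref{lem:Av-parity}, the left-hand side is parity, supported on $\overline{{}^K \hspace{-1.5pt} \Flag^J_w}$, and restricts to the perversely shifted rank-one Whittaker local system on the open stratum, so $\cE^{K,J}_w$ is a direct summand. Extra summands are excluded after proving the Whittaker analogue of Lemma~\ref{lem:pullback-parity}, namely $(q_J)^* \cE^{K,J}_w \cong \cE^{K,\varnothing}_{ww_0^J}[-\ell(w_0^J)]$ (the argument of Lemma~\ref{lem:pullback-parity} transports verbatim, using the Whittaker versions of Propositions~\ref{prop:pushforward-qJ} and~\ref{prop:pullback-qJ}): indeed $(q_J)^* \Av^J_K(\cE^J_w) \cong \Av_K(\cE_{ww_0^J})[-\ell(w_0^J)] \cong \cE^{K,\varnothing}_{ww_0^J}[-\ell(w_0^J)]$ by the $J = \varnothing$ case, matching $(q_J)^* \cE^{K,J}_w$, and conservativity rules out any further summands. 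This gives essential surjectivity of $\overline{\Av^J_K}$ and a bijection on indecomposable objects.

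\textbf{Full faithfulness, and the main obstacle.} The remaining step---and the one I expect to be the most delicate---is to prove that $\overline{\Av^J_K}$ is fully faithful; equivalently, for $v, w \in \KW^J$ and $n \in \Z$, the natural map
\[
 \Hom_{\Par_{\Whit,\varnothing}(\Flag^J,\bk)}(\cE^J_v, \cE^J_w[n]) \longrightarrow \Hom_{\Par_{\Whit,K}(\Flag^J,\bk)}(\cE^{K,J}_v, \cE^{K,J}_w[n])
\]
is surjective with kernel precisely the subspace of maps factoring through some $\cE^J_u[m]$ with $u \in W^J \smallsetminus \KW^J$. My plan is to use the adjunction between $\Av^J_K$ and its right adjoint (the forgetful functor $\mathrm{For}$) to rewrite the target as $\Hom(\cE^J_v, \mathrm{For} \circ \Av^J_K(\cE^J_w)[n])$, and to analyse $\mathrm{For} \circ \Av^J_K(\cE^J_w)$ via the identification $(q_J)^* \mathrm{For} \circ \Av^J_K(\cE^J_w) \cong \mathrm{For} \circ \Av_K(\cE_{ww_0^J})[-\ell(w_0^J)]$, whose right-hand side is completely controlled by the $J = \varnothing$ case of~\cite[Theorem~11.5.1]{rw}. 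The hard part is then to descend the factorizations through objects $\cE_u$ with $u \notin \KW$ provided by the case $J = \varnothing$ to factorizations through objects $\cE^J_{u'}$ with $u' \notin \KW^J$ in a manner compatible with $q_J$; here I would use Lemma~\ref{lem:pushforward-D-N-Whit} and Proposition~\ref{prop:pushforward-qJ}, which together track precisely which indecomposable parities on $\Flag$ arise from pushforwards of those on $\Flag^J$ and with which shifts, to assemble the required descent and conclude.
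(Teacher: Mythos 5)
Your first paragraph and your ``vanishing and indecomposable identification'' step are fine and essentially agree with the paper: the vanishing of $\Av_K^J$ on $\cE^J_w$ for $w \in W^J \smallsetminus \KW^J$ is obtained in the paper exactly as you do, from Lemma~\ref{lem:pullback-parity}, \eqref{eqn:KWJ}, the commutation $(q_J)^* \circ \Av_K^J \cong \Av_K \circ (q_J)^*$, and the fact that $(q_J)^*$ kills no nonzero object. The problem is the last step. Full faithfulness of the induced functor is the actual content of the proposition, and what you offer there is a plan with an admitted ``hard part,'' not an argument. Moreover the route you sketch runs into a genuine obstruction: after rewriting $\Hom(\Av_K^J(\cE^J_v), \Av_K^J(\cE^J_w)[n])$ via the right adjoint of $\Av_K^J$, your only handle on the object $\mathrm{For} \circ \Av_K^J(\cE^J_w)$ is its pullback under $(q_J)^*$; since $(q_J)^*$ is very far from fully faithful, knowing this pullback does not compute the Hom-spaces on $\Flag^J$, and it certainly does not identify which morphisms factor through $\langle \cE^J_u : u \in W^J \smallsetminus \KW^J \rangle_{\oplus,\Z}$. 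The ``descent of factorizations'' you defer is exactly where the difficulty of the statement sits, so as written the proof is incomplete. (A secondary point: your identification $\Av_K^J(\cE^J_w) \cong \cE^{K,J}_w$ leans on a Whittaker analogue of Lemma~\ref{lem:pullback-parity} that you assert ``transports verbatim''; this is plausible but is an additional unproved input, and the paper never needs it.)

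For comparison, the paper's proof of full faithfulness sidesteps your descent problem by moving in the other direction along $q_J$. Every parity complex on $\Flag^J$ is a direct summand of $(q_J)_*\cH$ with $\cH$ parity on $\Flag$ (comments after Proposition~\ref{prop:pushforward-qJ} with $K=\varnothing$), so one may assume $\cG = (q_J)_*\cH$; then the adjunction $((q_J)^*, (q_J)_*)$ together with $(q_J)^* \circ \Av_K^J \cong \Av_K \circ (q_J)^*$ transports both $\Hom(\cF,\cG)$ and $\Hom(\Av_K^J(\cF), \Av_K^J(\cG))$ to Hom-spaces on $\Flag$, where the comparison map is induced by $\Av_K$ and the known case $J=\varnothing$ of~\cite[Theorem~11.5.1]{rw} applies. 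The only remaining point is that the adjunction isomorphism matches the subspace of maps factoring through $\langle \cE^J_u : u \in W^J \smallsetminus \KW^J \rangle_{\oplus,\Z}$ with the subspace of maps factoring through $\langle \cE_u : u \in W \smallsetminus \KW \rangle_{\oplus,\Z}$; the nontrivial inclusion follows from the observation that for $u \notin \KW$ the object $(q_J)_* \cE_u$ lies in $\langle \cE^J_x : x \in W^J \smallsetminus \KW^J \rangle_{\oplus,\Z}$, because $\Av_K$ annihilates $(q_J)^*(q_J)_*\cE_u$. Unless you can carry out your descent step with this level of precision, you should replace it by an argument of this type.
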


\begin{proof}
 The case $J=\varnothing$ is equivalent to~\cite[Theorem~11.5.1]{rw}. Let us now explain how the general case can be deduced from this one. Let $w \in W^J \smallsetminus \KW^J$. By Lemma~\ref{lem:pullback-parity} we have $(q_J)^* \cE^J_w \cong \cE_{ww_0^J}$. Here $w w_0^J \notin \KW$ by~\eqref{eqn:KWJ}, so that $\Av_K(\cE_{ww_0^J})=0$. We deduce that
 \[
  \Av_K((q_J)^* \cE^J_w) \cong (q_J)^* \Av_K^J(\cE_w^J)=0,
 \]
which implies that $\Av_K^J(\cE_w^J)$ vanishes. This proves the first claim of the statement, and hence also that $\Av_K^J$ factors through a functor
\[
 \Par_{\Whit,\varnothing}(\Flag^J,\bk) 
 \sslash
 \langle \cE^J_w : w \in W^J \smallsetminus \KW^J \rangle_{\oplus,\Z} \to \Par_{\Whit,K}(\Flag^J,\bk).
\]
We will now argue that this functor is fully faithful; essential surjectivity is then easy to see.

We need to show that for any $\cF,\cG$ in $\Par_{\Whit,\varnothing}(\Flag^J,\bk)$, the functor $\Av_K^J$ induces an isomorphism between the quotient of $\Hom(\cF,\cG)$ by the morphisms factoring through an object of the subcategory $\langle \cE^J_w : w \in W^J \smallsetminus \KW^J \rangle_{\oplus,\Z}$ and $\Hom(\Av_K^J(\cF), \Av_K^J(\cG))$. The comments after Proposition~\ref{prop:pushforward-qJ} (in the special case $K=\varnothing$) show that we can assume that $\cG=(q_J)_* \cH$ for some $\cH$ in $\Par_{\Whit,\varnothing}(\Flag,\bk)$; then we have $\Av_K^J(\cG) = (q_J)_* \Av_K(\cH)$, and using adjunction and the obvious isomorphism $(q_J)^* \circ \Av_K^J \cong \Av_K \circ (q_J)^*$ we obtain isomorphisms
\begin{align*}
 \Hom(\cF,\cG) &\cong \Hom((q_J)^* \cF, \cH), \\
 \Hom(\Av_K^J(\cF), \Av_K^J(\cG)) &\cong \Hom(\Av_K((q_J)^* \cF), \Av_K(\cH)).
\end{align*}
Moreover, under these identifications our morphism is induced by $\Av_K$. Taking into account the known case $J=\varnothing$, it therefore suffices to prove that the isomorphism
\begin{equation}
\label{eqn:Hom-adjunction}
 \Hom(\cF,\cG) \simto \Hom((q_J)^* \cF, \cH)
\end{equation}
identifies the subspace $V_1$ of the left-hand side consisting of morphisms factoring through an object of $\langle \cE^J_w : w \in W^J \smallsetminus \KW^J \rangle_{\oplus,\Z}$ with the subspace $V_2$ of the right-hand side consisting of morphisms factoring through an object of $\langle \cE_w : w \in W \smallsetminus \KW \rangle_{\oplus,\Z}$.

From Lemma~\ref{lem:pullback-parity} and~\eqref{eqn:KWJ} it is clear that~\eqref{eqn:Hom-adjunction} maps $V_1$ into $V_2$. On the other hand, if $w \notin \KW$ then $\cE_w$ is annihilated by $\Av_K$, so the same is true for $(q_J)^* (q_J)_* \cE_w$. Hence $(q_J)_* \cE_w$ cannot admit as direct summands objects of the form $\cE_x^J[n]$ with $x \in \KW^J$; in other words this object belongs to $\langle \cE^J_w : w \in W^J \smallsetminus \KW^J \rangle_{\oplus,\Z}$.  It follows that the inverse map of~\eqref{eqn:Hom-adjunction} sends $V_2$ into $V_1$, as desired.
\qed
\end{proof}

\begin{rmkA}{\rm
It follows in particular from Proposition~\ref{prop:equivalence-parity} that the functor $\Av_K^J$ sends indecomposable parity complexes to indecomposable parity complexes.}
\end{rmkA}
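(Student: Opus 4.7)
The plan is to deduce the remark from Proposition~\ref{prop:equivalence-parity} using the Krull--Schmidt property of categories of parity complexes. Both $\Par_{\Whit,\varnothing}(\Flag^J,\bk)$ and $\Par_{\Whit,K}(\Flag^J,\bk)$ are Krull--Schmidt, and by the comments following Proposition~\ref{prop:pushforward-qJ}, the indecomposable objects of the first (up to isomorphism) are exactly the $\cE^J_w[n]$ for $(w,n) \in W^J \times \Z$. I will therefore reduce to showing that, for $w \in \KW^J$ and $n \in \Z$, the object $\Av_K^J(\cE^J_w[n])$ is indecomposable in $\Par_{\Whit,K}(\Flag^J,\bk)$; the remaining case $w \in W^J \smallsetminus \KW^J$ is handled directly by the vanishing statement in Proposition~\ref{prop:equivalence-parity}, and lies outside the scope of the assertion.

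Write $\mathcal{B} := \langle \cE^J_{w'} : w' \in W^J \smallsetminus \KW^J \rangle_{\oplus,\Z}$ and $\mathcal{Q}$ for the quotient $\Par_{\Whit,\varnothing}(\Flag^J,\bk) \sslash \mathcal{B}$. The main step is to verify that, for $w \in \KW^J$, the object $\cE^J_w[n]$ remains indecomposable in $\mathcal{Q}$. Its endomorphism ring in $\mathcal{Q}$ is the quotient of the local ring $\End(\cE^J_w[n])$ by the two-sided ideal of endomorphisms factoring through an object of $\mathcal{B}$; any such quotient is either zero or again local. It is nonzero because $\id_{\cE^J_w[n]}$ cannot factor through an object of $\mathcal{B}$: such a factorization would realize $\cE^J_w[n]$ as a direct summand of an object of $\mathcal{B}$, and then the Krull--Schmidt uniqueness of indecomposable decompositions in $\Par_{\Whit,\varnothing}(\Flag^J,\bk)$ would force $\cE^J_w[n] \cong \cE^J_{w'}[m]$ for some $w' \notin \KW^J$, contradicting $w \in \KW^J$.

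To conclude, any equivalence of additive categories preserves the class of objects with local endomorphism ring, hence preserves indecomposability. Applying this to the equivalence of Proposition~\ref{prop:equivalence-parity}, whose underlying functor is induced by $\Av_K^J$, I obtain that $\Av_K^J(\cE^J_w[n])$ is indecomposable in $\Par_{\Whit,K}(\Flag^J,\bk)$ for every $w \in \KW^J$ and $n \in \Z$. The entire argument is routine bookkeeping in Krull--Schmidt categories; no new geometric input is required beyond the equivalence itself, which is already granted. If anything resembles an obstacle, it is just the local-ring observation in the middle paragraph, but this is a standard consequence of Krull--Schmidt uniqueness.
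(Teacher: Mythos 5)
Your argument is correct and is essentially the argument the paper leaves implicit: the naive quotient functor preserves indecomposability of the objects $\cE^J_w[n]$ with $w \in \KW^J$ (their endomorphism rings are local, and the quotient ring is nonzero by Krull--Schmidt), and the equivalence of Proposition~\ref{prop:equivalence-parity}, through which $\Av_K^J$ factors, preserves indecomposability. Nothing further is needed.
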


\subsection{Mixed derived category}
\label{ss:appendix-Dmix}

Following~\cite{modrap2}, we define the ``mixed derived category''
\[
 \Dmix_{\Whit,K}(\Flag^J,\bk) := \Kb \Par_{\Whit,K}(\Flag^J,\bk).
\]
As in~\cite{modrap2} the autoequivalence induced by the cohomological shift in the category $\Par_{\Whit,K}(\Flag^J,\bk)$ will be denoted by $\{1\}$, and the cohomological shift (of complexes of objects of $\Par_{\Whit,K}(\Flag^J,\bk)$) will be denoted by $[1]$. This category also admits a ``Tate twist'' autoequivalence $\langle 1 \rangle$ defined as $\{-1\}[1]$.

The ``recollement'' formalism constructed in~\cite{modrap2} applies in this setting (see also~\cite[\S 6.2]{amrw}), and we have ``mixed'' standard and costandard objects
\[
 \KD^{J,\mix}_w \quad \text{and} \quad \KN^{J,\mix}_w.
\]
The category $\Dmix_{\Whit,K}(\Flag^J,\bk)$ also admits a natural ``perverse'' t-structure, which can be characterized in terms of these objects as in~\cite[Proposition~3.4]{modrap2}: we have
\begin{align*}
 {}^p \hspace{-1pt} \Dmix_{\Whit,K}(\Flag^J,\bk)^{\leq 0} &= \langle \KD_w^{J,\mix} \langle n \rangle [m] : w \in \KW^J, \, n \in \Z, \, m \in \Z_{\geq 0} \rangle_{\mathrm{ext}},\\
 {}^p \hspace{-1pt} \Dmix_{\Whit,K}(\Flag^J,\bk)^{\geq 0} &= \langle \KN_w^{J,\mix} \langle n \rangle [m] : w \in \KW^J, \, n \in \Z, \, m \in \Z_{\leq 0} \rangle_{\mathrm{ext}},
\end{align*}
where $\langle - \rangle_{\mathrm{ext}}$ means the subcategory generated under extensions by the given objects.

The functors $(q_J)_*$, $(q_J)^*$, $(q_J)^!$ and $\Av_K^J$ send parity complexes to parity complexes by Proposition~\ref{prop:pushforward-qJ}, Proposition~\ref{prop:pullback-qJ} and Lemma~\ref{lem:Av-parity} respectively. Therefore they induce functors between the corresponding mixed derived categories, which will be denoted by the same symbol.

We can now prove the ``mixed analogue'' of Lemma~\ref{lem:pushforward-D-N-Whit}.

\begin{lemA}\phantomsection
\label{lem:pushforward-D-N-Whit-mix}
 Let $w \in \KW$, and write $w=w^J w_J$ with $w^J \in W^J$ and $w_J \in W_J$. (Then $w^J$ automatically belongs to $\KW$.)
 \begin{enumerate}
  \item[\rm (1)]
  If $w^J \notin \KW^J$, then we have
  \[
   (q_J)_* \KD^{\mix}_w = 0 = (q_J)_* \KN^{\mix}_w.
  \]
  \item[\rm (2)]
  If $w^J \in \KW^J$, then we have
  \[
   (q_J)_* \KD^{\mix}_w \cong \KD^{J,\mix}_{w^J} \{ -\ell(w_J) \}, \quad (q_J)_* \KN^{\mix}_w \cong \KN^{J,\mix}_{w^J} \{ \ell(w_J) \}.
  \]
 \end{enumerate}
\end{lemA}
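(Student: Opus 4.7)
The plan is to prove Lemma~\ref{lem:pushforward-D-N-Whit-mix} by mirroring the proof of Lemma~\ref{lem:pushforward-D-N-Whit} at the mixed level, using the recollement formalism of~\cite{modrap2} to reduce to stratum-by-stratum computations. The starting observation is that $(q_J)_*$ is induced by an additive functor $\Par_{\Whit,K}(\Flag,\bk) \to \Par_{\Whit,K}(\Flag^J,\bk)$ by Proposition~\ref{prop:pushforward-qJ}, hence extends to a triangulated functor on the mixed derived categories which is computed by applying $(q_J)_*$ termwise to complexes of parity complexes, and which in particular commutes with $\{1\}$ and $\langle 1 \rangle$.

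The key step is to characterize $\KD^{J,\mix}_v$ and $\KN^{J,\mix}_v$ (for $v \in \KW^J$) via the recollement: each is the unique (up to isomorphism) object in $\Dmix_{\Whit,K}(\Flag^J,\bk)$ supported on $\overline{{}^K\Flag^J_v}$ whose $*$-restriction, resp.~$!$-restriction, to ${}^K\Flag^J_v$ is a perversely shifted Whittaker-equivariant constant object, and whose $!$-restriction, resp.~$*$-restriction, to every other Whittaker stratum vanishes. To prove part~(2), I would therefore compute the mixed $*$- and $!$-restrictions of $(q_J)_* \KD^{\mix}_w$ and $(q_J)_* \KN^{\mix}_w$ to each Whittaker stratum of $\Flag^J$. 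Using mixed proper base change applied termwise to parity complexes and the geometric observation that the restriction of $q_J$ to ${}^K\Flag_w$ is a trivial $\mathbb{A}^{\ell(w_J)}$-fibration onto ${}^K\Flag^J_{w^J}$ (while the images of the remaining strata ${}^K\Flag_{w^J z}$ for $z \in W_J \smallsetminus \{w_J\}$ either coincide with ${}^K\Flag^J_{w^J}$ but do not contribute to the relevant standard, or land elsewhere), these restrictions reduce to the non-mixed computations already carried out in the proof of Lemma~\ref{lem:pushforward-D-N-Whit}, yielding the Tate-twist shifts $\{-\ell(w_J)\}$ for standards and $\{\ell(w_J)\}$ for costandards.

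Part~(1) is then similar: if $w^J \notin \KW^J$, no stratum of $\Flag^J$ meeting the image of ${}^K\Flag_w$ supports a nonzero Whittaker object, so all mixed stratum restrictions of $(q_J)_* \KD^{\mix}_w$ and $(q_J)_* \KN^{\mix}_w$ vanish, forcing the objects themselves to be zero. The main technical obstacle is verifying that the mixed recollement restriction functors to Whittaker strata are compatible with termwise application of $(q_J)_*$ on parity complexes; this amounts to a form of proper base change in the mixed setting of~\cite{modrap2}, which should follow from the parity-complex construction of the mixed functors together with the non-mixed base change on each individual term. Once this compatibility is in hand, the combinatorics of shifts and Tate twists is identical to that in Lemma~\ref{lem:pushforward-D-N-Whit}, and the uniqueness built into the recollement characterization of $\KD^{J,\mix}_{w^J}$ and $\KN^{J,\mix}_{w^J}$ then delivers the required isomorphisms.
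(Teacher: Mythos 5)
Your overall strategy---reducing to the non-mixed Lemma~\ref{lem:pushforward-D-N-Whit} stratum by stratum via the recollement---is the same one the paper follows, but the step you yourself flag as ``the main technical obstacle'' is exactly where your justification breaks down. The mixed recollement functors your argument needs (the $*$- and $!$-restrictions to the closed complement of a stratum, or equivalently the extension functors $j_!$, $j_*$ from a stratum) are \emph{not} computed by termwise application of the corresponding non-mixed functors to a complex of parity objects: for a closed union of strata $i$, the complexes $i^*\mathcal{E}$ and $i^!\mathcal{E}$ of a parity complex $\mathcal{E}$ are in general not parity (only the $*$-, resp.\ $!$-, restrictions to strata remain under control), and likewise $j_!$ of a parity complex is not parity. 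These functors are constructed abstractly in~\cite{modrap2}, so ``non-mixed base change on each individual term'' is simply not available, and your ``mixed proper base change'' is precisely the statement that has to be proved rather than a consequence of the construction. The paper closes this gap differently: it formulates the needed compatibility for $(q_J)_*$ against the $!$- and $*$-\emph{pushforwards} from a stratum, and then---since the termwise-defined functors ($(q_J)_*$, $(q_J)^*$, $(q_J)^!$, closed pushforwards, open restrictions) satisfy the usual adjunctions---transfers the problem by adjunction to a compatibility of \emph{pullback} functors, which is immediate from the Whittaker analogue of~\cite[Remark~2.7]{modrap2}. Once that compatibility is in place, the computation is literally the one of Lemma~\ref{lem:pushforward-D-N-Whit}, with the Tate twists you indicate.

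A secondary but genuine slip: your recollement characterization of the standard and costandard objects has the vanishing conditions swapped. Since $i^*j_!=0$ and $i^!j_*=0$, the $!$-extension $\KD^{J,\mix}_v$ is characterized by the vanishing of its $*$-restriction to the other strata, while the $*$-extension $\KN^{J,\mix}_v$ is characterized by the vanishing of its $!$-restriction there; as written, your conditions characterize the opposite objects. This is easy to fix, but it matters because it determines which base-change isomorphism you would need at each stratum when running your stratum-by-stratum check.
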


\begin{proof}
 The proof is identical to that of Lemma~\ref{lem:pushforward-D-N-Whit}, once we have proved the appropriate compatibility statement, in mixed derived categories, for the functor $(q_J)_*$ and the $*$- or $!$-pushforward functor under the embedding of a statum in $\Flag$ and $\Flag^J$. However, by adjunction it suffices to prove a similar statement for pullback functors. In turn, this property is clear from the Whittaker analogue of~\cite[Remark~2.7]{modrap2}.
 \qed
\end{proof}

\subsection{Standard and costandard objects}

The goal of this subsection is to prove the following claim.

\begin{propA}
\label{prop:D-N-perverse-Whit}
For any $w \in \KW^J$,
 the objects $\KD^{J,\mix}_w$ and $\KN^{J,\mix}_w$ belong to the heart of the perverse t-structure.
\end{propA}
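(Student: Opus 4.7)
My plan is to reduce the claim to the (already essentially known) Bruhat-constructible case $K = \varnothing$, using the averaging functor $\Av^J_K$ from~\S\hyperref[ss:appendix-averaging]{A.C} as the bridge between the two settings. In the Bruhat case, the standard and costandard mixed objects $\Delta^{J,\mix}_w$ and $\nabla^{J,\mix}_w$ (for $w \in W^J$) are perverse by the same recollement-plus-parity arguments that prove the analogous claim in \cite[Section~3]{modrap2}, adapted to partial Kac--Moody flag varieties via the material of~\S\hyperref[ss:Whittaker-appendix]{A.B}. I will take this case as the starting point.

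The central step is to establish the mixed-level identifications
\[
\Av^J_K(\Delta^{J,\mix}_w) \cong \begin{cases} \KD^{J,\mix}_w & \text{if } w \in \KW^J, \\ 0 & \text{otherwise,} \end{cases}
\qquad
\Av^J_K(\nabla^{J,\mix}_w) \cong \begin{cases} \KN^{J,\mix}_w & \text{if } w \in \KW^J, \\ 0 & \text{otherwise.} \end{cases}
\]
The non-mixed counterparts are an elementary computation: averaging against the Whittaker character annihilates sheaves supported on strata not in $\KW^J$ and, on the remaining strata, sends the constant sheaf to the Whittaker local system. To lift this to the setting of $\Kb\Par$, I would use Proposition~\ref{prop:equivalence-parity} termwise on the complexes of parity complexes representing $\Delta^{J,\mix}_w$ and $\nabla^{J,\mix}_w$, then identify the resulting complex with the Whittaker mixed standard (resp.~costandard) by checking its defining characterization through the recollement structure, proceeding by induction on the closure order of the support.

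Granted these identifications, the proof concludes formally. The standard objects $\Delta^{J,\mix}_w \langle n \rangle[m]$ (with $m \geq 0$) generate ${}^p\Dmix_{\Whit,\varnothing}(\Flag^J,\bk)^{\leq 0}$ under extensions, and each is sent by $\Av^J_K$ to an object in ${}^p\Dmix_{\Whit,K}(\Flag^J,\bk)^{\leq 0}$ (either zero or a shift/twist of a Whittaker standard). Similarly on the $\geq 0$ side. Thus $\Av^J_K$ is t-exact for the perverse t-structures, and applying t-exactness to the perverse objects $\Delta^{J,\mix}_w$ and $\nabla^{J,\mix}_w$ for $w \in \KW^J$ yields the sought perversity of $\KD^{J,\mix}_w$ and $\KN^{J,\mix}_w$.

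The main obstacle will be the mixed identification of $\Av^J_K$ on the standard and costandard objects: while the non-mixed statement is immediate, tracking the identification at the level of $\Kb\Par$ requires compatibility of $\Av^J_K$ with the differentials in the complexes of parity complexes representing $\Delta^{J,\mix}_w$ and $\nabla^{J,\mix}_w$. This can be handled by induction on the support, starting from the base case of an object supported on a single stratum (where the mixed standard or costandard is a single parity complex and Proposition~\ref{prop:equivalence-parity} applies directly) and then combining with the distinguished triangles provided by the recollement; alternatively, one can appeal to the general principle that such identifications of mixed objects are determined by their behaviour stratum by stratum.
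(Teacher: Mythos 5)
Your overall architecture is the same as the paper's: start from the case $K=\varnothing$, compute $\Av^J_K$ on the mixed standard and costandard objects, deduce that $\Av^J_K$ is t-exact for the perverse t-structures, and apply this to the perverse objects $\Delta^{J,\mix}_w$, $\nabla^{J,\mix}_w$ to obtain perversity of $\KD^{J,\mix}_w$ and $\KN^{J,\mix}_w$. However, the step you yourself flag as the main obstacle --- the mixed-level identification of $\Av^J_K$ on standards and costandards --- is precisely where the paper does genuine work, and your proposed way of handling it does not constitute a proof. The objects $\Delta^{J,\mix}_w$, $\nabla^{J,\mix}_w$ live in $\Kb\Par_{\Whit,\varnothing}(\Flag^J,\bk)$; applying Proposition~\ref{prop:equivalence-parity} ``termwise'' only tells you that $\Av^J_K$ of the representing complex is some complex of Whittaker parity objects, and there is no ``general principle that such identifications of mixed objects are determined by their behaviour stratum by stratum'': identifying the result with $\KD^{J,\mix}_w$ requires a compatibility of the averaging functor with the mixed recollement (or with the convolution description of standard objects), which is not formal in this $\Kb\Par$ framework and is exactly the content one must supply. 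The paper supplies it as Lemma~\ref{lem:average-mix-K}, deduced from: (i) the full-flag-variety case Lemma~\ref{lem:average-mix}, whose $w_K=\mathrm{id}$ case is \cite[Lemma~6.1]{amrw} and whose general case uses the morphism $\Delta^{\mix}_{\mathrm{id}}\langle-\ell(w_K)\rangle\to\Delta^{\mix}_{w_K}$ of \cite[Lemma~4.9]{modrap2}, convolution on the right with $\Delta^{\mix}_{w^K}$, and the fact that the cone is killed by $\Av_K$ (as in \cite[Lemma~4.4.6]{by}); and (ii) the isomorphism $(q_J)_*\circ\Av_K\cong\Av_K^J\circ(q_J)_*$ together with Lemma~\ref{lem:pushforward-D-N-Whit-mix}. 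Your sketch omits all of these inputs, so the key lemma remains unproved in your argument.

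Two smaller points. First, your displayed dichotomy is incorrect as stated: for $w\in W^J\smallsetminus\KW^J$ the object $\Av^J_K(\Delta^{J,\mix}_w)$ vanishes only when $W_Kw\cap\KW^J=\varnothing$; otherwise, writing $w=w_Kw^K$ with $w_K\in W_K$ and $w^K\in\KW$, it is $\KD^{J,\mix}_{w^K}\langle-\ell(w_K)\rangle$, which is nonzero. (Your concluding t-exactness argument survives this correction, since a Tate twist of a standard object still lies in the correct half of the t-structure, but the identification you plan to prove is not the right one.) Second, the base case $K=\varnothing$ is not quite a free citation: the proof of \cite[Theorem~4.7]{modrap2} appears to use smoothness of $q_J$, which is not known here, and the paper re-proves this case via the convolution $\Delta^{\mix}_w\star^{\BKM}\underline{\bk}_{\overline{\Flag_{w_0^J}}}\{\ell(w_0^J)\}$ and \cite[Proposition~4.6(2)]{modrap2}; your appeal to ``the same recollement-plus-parity arguments'' glosses over this point.
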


For the proof of this claim we need some preliminary lemmas.

\begin{lemA}
\label{lem:average-mix}
 Let $w \in W$, and write $w=w_K w^K$ with $w_K \in W_K$ and $w^K \in \KW$. Then we have
 \[
  \Av_K(\Delta^{\mix}_w) \cong \KD^{\mix}_{w^K} \langle -\ell(w_K) \rangle, \quad \Av_K(\nabla^{\mix}_w) \cong \KN^{\mix}_{w^K} \langle \ell(w_K) \rangle.
 \]
\end{lemA}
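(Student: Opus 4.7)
The plan is to prove both isomorphisms by induction on $\ell(w_K)$, using the convolution structure on the standard and costandard objects in the mixed derived category and the interaction between the averaging functor and convolution by objects of the form $\Delta^{\mix}_s$, $\nabla^{\mix}_s$ for a simple reflection $s \in K$.

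For the base case $w_K = e$, we have $w = w^K \in \KW$, and the claim reduces to $\Av_K(\Delta^{\mix}_w) \cong \KD^{\mix}_w$ and $\Av_K(\nabla^{\mix}_w) \cong \KN^{\mix}_w$. This case can be handled by restricting to the open stratum $\Flag_w$ of the support of $\Delta^{\mix}_w$ (resp.~$\nabla^{\mix}_w$), where the averaging functor transforms the perversely shifted constant local system into the perversely shifted Whittaker local system on ${}^K\hspace{-1.5pt}\Flag_w$ (which is nonzero precisely because $w \in \KW$), followed by $!$-extension (resp.~$*$-extension). The compatibility of $\Av_K$ with $!$- and $*$-extensions of strata (which follows from base change and the fact that $\Av_{K,!} \cong \Av_{K,*}$ established earlier in~\S A.C) yields the claim.

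For the inductive step, choose a simple reflection $s \in K$ with $\ell(sw_K) = \ell(w_K) - 1$. Setting $w' := sw$, we have $w' = (sw_K) w^K$ with $\ell(w') = \ell(w) - 1$, so $(w')_K = sw_K$ and $(w')^K = w^K$. The convolution identity $\Delta^{\mix}_w \cong \Delta^{\mix}_s \star \Delta^{\mix}_{w'}$ (valid because lengths add) reduces the problem to the following key lemma: for any simple reflection $s \in K$ and any $\mathcal{F}$ in $\Dmix_{\Whit,\varnothing}(\Flag,\bk)$,
\[
\Av_K(\Delta^{\mix}_s \star \mathcal{F}) \cong \Av_K(\mathcal{F})\langle -1 \rangle, \qquad \Av_K(\nabla^{\mix}_s \star \mathcal{F}) \cong \Av_K(\mathcal{F})\langle 1 \rangle.
\]
Assuming this lemma, the inductive hypothesis applied to $w'$ gives $\Av_K(\Delta^{\mix}_{w'}) \cong \KD^{\mix}_{w^K}\langle -\ell(w_K)+1 \rangle$, and combining with the shift by $\langle -1 \rangle$ yields the desired formula for $\Delta^{\mix}_w$; the case of $\nabla^{\mix}_w$ is symmetric.

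The hard part is establishing the key lemma, and it is here that the Whittaker condition on $\UKM_K^\#$ enters decisively. One approach is to factor $\Av_K$ as a composition of ``partial averaging'' functors indexed by the simple reflections of $K$ (built from the root subgroup for $s \in K$ twisted by the pullback of $\AS$), and to verify the shift rank-by-rank: the computation reduces to the subquotient $\overline{\Flag_s} \cong \mathbb{P}^1$ (more precisely, to the minimal parabolic flag variety $\GKM/\PKM_s$), where one checks directly that averaging against the $(U_s, \chi_K|_{U_s})$-equivariant structure sends $\Delta^{\mix}_s$ to $\Delta^{\mix}_e\langle -1 \rangle$ and $\Delta^{\mix}_e$ to itself, from which the full statement follows by the projection formula for the quotient morphism $\Flag \to \GKM/\PKM_s$. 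An alternative is to reinterpret the key lemma through the monoidal action of the Hecke-type category on $\Dmix_{\Whit,K}$ and invoke the analogous shift identity established in~\cite[\S11]{rw} and~\cite[\S7]{amrw} in the case where $K$ corresponds to the finite Weyl group; the same proof adapts verbatim to general $K$ since only the rank-one picture for $s \in K$ is used.
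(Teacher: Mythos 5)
Your strategy is workable and rests, at bottom, on the same mechanism as the paper's proof, but it is organized quite differently. The paper does not induct on $\ell(w_K)$: it quotes~\cite[Lemma~6.1]{amrw} for the case $w_K=\id$, and for general $w_K$ it invokes~\cite[Lemma~4.9]{modrap2}, which provides in a single step a morphism $\Delta^{\mix}_{\id}\langle-\ell(w_K)\rangle\to\Delta^{\mix}_{w_K}$ whose cone is an extension of objects coming from $\PKM_L$-equivariant categories with $\varnothing\neq L\subset K$; convolving on the right with the (equivariant) object $\Delta^{\mix}_{w^K}$ (lengths add, so~\cite[Proposition~4.4(1)]{modrap2} applies) and using that $\Av_K$ kills such parabolic-equivariant objects (cf.~\cite[Lemma~4.4.6]{by}) finishes the proof. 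Your induction is essentially an unwinding of this: for $s\in K$ the cone of $\Delta^{\mix}_{\id}\langle-1\rangle\to\Delta^{\mix}_s$ is the parity object $\cE^{\mix}_s$ (the shifted constant sheaf on $\overline{\Flag_s}\cong\mathbb{P}^1$), which is pulled back along $q_s$ and hence killed by $\Av_K$ because the character is nontrivial on the root subgroup attached to $s$. Two points deserve repair. First, your ``key lemma'' should be stated with the right-hand factor in the equivariant mixed category, using $\star^{\BKM}$ as in~\cite[\S 4.3]{modrap2}; then $\Av_K(\Delta^{\mix}_s\star^{\BKM}\mathcal{F})\cong\Av_K(\Delta^{\mix}_s)\star^{\BKM}\mathcal{F}$ and all you need is the rank-one computation $\Av_K(\Delta^{\mix}_s)\cong\KD^{\mix}_{\id}\langle-1\rangle$, obtained from the triangle above; the proposed ``factorization of $\Av_K$ into partial averagings'' is not obviously well defined and is unnecessary. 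Second, your base case is exactly~\cite[Lemma~6.1]{amrw}, and your sketch argues as if sheaf-theoretic base change along the recollement were available; in the mixed category $\Kb\Par_{\Whit,K}(\Flag,\bk)$ the functor $\Av_K$ is only defined through parity complexes, so the compatibility of $\Av_K$ with the $!$- and $*$-extension functors has to be established (or the result simply cited) rather than asserted. With these adjustments your argument goes through; what it buys is a more self-contained, stepwise proof, at the cost of redoing work already packaged in~\cite[Lemma~4.9]{modrap2} and~\cite[Lemma~6.1]{amrw}.
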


\begin{proof}
 In the case $w_K=\id$, these isomorphisms are proved in~\cite[Lemma~6.1]{amrw}. We deduce the general case as follows. We will only give the details for the first isomorphism; the second one can be treated similarly. Recall from~\cite[Lemma~4.9]{modrap2} that there exists a morphism
 \[
  \Delta^{\mix}_\id \langle -\ell(w_K) \rangle \to \Delta^\mix_{w_K}
 \]
whose cone is an extension (in the sense of triangulated categories) of objects which belong to the essential image of the forgetful functors from some equivariant mixed derived categories for some parabolic subgroups of the form $\PKM_L$ with $\varnothing \neq L \subset K$. Convolving with $\Delta^{\mix}_{w^K}$ on the right and using~\cite[Proposition~4.4(1)]{modrap2} we deduce a morphism
\[
  \Delta^{\mix}_{w^K} \langle -\ell(w_K) \rangle \to \Delta^\mix_{w}
 \]
 whose cone satisfies a similar property. It is easily seen that this cone is killed by $\Av_K$; see e.g.~\cite[Lemma~4.4.6]{by} for similar considerations. Therefore we obtain an isomorphism
 \[
  \Av_K(\Delta^{\mix}_{w^K}) \langle -\ell(w_K) \rangle \simto \Av_K(\Delta^\mix_{w}),
 \]
 which concludes the proof.
 \qed
\end{proof}

\begin{lemA}
\label{lem:average-mix-K}
 Let $w \in W^J$. If $W_K w \cap \KW^J=\varnothing$, then
 \[
  \Av_K^J(\Delta^{J,\mix}_w)=0 \quad \text{and} \quad \Av_K^J(\nabla^{J,\mix}_w)=0.
 \]
Otherwise, write $w=w_K w^K$ with $w_K \in W_K$ and $w^K \in \KW$ (so that $w^K$ belongs to $\KW^J$). Then we have
 \[
  \Av_K^J(\Delta^{J,\mix}_w) \cong \KD^{J,\mix}_{w^K} \langle -\ell(w_K) \rangle, \quad \Av_K^J(\nabla^{J,\mix}_w) \cong \KN^{J,\mix}_{w^K} \langle \ell(w_K) \rangle.
 \]
\end{lemA}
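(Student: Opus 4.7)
My plan is to reduce the statement to the full-flag case (Lemma~\ref{lem:average-mix}) by pushing forward along $q_J$. Two compatibilities are needed. First, for $w \in W^J$, the mixed version Lemma~\ref{lem:pushforward-D-N-Whit-mix}, specialized to $K = \varnothing$ (so $\KW = W$ and the decomposition $w = w^J w_J$ reduces to $w_J = \mathrm{id}$), gives canonical isomorphisms $(q_J)_* \Delta^{\mix}_w \cong \Delta^{J,\mix}_w$ and $(q_J)_* \nabla^{\mix}_w \cong \nabla^{J,\mix}_w$. Second, one needs a natural isomorphism of functors $\Av_K^J \circ (q_J)_* \cong (q_J)_* \circ \Av_K$ at the level of mixed derived categories. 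Granting these, I would chain the isomorphisms
\[
\Av_K^J(\Delta^{J,\mix}_w) \cong \Av_K^J (q_J)_* \Delta^{\mix}_w \cong (q_J)_* \Av_K \Delta^{\mix}_w \cong (q_J)_* \KD^{\mix}_{w^K} \langle -\ell(w_K) \rangle,
\]
the final step coming from Lemma~\ref{lem:average-mix} applied to $w = w_K w^K$; the analogous chain for $\nabla^{J,\mix}_w$ ends with $(q_J)_* \KN^{\mix}_{w^K} \langle \ell(w_K) \rangle$.

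The key combinatorial observation, which I would establish first, is that for $w \in W^J$ one also has $w^K \in W^J$: if $w^K s < w^K$ for some simple reflection $s \in J$, then $ws = w_K w^K s$ satisfies $\ell(ws) \le \ell(w_K) + \ell(w^K s) < \ell(w)$, contradicting $w \in W^J$. Thus in the $(W^J, W_J)$-decomposition of $w^K$ one has $(w^K)^J = w^K$ and $(w^K)_J = \mathrm{id}$. Applying Lemma~\ref{lem:pushforward-D-N-Whit-mix} to $\KD^{\mix}_{w^K}$, the pushforward $(q_J)_* \KD^{\mix}_{w^K}$ vanishes when $w^K \notin \KW^J$ and equals $\KD^{J,\mix}_{w^K}$ otherwise (and similarly for $\KN^{\mix}_{w^K}$). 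Finally, since $w^K$ is the unique element of $W_K w$ lying in $\KW$ and $\KW^J \subset \KW$, the condition $W_K w \cap \KW^J = \varnothing$ is equivalent to $w^K \notin \KW^J$; combining this with the displayed chain matches the two cases in the statement.

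The step I expect to be the main obstacle is verifying the natural isomorphism $\Av_K^J \circ (q_J)_* \cong (q_J)_* \circ \Av_K$. At the level of the ordinary Whittaker derived categories it should follow from a standard base-change argument, since $\Av_K$ is defined via pull-push along the correspondence $\Flag \xleftarrow{\,p\,} (\UKM^K \cdot \UKM_K^\#) \times \Flag \xrightarrow{\,a\,} \Flag$ twisted by $\chi_K^* \AS$, and the $\GKM$-equivariance of $q_J$ makes this correspondence compatible with $q_J$. To transfer the isomorphism to the mixed categories, one applies it termwise on complexes of parity complexes, using that both $\Av_K^J$ and $(q_J)_*$ preserve parity by Lemma~\ref{lem:Av-parity} and Proposition~\ref{prop:pushforward-qJ} respectively; this passage, while routine, is the only non-combinatorial ingredient and is where I would spend most of the care.
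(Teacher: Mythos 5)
Your proposal is correct and follows essentially the same route as the paper's proof: identify $\Delta^{J,\mix}_w \cong (q_J)_*\Delta^{\mix}_w$ via Lemma~\ref{lem:pushforward-D-N-Whit-mix} with $K=\varnothing$, commute $\Av_K^J$ past $(q_J)_*$, apply Lemma~\ref{lem:average-mix}, and then push $\KD^{\mix}_{w^K}$ down again with Lemma~\ref{lem:pushforward-D-N-Whit-mix} for the given $K$, using that $w^K$ automatically lies in $W^J$ and that $W_K w \cap \KW^J = \varnothing$ is equivalent to $w^K \notin \KW^J$. The only difference is that you spell out details the paper leaves implicit (the combinatorial fact $w^K \in W^J$ and the verification of $\Av_K^J \circ (q_J)_* \cong (q_J)_* \circ \Av_K$, which the paper simply asserts holds "as in the non-mixed setting"), and these details are correct.
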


\begin{proof}
 As in the non-mixed setting, the functor $(q_J)_*$ commutes with averaging functors. Hence using Lemma~\ref{lem:pushforward-D-N-Whit-mix} (in case $K=\varnothing$) we have
 \[
  \Av_K^J(\Delta^{J,\mix}_w) \cong \Av_K^J \circ (q_J)_*(\Delta^{\mix}_w) \cong (q_J)_* \circ \Av_K(\Delta^{\mix}_w).
 \]
Now we write $w=w_K w^K$ with $w_K \in W_K$ and $w^K \in \KW$. Applying Lemma~\ref{lem:average-mix} we deduce that
\[
 \Av_K^J(\Delta^{J,\mix}_w) \cong (q_J)_* ( \KD^{\mix}_{w^K} \langle -\ell(w_K) \rangle).
\]
Here $w^K$ automatically belongs to $W^J$. If $W_K w \cap \KW^J=\varnothing$ then $w^K \notin \KW^J$, so that $(q_J)_* ( \KD^{\mix}_{w^K})=0$ by Lemma~\ref{lem:pushforward-D-N-Whit-mix} (applied now with our choice of $K$). Otherwise we have $w^K \in \KW^J$, and $(q_J)_* ( \KD^{\mix}_{w^K})=\KD^{J,\mix}_{w^K}$ again by Lemma~\ref{lem:pushforward-D-N-Whit-mix}. The claims for costandard objects can be proved in a similar way.
 \qed
\end{proof}

\begin{proof}[Proof of Proposition~{\rm \ref{prop:D-N-perverse-Whit}}]
The special case $K=\varnothing$ is treated in~\cite[Theorem~4.7]{modrap2}. As written in~\cite{modrap2} this proof seems to use the fact that $q_J$ is smooth; to convince the reader that in fact this property does not play any crucial role, let us recall its main steps for the objects $\Delta^{J,\mix}_w$ (the other case is similar). This proof uses the $\BKM$-equivariant mixed derived category $\Dmix_\BKM(\Flag,\bk)$ defined as above but starting from the equivariant parity complexes; see~\cite[\S 3.5]{modrap2}. As above we let $w_0^J$ be the longest element in $W_J$; then $\overline{\Flag_{w_0^J}}$ is a smooth closed subvariety of $\Flag$, so that the shifted constant sheaf $\underline{\bk}_{\overline{\Flag_{w_0^J}}} \{\ell(w_0^J)\}$ defines an object of $\Dmix_\BKM(\Flag,\bk)$; see~\cite[Lemma~3.7]{modrap2}. We remark that
\[
 (q_J)^! \Delta^{J,\mix}_w \{-\ell(w_0^J)\} \cong (q_J)^! (q_J)_* \Delta^{\mix}_w \{-\ell(w_0^J)\} \cong \Delta^{\mix}_w \star^{\BKM} \underline{\bk}_{\overline{\Flag_{w_0^J}}} \{\ell(w_0^J)\}
\]
where the convolution product $\star^{\BKM}$ is as in~\cite[\S 4.3]{modrap2}. Here the first isomorphism uses Lemma~\ref{lem:pushforward-D-N-Whit-mix} (in the special case $K=\varnothing$), and the second one uses~\cite[Lemma~4.3]{modrap2} (see also~\cite[Lemma~9.4.2]{rw}). By~\cite[Lemma~3.7]{modrap2}, $\underline{\bk}_{\overline{\Flag_{w_0^J}}} \{\ell(w_0^J)\}$ is perverse. Using then~\cite[Proposition~4.6(2)]{modrap2}, we deduce that $(q_J)^! \Delta^{J,\mix}_w \{-\ell(w_0^J)\}$ 
belongs to ${}^p \hspace{-1pt} \Dmix_{\Whit,\varnothing}(\Flag,\bk)^{\geq 0}$.
Considering the $!$-pullback of this object to strata $\Flag_{x w_0^J}$ with $x \in W^J$, we deduce that $\Delta^{J,\mix}_w$ 
belongs to ${}^p \hspace{-1pt} \Dmix_{\Whit,\varnothing}(\Flag^J,\bk)^{\geq 0}$.
Since, by definition of the perverse t-structure, this object also belongs to ${}^p \hspace{-1pt} \Dmix_{\Whit,\varnothing}(\Flag^J,\bk)^{\leq 0}$, we finally conclude that it is perverse.

Now, let us deduce the case of a general subset $K$ (of finite type).
 In view of the characterization of the perverse t-structure in terms of standard and costandard objects (see~\S\hyperref[ss:appendix-Dmix]{A.D}), Lemma~\ref{lem:average-mix-K} implies that the functor $\Av_J^K$ is t-exact for the perverse t-structures. Since the objects $\Delta^{J,\mix}_w$ and $\nabla^{J,\mix}_w$ are known to be perverse, we deduce that $\Av_K^J(\Delta^{J,\mix}_w) \cong \KD^{J,\mix}_{w}$ and $\Av_K^J(\nabla^{J,\mix}_w) \cong \KN^{J,\mix}_{w}$ are perverse too (where the isomorphisms follow from Lemma~\ref{lem:average-mix-K} again).
  \qed
\end{proof}

Let us also note the following property.

\begin{lemA}
\label{lem:pullback-perverse}
 For any $w \in \KW^J$ we have isomorphisms
 \begin{align*}
  (q_J)^! \left( \KD^{J,\mix}_w \{-\ell(w_0^J)\} \right) &\cong (q_J)^* \left( \KD^{J,\mix}_w \{\ell(w_0^J)\} \right), \\ 
  (q_J)^! \left( \KN^{J,\mix}_w \{-\ell(w_0^J)\} \right) &\cong (q_J)^* \left( \KN^{J,\mix}_w \{\ell(w_0^J)\} \right).
 \end{align*}
Moreover, these objects are perverse.
\end{lemA}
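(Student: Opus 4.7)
My plan is to reduce the general case to $K = \varnothing$ via the averaging functor $\Av_K^J$, and to handle the case $K = \varnothing$ by expressing $(q_J)^!\Delta^{J,\mix}_w$ and $(q_J)^*\Delta^{J,\mix}_w$ as the same convolution up to a shift, extending the argument used in the proof of Proposition~\ref{prop:D-N-perverse-Whit}.

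In the case $K = \varnothing$, Lemma~\ref{lem:pushforward-D-N-Whit-mix} gives $\Delta^{J,\mix}_w \cong (q_J)_*\Delta^{\mix}_w$ for $w \in W^J$. Base change along the Cartesian square of $q_J$, together with the identification $\Flag \times_{\Flag^J} \Flag \cong \GKM \times^{\BKM} \overline{\Flag_{w_0^J}}$ (under which the second projection becomes the convolution map), produces the two formulas
\begin{align*}
(q_J)^!(q_J)_*\Delta^{\mix}_w &\cong \Delta^{\mix}_w \star^{\BKM} \underline{\bk}_{\overline{\Flag_{w_0^J}}}\{2\ell(w_0^J)\}, \\
(q_J)^*(q_J)_*\Delta^{\mix}_w &\cong \Delta^{\mix}_w \star^{\BKM} \underline{\bk}_{\overline{\Flag_{w_0^J}}};
\end{align*}
the first is precisely the one invoked in the proof of Proposition~\ref{prop:D-N-perverse-Whit} via~\cite[Lemma~4.3]{modrap2}, and the second follows from an analogous argument for $(q_J)^*$, the difference $\{2\ell(w_0^J)\}$ reflecting that the smooth fibers of $q_J$ have pure dimension $\ell(w_0^J)$. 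Rearranging shifts yields the desired isomorphism for $\Delta^{J,\mix}_w$; the case of $\nabla^{J,\mix}_w$ is identical, using the $\nabla$-part of Lemma~\ref{lem:pushforward-D-N-Whit-mix}.

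For perversity in the case $K = \varnothing$: the proof of Proposition~\ref{prop:D-N-perverse-Whit} already shows that $(q_J)^!\Delta^{J,\mix}_w\{-\ell(w_0^J)\}$ lies in ${}^{p}\Dmix_{\Whit,\varnothing}(\Flag,\bk)^{\geq 0}$ (via~\cite[Proposition~4.6(2)]{modrap2} applied to the convolution expression). The symmetric $*$-argument applied to the corresponding convolution formula for $(q_J)^*\Delta^{J,\mix}_w\{\ell(w_0^J)\}$ yields the opposite containment, hence perversity; the costandard case is handled in the same way.

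For general $K$: since $\KW^J \subset \KW$, for $w \in \KW^J$ the decomposition $w = w_K w^K$ of Lemma~\ref{lem:average-mix-K} has $w_K = \id$, so $\Av_K^J(\Delta^{J,\mix}_w) \cong \KD^{J,\mix}_w$ and $\Av_K^J(\nabla^{J,\mix}_w) \cong \KN^{J,\mix}_w$. Since the averaging functor commutes with $(q_J)^*$ and $(q_J)^!$ (by compatibility of the averaging construction with base change along $q_J$), applying $\Av_K^J$ to the case $K = \varnothing$ gives both the desired isomorphism and, via the t-exactness of $\Av_K^J$ already used in the proof of Proposition~\ref{prop:D-N-perverse-Whit}, the perversity for general $K$. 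The main technical point requiring care is the verification of both convolution formulas in the homotopy-of-parity model of the mixed derived category; however, this closely parallels the argument already invoked in the proof of Proposition~\ref{prop:D-N-perverse-Whit}.
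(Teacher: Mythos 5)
Your route is essentially the paper's: for $K=\varnothing$ you write $\Delta^{J,\mix}_w \cong (q_J)_*\Delta^{\mix}_w$ (Lemma~\ref{lem:pushforward-D-N-Whit-mix}) and compare $(q_J)^!(q_J)_*$ with $(q_J)^*(q_J)_*$ through their common convolution description (this comparison is exactly what the paper quotes from~\cite[Lemma~9.4.2(1)]{rw}, so your shift bookkeeping is fine, provided you really prove both formulas in the $\Kb\Par$ model rather than appeal to smoothness of $q_J$, which is not known here); the reduction to general $K$ by averaging is also the paper's (small caveat: the objects of the lemma live on $\Flag$, so it is $\Av_K$ on $\Flag$, not $\Av_K^J$, that you must apply, together with $\Av_K\circ (q_J)^* \cong (q_J)^*\circ\Av_K^J$ and its $!$-analogue, plus Lemma~\ref{lem:average-mix-K} with $w_K=\id$ and the t-exactness of $\Av_K$ coming from Lemma~\ref{lem:average-mix}).

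The genuine gap is in the perversity statement. You obtain the containment in ${}^p\Dmix_{\Whit,\varnothing}(\Flag,\bk)^{\geq 0}$ exactly as in Proposition~\ref{prop:D-N-perverse-Whit}, and then assert that ``the symmetric $*$-argument applied to the corresponding convolution formula'' gives the containment in ${}^p\Dmix_{\Whit,\varnothing}(\Flag,\bk)^{\leq 0}$. There is no such symmetric statement available: the functor $(-)\star^{\BKM}\underline{\bk}_{\overline{\Flag_{w_0^J}}}\{\ell(w_0^J)\}\cong (q_J)^*(q_J)_*(-)\{\ell(w_0^J)\}$ is \emph{not} right t-exact, so the $\geq 0$ direction quoted from~\cite[Proposition~4.6(2)]{modrap2} cannot simply be ``dualized''. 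Concretely, take $J=\{s\}$ and the perverse object $\Delta^{\mix}_s$: by Lemma~\ref{lem:pushforward-D-N-Whit-mix} one gets $(q_J)^*(q_J)_*\Delta^{\mix}_s\{1\}\cong (q_J)^*\Delta^{J,\mix}_e\cong \underline{\bk}_{\overline{\Flag_s}}$, which is concentrated in perverse degree $1$. Thus the $\leq 0$ bound for $(q_J)^*\bigl(\Delta^{J,\mix}_w\{\ell(w_0^J)\}\bigr)$ is not a formal consequence of any exactness property of this convolution functor; it genuinely uses that $w\in W^J$ together with the specific twist, and must be checked directly. The missing step is short and is what the paper does: the $*$-restriction of $(q_J)^*\bigl(\Delta^{J,\mix}_w\{\ell(w_0^J)\}\bigr)$ to $\Flag_x$ vanishes unless $x\in wW_J$, and in that case equals $\underline{\bk}\{\ell(w)+\ell(w_0^J)\}$ with $\ell(x)\leq\ell(w)+\ell(w_0^J)$, whence the $\leq 0$ containment; a parallel direct check (now on $!$-restrictions) is needed for the costandard objects, where the roles of the two containments are exchanged. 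With this computation supplied, the rest of your argument goes through.
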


\begin{proof}
As usual we only treat the case of standard objects; the case of costandard objects is similar.

We begin with the case $K=\varnothing$. Here we have
\[
 \Delta^{J,\mix}_w = (q_J)_* \Delta^{\mix}_w
\]
by Lemma~\ref{lem:pushforward-D-N-Whit-mix}, so the isomorphism between our two objects follows from the comparison of the two isomorphisms in~\cite[Lemma~9.4.2(1)]{rw}. We have already observed in the course of the proof of Proposition~\ref{prop:D-N-perverse-Whit} that these objects belong to ${}^p \hspace{-1pt} \Dmix_{\Whit,\varnothing}(\Flag,\bk)^{\geq 0}$. Now 
it is clear that the $*$-pullback of $(q_J)^* \left( \Delta^{J,\mix}_w \{\ell(w_0^J)\} \right)$ to a stratum $\Flag_x$ vanishes unless $x \in w W_J$ and that in this case it is isomorphic to $\underline{\bk}\{\ell(w)+\ell(w_0^J)\}$.
Since $\ell(w)+\ell(w_0^J) \geq \ell(x)$, we deduce that this object also belongs to ${}^p \hspace{-1pt} \Dmix_{\Whit,\varnothing}(\Flag,\bk)^{\leq 0}$.

 The case of a general subset $K$ follows from the case $K=\varnothing$ by applying the functor $\Av^J_K$, as in the proofs of Lemma~\ref{lem:average-mix-K} and Proposition~\ref{prop:D-N-perverse-Whit}.
  \qed
\end{proof}

\providecommand{\bysame}{\leavevmode\hbox to3em{\hrulefill}\thinspace}
%
%

\bibliographystyle{amsalpha}
\bibliographymark{References}
\def\cprime{$'$}

\end{document}